\newtheorem{Theorem}[equation]{Theorem}
\newtheorem{Corollary}[equation]{Corollary}
\newtheorem{Lemma}[equation]{Lemma}
\theoremstyle{definition}
\newtheorem{Definition}[equation]{Definition}
\theoremstyle{remark}
\newtheorem{Remark}[equation]{Remark}
\numberwithin{equation}{section}
\DeclareMathOperator{\ev}{ev}
\DeclareMathOperator{\id}{id}
\DeclareMathOperator{\ad}{ad}
\DeclareMathOperator{\row}{row}
\DeclareMathOperator{\col}{col}
\newcommand{\ve}{\varepsilon}
\begin{document}
\title{The coproduct for the affine Yangian and the parabolic induction for non-rectangular $W$-algebras}
\author{Mamoru Ueda\thanks{mueda@ms.u-tokyo.ac.jp}}
\affil{Graduate School of Mathematical Sciences, the University of Tokyo, 3-8-1 Komaba Meguro-ku Tokyo 153-8914, Japan}
\date{}
\maketitle
\begin{abstract}
By using the coproduct and evaluation map for the affine Yangian and the Miura map for non-rectangular $W$-algebras, we construct a homomorphism from the affine Yangian associated with $\widehat{\mathfrak{sl}}(n)$ to the universal enveloping algebra of a non-rectangular $W$-algebra of type $A$, which is an affine analogue of the one given in De Sole-Kac-Valeri \cite{DKV}. As a consequence, we find that the coproduct for the affine Yangian is compatible with the parabolic induction for non-rectangular $W$-algebras via this homomorphism. It is expected that this homomorphism will be helpful for the resolution of the Crutzig-Diaconescu-Ma's conjecture. 
\end{abstract}
\textbf{keyword}: Yangian, evaluation map, $W$-algebra, coproduct

\section{Introduction}

The Yangian $Y_\hbar(\mathfrak{g})$ associated with a finite dimensional simple Lie algebra $\mathfrak{g}$ was introduced by Drinfeld (\cite{D1}, \cite{D2}). The Yangian $Y_\hbar(\mathfrak{g})$ is a quantum group which is a deformation of the current algebra $\mathfrak{g}\otimes\mathbb{C}[z]$. The affine Yangian associated with $\widehat{\mathfrak{sl}}(n)$ was first introduced by Guay (\cite{Gu2}, \cite{Gu1}).
The affine Yangian $Y_{\hbar,\ve}(\widehat{\mathfrak{sl}}(n))$ is a 2-parameter Yangian and is the deformation of the universal enveloping algebra of the central extension of $\mathfrak{sl}(n)[u^{\pm1},v]$. Guay-Nakajima-Wendlandt \cite{GNW} gave the coproduct for the Yangian associated with the Kac-Moody Lie algebra of affine type. The coproduct for the affine Yangian $Y_{\hbar,\ve}(\widehat{\mathfrak{sl}}(n))$ is a homomorphism satisfying the coassociativity:
\begin{equation*}
\Delta\colon Y_{\hbar,\ve}(\widehat{\mathfrak{sl}}(n))\to Y_{\hbar,\ve}(\widehat{\mathfrak{sl}}(n))\widehat{\otimes} Y_{\hbar,\ve}(\widehat{\mathfrak{sl}}(n)),
\end{equation*}
where $Y_{\hbar,\ve}(\widehat{\mathfrak{sl}}(n))\widehat{\otimes} Y_{\hbar,\ve}(\widehat{\mathfrak{sl}}(n))$ is the standard degreewise completion of $Y_{\hbar,\ve}(\widehat{\mathfrak{sl}}(n)){\otimes} Y_{\hbar,\ve}(\widehat{\mathfrak{sl}}(n))$. In \cite{U8} and \cite{U10}, we also constructed two kinds of a homomorphism:
\begin{align*}
\Psi_{n,m+n}&\colon Y_{\hbar,\ve}(\widehat{\mathfrak{sl}}(n))\to Y_{\hbar,\ve}(\widehat{\mathfrak{sl}}(m+n)),\\
\Psi^{m,m+n}&\colon Y_{\hbar,\ve+n\hbar}(\widehat{\mathfrak{sl}}(m))\to Y_{\hbar,\ve}(\widehat{\mathfrak{sl}}(m+n)).
\end{align*}
The images of $\Psi_{n,m+n}$ and $\Psi^{m,m+n}$ are commutative with each other and the image of $\Psi_{n,m+n}\otimes\Psi^{m,m+n}$ is the affine version of the Levi sualbgera of the finite Yangian of type $A$ given in \cite{BK0}.

Recently, the relationships between affine Yangians and $W$-algebras have been studied. The $W$-algebra $\mathcal{W}^k(\mathfrak{g},f)$ is a vertex algebra associated with a finite dimensional reductive Lie algebra $\mathfrak{g}$, a nilpotent element $f\in\mathfrak{g}$ and a complex number $k$. In \cite{U4}, we  constructed a surjective homomorphism from the affine Yangian to the universal enveloping algebra of a rectangular $W$-algebra of type $A$, which is a $W$-algebra associated with $\mathfrak{gl}(ln)$ and a nilpotent element of type $(l^n)$. Kodera and the author \cite{KU} gave another proof of the construction of this homomorphism by using the coproduct and evaluation map for the affine Yangian.
The evaluation map for the affine Yangian is a non-trivial homomorphism from the affine Yangian $Y_{\hbar,\ve}(\widehat{\mathfrak{sl}}(n))$ to the standard degreewise completion of the universal enveloping algebra of $\widehat{\mathfrak{gl}}(n)$:
\begin{equation*}
\ev^n\colon Y_{\hbar,\ve}(\widehat{\mathfrak{sl}}(n))\to \mathcal{U}(\widehat{\mathfrak{gl}}(n)),
\end{equation*}
where $\mathcal{U}(\widehat{\mathfrak{gl}}(n))$ is the standard degreewise completion of the universal enveloping algebra of $\widehat{\mathfrak{gl}}(n)$. 

In \cite{U7}, we gave a homomorphism from the affine Yangian $Y_{\hbar,\ve}(\widehat{\mathfrak{sl}}(n))$ to the universal enveloping algebra of a non-rectangular $W$-algebra of type $A$. 
However, the proof of \cite{U7} is so complicated that we cannot understand the meaning of the construction in \cite{U7}.
In this article, by the similar way to \cite{KU}, we give the another proof to \cite{U7}. Let us fix a positive integer and its partition:
\begin{gather*}
N=q_1+q_2+\cdots+q_l,\qquad q_1\leq q_2\leq\cdots\leq q_v,\ q_{v+1}\geq q_{v+2}\geq \cdots\geq q_l.
\end{gather*}
Similarly to \cite{BK}, we correspond a nilpotent element of $\mathfrak{gl}(N)$ to a diagram.
We take a nilpotent element $f\in\mathfrak{gl}(N)$ whose column heights of the diagram corresponding to $f$ is $(q_1,\cdots,q_l)$. By using the Miura map (Kac-Wakimoto \cite{KW1}), we obtain an embedding
\begin{equation*}
\mathcal{W}^k(\mathfrak{gl}(N),f)\hookrightarrow\bigotimes_{1\leq i\leq l}V^{\kappa_i}(\mathfrak{gl}(q_i)),
\end{equation*}
where $V^{\kappa_i}(\mathfrak{gl}(q_i))$ is the universal affine vertex algebra associated with $\mathfrak{gl}(q_i)$ and its inner product $\kappa_i$. The Miura map induces the injective map
\begin{equation*}
\widetilde{\mu}\colon \mathcal{U}(\mathcal{W}^k(\mathfrak{gl}(N),f))\hookrightarrow\widehat{\bigotimes}_{1\leq i\leq l}U(\widehat{\mathfrak{gl}}(q_i)),
\end{equation*}
where $\widehat{\bigotimes}_{1\leq i\leq l}U(\widehat{\mathfrak{gl}}(q_i))$ is the standard degreewise completion of $\bigotimes_{1\leq i\leq l}U(\widehat{\mathfrak{gl}}(q_i))$.

Let us set $q_{Min}$ as the minimum number of $\{q_i\}_{1\leq i\leq l}$.
By using $\Psi^{m,m+n}$, we can construct a homomorphism
\begin{equation*}
\Delta^l\colon Y_{\hbar,\ve}(\widehat{\mathfrak{sl}}(q_{Min}))\to \bigotimes_{1\leq i\leq l}Y_{\hbar,\ve-(q_i-q_{Min})\hbar}(\widehat{\mathfrak{sl}}(q_i)).
\end{equation*}
In the case that $q_1\geq q_2\geq\dots\geq q_l$, we obtain
\begin{gather*}
\Delta^l=\prod_{1\leq i\leq l-1}((\Psi^{q_{i+1},q_i}\otimes\id)\circ\Delta)\otimes\id^{\otimes l-i-1}).
\end{gather*}
\begin{Theorem}\label{A}
There exists a homomorphism
\begin{equation*}
\Phi\colon Y_{\hbar,\ve}(\widehat{\mathfrak{sl}}(q_{Min}))\to \mathcal{U}(\mathcal{W}^k(\mathfrak{gl}(N),f))
\end{equation*}
uniquely determined by
\begin{equation*}
\widetilde{\mu}\circ\Phi=\bigotimes_{1\leq i\leq l}\ev^{q_i}\circ\Delta^l.
\end{equation*}
\end{Theorem}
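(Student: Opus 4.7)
The uniqueness of $\Phi$ is automatic from the injectivity of the Miura map $\widetilde{\mu}$, so the whole task is existence. I would set
\[
\Phi' := \Bigl(\bigotimes_{1\leq i\leq l}\ev^{q_i}\Bigr)\circ\Delta^l,
\]
which is a homomorphism from $Y_{\hbar,\ve}(\widehat{\mathfrak{sl}}(q_{Min}))$ into $\widehat{\bigotimes}_{1\leq i\leq l}U(\widehat{\mathfrak{gl}}(q_i))$, being a composition of $\Delta^l$ with the tensor product of evaluation maps. The theorem then reduces to the single claim $\Ima(\Phi')\subset\Ima(\widetilde{\mu})$: granting this, one is forced to set $\Phi:=\widetilde{\mu}^{-1}\circ\Phi'$, and the defining identity $\widetilde{\mu}\circ\Phi=\bigotimes_i\ev^{q_i}\circ\Delta^l$ holds by construction.

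To establish the containment I would use the standard characterization of $\Ima(\widetilde{\mu})$ as the joint kernel of the screening operators attached to the columns of the pyramid of $f$, coming from the Kac--Wakimoto realization. Since these operators act as derivations on $\widehat{\bigotimes}_{i}U(\widehat{\mathfrak{gl}}(q_i))$ and $\Phi'$ is an algebra homomorphism, it suffices to verify that the images under $\Phi'$ of a generating set of $Y_{\hbar,\ve}(\widehat{\mathfrak{sl}}(q_{Min}))$ are annihilated by every screening. The minimalistic Drinfeld presentation then reduces the check further to the finite list $x^\pm_{i,0},\,h_{i,0},\,h_{i,1}$, which carries all the information of the defining relations.

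The concluding step is an explicit currents-level calculation. Using the factorization of $\Delta^l$ through $\Delta$ and the maps $\Psi^{q_{i+1},q_i}$ (and, on the increasing prefix $q_1\leq\cdots\leq q_v$, the maps $\Psi_{n,m+n}$), together with the explicit formulas for $\ev^{q_i}$ recorded in \cite{KU}, one writes the image of each generator as a sum of local currents in $\widehat{\bigotimes}_{i}U(\widehat{\mathfrak{gl}}(q_i))$. The shift $\ve\mapsto\ve-(q_i-q_{Min})\hbar$ in the $i$-th factor, hard-wired into $\Delta^l$ by the maps $\Psi^{q_{i+1},q_i}$, is exactly what cancels the normal-ordering correction produced by the screening OPE between adjacent factors, so that the cross-terms telescope against the screening relation. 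This verification is the main obstacle: it generalizes the rectangular calculation of \cite{KU} to distinct factor sizes $q_i$, and one must carefully track how the parameter shift propagates through the coproduct and interacts with the bump in column height at each adjacent pair $(q_i,q_{i+1})$. Once it is carried out, both the existence of $\Phi$ and the stated compatibility of the coproduct $\Delta$ with parabolic induction follow.
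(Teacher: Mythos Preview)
Your high-level reduction is exactly the one the paper uses: define $\Phi'=(\bigotimes_i\ev^{q_i})\circ\Delta^l$, observe that existence of $\Phi$ is equivalent to $\Ima(\Phi')\subset\Ima(\widetilde{\mu})$, and get uniqueness for free from the injectivity of $\widetilde{\mu}$. Where you and the paper diverge is in how the containment is established.

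The paper does \emph{not} argue via screening operators. Instead it works constructively: it first writes down explicit elements $W^{(1)}_{p,q}$ and $W^{(2)}_{p,q}$ in $\bigotimes_s V^{\kappa_s}(\mathfrak{gl}(q_s))$ and proves directly, using the BRST differential $d_0$ from the Kac--Wakimoto complex, that these lie in $\mathcal{W}^k(\mathfrak{gl}(N),f)$. It then \emph{defines} $\Phi$ on the minimalistic generators by explicit formulas built from modes of $W^{(1)}$ and $W^{(2)}$, and the main computation (Theorem~\ref{hojo}) is the termwise verification that $\widetilde{\mu}$ applied to these formulas equals $\Phi'$ on the same generators. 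The advantage of this route is that you get an explicit closed formula for $\Phi$ in terms of W-algebra generators, which is then used directly in the parabolic-induction compatibility argument.

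Your screening approach is a reasonable alternative in spirit, but there is a gap you should be aware of. The characterization of $\mu(\mathcal{W}^k)$ as the joint kernel of screening operators (Genra) lives at the \emph{vertex algebra} level; it does not immediately upgrade to a statement that $\Ima(\widetilde{\mu})$ inside the completed tensor product of enveloping algebras is cut out by derivations. The element $\Phi'(H_{i,1})$ is a degree-zero infinite sum of quadratic monomials in modes, not a single Fourier mode of a field, so ``annihilated by each screening'' is not a priori a well-posed condition on it. In practice, to make your argument go through you would have to recognise $\Phi'(H_{i,1})$ as a polynomial in modes of specific fields in $\bigotimes_s V^{\kappa_s}(\mathfrak{gl}(q_s))$ and then check those fields lie in $\mathcal{W}^k$; but that is exactly the paper's construction of $W^{(2)}_{p,q}$ and its verification that $d_0(W^{(2)}_{p,q})=0$. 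So your proposal, once made precise, converges to the paper's proof. One small correction: the paper's $\Delta^l$ is built entirely from the maps $\Psi^{m,m+n}$ (with the permutation $\sigma$ reordering the factors and choosing $\Delta^\pm$ accordingly); the other edge contraction $\Psi_{n,m+n}$ does not enter.
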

In \cite{U7}, we only consider the case that $q_1\geq q_2\geq\dots\geq q_l$. Theorem~\ref{A} can be applied to more general cases. One of the application of Theorem~\ref{A} is the compatibility with the coproduct for the affine Yangian and the parabolic induction for a $W$-algebra.

Let us consider two $W$-algebras associated with $(\mathfrak{gl}(N_1),f_1)$ and $(\mathfrak{gl}(N_2),f_2)$, where 
\begin{equation*}
N_1=q_1+\cdots+q_w,\ N_2=q_{w+1}+\cdots+q_l
\end{equation*}
and nilpotent elements $f_1$ and $f_2$ are defined in the same way as $f$. 
In \cite{Ge}, Genra constructed a homomorphism called the parabolic induction:
\begin{equation*}
\Delta_W\colon \mathcal{W}^{k}(\mathfrak{gl}(N),f)\to\mathcal{W}^{k+N_2}(\mathfrak{gl}(N_1),f_1)\otimes \mathcal{W}^{k+N_1}(\mathfrak{gl}(N_2),f_2).
\end{equation*}
Genra \cite{Ge} conjectured that the parabolic induction for a $W$-algebra is compatible with the coproduct for the affine Yangian.

Theorem~\ref{A} gives a relationship between the coproduct for the affine Yangian and the parabolic induction for a non-rectangular $W$-algebra. We extend the affine Yangian to two new associative algebras $Y^{p,L}_{\hbar,\ve}(\widehat{\mathfrak{sl}}(n))$ and $Y^{p,R}_{\hbar,\ve}(\widehat{\mathfrak{sl}}(n))$ for $p\geq n$. These new algebras have the following features. We can extend the coproduct $\Delta$ for the affine Yangian to
\begin{align*}
\Delta^{n}\colon Y_{\hbar,\ve}(\widehat{\mathfrak{sl}}(n))\to Y^{p,L}_{\hbar,\ve}(\widehat{\mathfrak{sl}}(n))\widehat{\otimes}Y^{p,R}_{\hbar,\ve}(\widehat{\mathfrak{sl}}(n)).
\end{align*}
For $n\leq q\leq p$, we can extend $\Psi^{n,q}$ to
\begin{align*}
\widetilde{\Psi}^{n,q,L}&\colon Y^{p,L}_{\hbar,\ve+m\hbar}(\widehat{\mathfrak{sl}}(n))\to Y^{p,L}_{\hbar,\ve}(\widehat{\mathfrak{sl}}(q)),\\
\widetilde{\Psi}^{n,q,R}&\colon Y^{p,R}_{\hbar,\ve+m\hbar}(\widehat{\mathfrak{sl}}(n))\to Y^{p,R}_{\hbar,\ve}(\widehat{\mathfrak{sl}}(q)).
\end{align*}
Then, by Theorem~\ref{A}, we obtain two homomorphisms:
\begin{gather*}
\Phi_1\colon Y^{\min\{q_w,q_{w+1}\},L}_{\hbar,\ve-(\min\{q_1,q_w\}-q_{Min})\hbar}(\widehat{\mathfrak{sl}}(\min\{q_1,q_w\})))\to \mathcal{U}(\mathcal{W}^{k+N_2}(\mathfrak{gl}(N_1),f_1)),\\
\Phi_2\colon Y^{\min\{q_w,q_{w+1}\},R}_{\hbar,\ve-(\min\{q_{w+1},q_l\}-q_{Min})\hbar}(\widehat{\mathfrak{sl}}(\min\{q_{w+1},q_l\}))\to \mathcal{U}(\mathcal{W}^{k+N_1}(\mathfrak{gl}(N_2),f_2)).
\end{gather*}
We obtain the following relations for some $a,b\in\mathbb{C}$:
\begin{gather*}
(\Phi_1\circ\tau_{a}\circ\widetilde{\Psi}^{q_l,\min\{q_1,q_w\},L})\otimes\Phi_2\circ\Delta^{n}=\Delta_W\circ\Phi\text{ if }q_1\geq q_l,\\
(\Phi_1\circ\tau_{b})\otimes(\Phi_2\circ\Psi^{q_1,\min\{q_l,q_{w+1}\},R})\circ\Delta^{n}=\Delta_W\circ\Phi\text{ if }q_1<q_l.
\end{gather*}
where $\tau_x$ is a shift operator of the affine Yangian. This means that the parabolic induction $\Delta_W$ is compatible with the coproduct $\Delta$ via the homomorphism $\Phi$.

The motivation of this theorem is the generalization of the AGT conjecture.
The AGT conjecture suggests that there exists a representation of the principal $W$-algebra of type $A$ on the equivariant homology space of the moduli space of $U(r)$-instantons. Schiffmann and Vasserot \cite{SV} gave this representation by using an action of the Yangian associated with $\widehat{\mathfrak{gl}}(1)$ on this equivariant homology space. It is conjectured by Crutzig-Diaconescu-Ma \cite{CE} that an action of an iterated $W$-algebra of type $A$ on the equivariant homology space of the affine Laumon space will be given through an action of an affine shifted Yangian constructed in \cite {FT}.
For the resoulution of the Crutzig-Diaconescu-Ma's conjecture, we need to construct a homomorphism from the shifted affine Yangian to the universal enveloping algebra of an iterated $W$-algebra. However, the shifted affine Yangian is so complicated that we cannot directly construct this homomorphism.
 
In finite setting, Brundan-Kleshchev \cite{BK} gave a surjective homomorphism from a shifted Yangian, which is a subalgebra of the finite Yangian associated with $\mathfrak{gl}(n)$, to a finite $W$-algebra (\cite{Pr}) of type $A$ for its general nilpotent element. A finite $W$-algebra $\mathcal{W}^{\text{fin}}(\mathfrak{g},f)$ is an associative algebra associated with a reductive Lie algebra $\mathfrak{g}$ and its nilpotent element $f$ and is a finite analogue of a $W$-algebra $\mathcal{W}^k(\mathfrak{g},f)$ (\cite{DSK1} and \cite{A1}). In \cite{DKV}, De Sole, Kac and Valeri constructed a homomorphism from the finite Yangian of type $A$ to the finite $W$-algebras of type $A$ by using the Lax operator, which is a restriction of the homomorphism given by Brundan-Kleshchev in \cite{BK}. Actually, the homomorphism $\Phi$ is the affine analogue of the De Sole-Kac-Valeri's homomorphism (see \cite{U9}). Thus, we expect that we can extend $\Phi$ to the shifted affine Yangian and solve the Crutzig-Diaconescu-Ma's conjecture.
\section{Affine Yangian}
Let us recall the definition of the affine Yangian of type $A$ (Definition~3.2 in \cite{Gu2} and Definition~2.3 in \cite{Gu1}). Hereafter, we sometimes identify $\{0,1,2,\cdots,n-1\}$ with $\mathbb{Z}/n\mathbb{Z}$. we also set $\{X,Y\}=XY+YX$ and
\begin{equation*}
a_{i,j} =\begin{cases}
2&\text{if } i=j, \\
-1&\text{if }j=i\pm 1,\\
0&\text{otherwise}
	\end{cases}
\end{equation*}
for $i,j\in\mathbb{Z}/n\mathbb{Z}$.
\begin{Definition}\label{Prop32}
Suppose that $n\geq3$. The affine Yangian $Y_{\hbar,\ve}(\widehat{\mathfrak{sl}}(n))$ is the associative algebra  generated by $X_{i,r}^{+}, X_{i,r}^{-}, H_{i,r}$ $(i \in \{0,1,\cdots, n-1\}, r = 0,1)$ subject to the following defining relations:
\begin{gather}
[H_{i,r}, H_{j,s}] = 0,\label{Eq2.1}\\
[X_{i,0}^{+}, X_{j,0}^{-}] = \delta_{i,j} H_{i, 0},\label{Eq2.2}\\
[X_{i,1}^{+}, X_{j,0}^{-}] = \delta_{i,j} H_{i, 1} = [X_{i,0}^{+}, X_{j,1}^{-}],\label{Eq2.3}\\
[H_{i,0}, X_{j,r}^{\pm}] = \pm a_{i,j} X_{j,r}^{\pm},\label{Eq2.4}\\
[\tilde{H}_{i,1}, X_{j,0}^{\pm}] = \pm a_{i,j}\left(X_{j,1}^{\pm}\right),\text{ if }(i,j)\neq(0,n-1),(n-1,0),\label{Eq2.5}\\
[\tilde{H}_{0,1}, X_{n-1,0}^{\pm}] = \mp \left(X_{n-1,1}^{\pm}+(\ve+\dfrac{n}{2}\hbar) X_{n-1, 0}^{\pm}\right),\label{Eq2.6}\\
[\tilde{H}_{n-1,1}, X_{0,0}^{\pm}] = \mp \left(X_{0,1}^{\pm}-(\ve+\dfrac{n}{2}\hbar) X_{0, 0}^{\pm}\right),\label{Eq2.7}\\
[X_{i, 1}^{\pm}, X_{j, 0}^{\pm}] - [X_{i, 0}^{\pm}, X_{j, 1}^{\pm}] = \pm a_{ij}\dfrac{\hbar}{2} \{X_{i, 0}^{\pm}, X_{j, 0}^{\pm}\}\text{ if }(i,j)\neq(0,n-1),(n-1,0),\label{Eq2.8}\\
[X_{0, 1}^{\pm}, X_{n-1, 0}^{\pm}] - [X_{0, 0}^{\pm}, X_{n-1, 1}^{\pm}]= \mp\dfrac{\hbar}{2} \{X_{0, 0}^{\pm}, X_{n-1, 0}^{\pm}\} + (\ve+\dfrac{n}{2}\hbar) [X_{0, 0}^{\pm}, X_{n-1, 0}^{\pm}],\label{Eq2.9}\\
(\ad X_{i,0}^{\pm})^{1+|a_{i,j}|} (X_{j,0}^{\pm})= 0 \ \text{ if }i \neq j, \label{Eq2.10}
\end{gather}
where we set $\widetilde{H}_{i,1}=H_{i,1}-\dfrac{\hbar}{2}H_{i,0}^2$.
\end{Definition}
By \eqref{Eq2.2}, \eqref{Eq2.3}, \eqref{Eq2.5} and \eqref{Eq2.7}, we find that the affine Yangian $Y_{\hbar,\ve}(\widehat{\mathfrak{sl}}(n))$ is generated by $X^\pm_{i,0}$ for $0\leq i\leq n-1$ and $H_{j,1}$ for $1\leq j\leq n-1$. 

We set a Lie algebra 
\begin{equation*}
\widehat{\mathfrak{gl}}(n)=\mathfrak{gl}(n)\otimes\mathbb{C}[t^{\pm1}]\oplus\mathbb{C}\tilde{c}\oplus\mathbb{C}z
\end{equation*}
whose commutator relations are given by
\begin{gather*}
\text{$z$ and $\tilde{c}$ are central elements of }\widehat{\mathfrak{gl}}(n),\\
\begin{align*}
[E_{i,j}\otimes t^u, E_{p,q}\otimes t^v]
&=(\delta_{j,p}E_{i,q}-\delta_{i,q}E_{p,j})\otimes t^{u+v}+\delta_{u+v,0}u(\delta_{i,q}\delta_{j,p}\widetilde{c}+\delta_{i,j}\delta_{p,q}z).
\end{align*}
\end{gather*}
Here after, in order to simplify the notation, we sometimes denote $E_{i,j}\otimes t^m\in\widehat{\mathfrak{gl}}(n)$ by $E_{i,j}t^m$.
Let us take a Lie subalgebra $\widehat{\mathfrak{sl}}(n)=\mathfrak{sl}(n)\otimes\mathbb{C}[z^{\pm1}]\oplus\mathbb{C}\tilde{c}$ and Chevalley generators of $\widehat{\mathfrak{sl}}(n)$ as
\begin{gather*}
h_i=\begin{cases}
E_{i,i}-E_{i+1,i+1}&\text{ if }i\neq0,\\
E_{n,n}-E_{1,1}+\tilde{c}&\text{ if }i=0,
\end{cases}\\
x^+_i=\begin{cases}
E_{i,i+1}&\text{ if }i\neq0,\\
E_{n,1}t&\text{ if }i=0,
\end{cases}\ x^-_i=\begin{cases}
E_{i+1,i}&\text{ if }i\neq 0,\\
E_{1,n}t^{-1}&\text{ if }i=0.
\end{cases}
\end{gather*}
By the defining relations \eqref{Eq2.1}-\eqref{Eq2.10}, we can give a homomorphism from the universal enveloping algebra of $\widehat{\mathfrak{sl}}(n)$ to the affine Yangian $Y_{\hbar,\ve}(\widehat{\mathfrak{sl}}(n))$ given by $h_i\mapsto H_{i,0}$ and $x^\pm_i\mapsto X^\pm_{i,0}$. We denote the image of $x\in U(\widehat{\mathfrak{sl}}(n))$ via this homomorphism by $x$.

We take one completion of $Y_{\hbar,\ve}(\widehat{\mathfrak{sl}}(n))$. We set the degree of $Y_{\hbar,\ve}(\widehat{\mathfrak{sl}}(n))$ by
\begin{equation*}
\text{deg}(H_{i,r})=0,\ \text{deg}(X^\pm_{i,r})=\begin{cases}
\pm1&\text{ if }i=0,\\
0&\text{ if }i\neq0.
\end{cases}
\end{equation*}
This degree is compatible with the natural degree on the universal enveloping algebra of $\widehat{\mathfrak{sl}}(n)$. We denote the standard degreewise completion of $Y_{\hbar,\ve}(\widehat{\mathfrak{sl}}(n))$ by $\widetilde{Y}_{\hbar,\ve}(\widehat{\mathfrak{sl}}(n))$ in the meaning of A.2 in \cite{A1}. 
\section{The coproduct for the affine Yangian}
By using the minimalistic presentation of the Yangian, Guay-Nakajima-Wendlandt \cite{GNW} gave a coproduct for the Yangian associated with a Kac-Moody Lie algebra of the affine type.
\begin{Theorem}[Theorem~5.2 in \cite{GNW}]
There exist algebra homomorphisms
\begin{equation*}
\Delta^\pm\colon Y_{\hbar,\ve}(\widehat{\mathfrak{sl}}(n))\to Y_{\hbar,\ve}(\widehat{\mathfrak{sl}}(n))\widehat{\otimes} Y_{\hbar,\ve}(\widehat{\mathfrak{sl}}(n))
\end{equation*}
determined by
\begin{gather*}
\Delta^\pm(X^+_{j,0})=X^+_{j,0}\otimes1+1\otimes X^+_{j,0}\text{ for }0\leq j\leq n-1,\\
\Delta^\pm(X^-_{j,0})=X^-_{j,0}\otimes1+1\otimes X^-_{j,0}\text{ for }0\leq j\leq n-1,\\
\Delta^\pm(\widetilde{H}_{i,1})=\widetilde{H}_{i,1}\otimes1+1\otimes \widetilde{H}_{i,1}+A_i^\pm\text{ for }1\leq i\leq n-1,
\end{gather*}
where $Y_{\hbar,\ve}(\widehat{\mathfrak{sl}}(n))\widehat{\otimes} Y_{\hbar,\ve}(\widehat{\mathfrak{sl}}(n))$ is the standard degreewise completion of $\otimes^2Y_{\hbar,\ve}(\widehat{\mathfrak{sl}}(n))$ and
\begin{align*}
A_i^+&=-\hbar(E_{i,i}\otimes E_{i+1,i+1}+E_{i+1,i+1}\otimes E_{i,i})\\
&\quad+\hbar\displaystyle\sum_{s \geq 0}  \limits\displaystyle\sum_{u=1}^{i}\limits (-E_{u,i}t^{-s-1}\otimes E_{i,u}t^{s+1}+E_{i,u}t^{-s}\otimes E_{u,i}t^s)\\
&\quad+\hbar\displaystyle\sum_{s \geq 0} \limits\displaystyle\sum_{u=i+1}^{n}\limits (-E_{u,i}t^{-s}\otimes E_{i,u}t^{s}+E_{i,u}t^{-s-1}\otimes E_{u,i}t^{s+1})\\
&\quad-\hbar\displaystyle\sum_{s \geq 0}\limits\displaystyle\sum_{u=1}^{i}\limits (-E_{u,i+1}t^{-s-1}\otimes E_{i+1,u}t^{s+1}+E_{i+1,u}t^{-s}\otimes E_{u,i+1}t^s)\\
&\quad-\hbar\displaystyle\sum_{s \geq 0}\limits\displaystyle\sum_{u=i+1}^{n} \limits (-E_{u,i+1}t^{-s}\otimes E_{i+1,u}t^{s}+E_{i+1,u}t^{-s-1}\otimes E_{u,i+1}t^{s+1}),\\
A_i^-&=-\hbar(E_{i,i}\otimes E_{i+1,i+1}+E_{i+1,i+1}\otimes E_{i,i})\\
&\quad+\hbar\displaystyle\sum_{s \geq 0}  \limits\displaystyle\sum_{u=1}^{i}\limits (-E_{i,u}t^{s+1}\otimes E_{u,i}t^{-s-1}+E_{u,i}t^s\otimes E_{i,u}t^{-s})\\
&\quad+\hbar\displaystyle\sum_{s \geq 0} \limits\displaystyle\sum_{u=i+1}^{n}\limits (-E_{i,u}t^{s}\otimes E_{u,i}t^{-s}+E_{u,i}t^{s+1}\otimes E_{i,u}t^{-s-1})\\
&\quad-\hbar\displaystyle\sum_{s \geq 0}\limits\displaystyle\sum_{u=1}^{i}\limits (-E_{i+1,u}t^{s+1}\otimes E_{u,i+1}t^{-s-1}+E_{u,i+1}t^s\otimes E_{i+1,u}t^{-s})\\
&\quad-\hbar\displaystyle\sum_{s \geq 0}\limits\displaystyle\sum_{u=i+1}^{n} \limits (-E_{i+1,u}t^{s}\otimes E_{u,i+1}t^{-s}+E_{u,i+1}t^{s+1}\otimes E_{i+1,u}t^{-s-1}).
\end{align*}
\end{Theorem}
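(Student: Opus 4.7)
The plan is to establish $\Delta^+$ directly by the method of Guay--Nakajima--Wendlandt \cite{GNW} and then to deduce $\Delta^-$ from $\Delta^+$ by a flip symmetry. The key structural input, already noted in the paper, is the minimalistic presentation: relations \eqref{Eq2.2}, \eqref{Eq2.3}, \eqref{Eq2.5} and \eqref{Eq2.7} recover $X^\pm_{i,1}$ from $X^\pm_{i,0}$ and $\widetilde{H}_{i,1}$, so the Yangian is generated by $X^\pm_{i,0}$ ($0\le i\le n-1$) together with $\widetilde{H}_{i,1}$ ($1\le i\le n-1$). It therefore suffices to specify $\Delta^\pm$ on these generators and to verify the finitely many defining relations \eqref{Eq2.1}--\eqref{Eq2.10}.

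For $\Delta^+$, I would first check that each $A_i^+$ converges in the standard degreewise completion: each summand such as $E_{u,i}t^{-s-1}\otimes E_{i,u}t^{s+1}$ has bi-degree $(-s-1,s+1)$, so only finitely many $s$ contribute to any fixed bi-homogeneous component. The relations in which the tail $A_i^+$ plays no role are routine: the Serre relations \eqref{Eq2.10} and the identity $[X^+_{i,0},X^-_{j,0}]=\delta_{i,j}H_{i,0}$ follow from primitivity of $X^\pm_{j,0}$, while \eqref{Eq2.1} for two $\widetilde{H}_{i,1}$'s is a direct computation inside the current-algebra completion. The substance lies in the mixed relations \eqref{Eq2.5}--\eqref{Eq2.7} and the loop-Serre relations \eqref{Eq2.8}--\eqref{Eq2.9}: one expands the images in $U(\widehat{\mathfrak{gl}}(n))\widehat\otimes U(\widehat{\mathfrak{gl}}(n))$, computes $[A_i^+,\,X^\pm_{j,0}\otimes 1+1\otimes X^\pm_{j,0}]$ explicitly via the bracket rules of $\widehat{\mathfrak{gl}}(n)$, and shows that the telescoping sums collapse to the prescribed right-hand sides.

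For $\Delta^-$, I would exploit the symmetry $A_i^- = \tau(A_i^+)$, where $\tau(a\otimes b)=b\otimes a$ is the flip, which extends to the degreewise completion and is manifestly an algebra automorphism of $Y_{\hbar,\ve}(\widehat{\mathfrak{sl}}(n))\widehat\otimes Y_{\hbar,\ve}(\widehat{\mathfrak{sl}}(n))$. The equality is visible term-by-term: for instance, the summand $-E_{u,i}t^{-s-1}\otimes E_{i,u}t^{s+1}$ of $A_i^+$ is sent by $\tau$ to $-E_{i,u}t^{s+1}\otimes E_{u,i}t^{-s-1}$, which is the leading summand of the second line of $A_i^-$; the remaining pairs match analogously, and the symmetric prefactor $-\hbar(E_{i,i}\otimes E_{i+1,i+1}+E_{i+1,i+1}\otimes E_{i,i})$ is $\tau$-invariant. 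Since the primitive parts of $\Delta^\pm$ on $X^\pm_{j,0}$ and on $\widetilde{H}_{i,1}$ are also $\tau$-invariant, one obtains $\Delta^- = \tau\circ\Delta^+$, so $\Delta^-$ is an algebra homomorphism as a composition of two.

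The main obstacle is the explicit commutator calculation entering the twisted boundary relations \eqref{Eq2.6}, \eqref{Eq2.7} and \eqref{Eq2.9}, in which the affine node $i=0$ interacts with $i=n-1$: the spectral-parameter shift $\ve+\tfrac{n}{2}\hbar$ and the central element $\tilde c$ arise precisely because opposite powers of $t$ in $A_i^+$ pair nontrivially against $X^\pm_{0,0}=E_{n,1}t^{\pm 1}$, producing central-extension contributions upon bracketing. Matching these coefficients exactly is the delicate step, and this is where the minimalistic presentation is essential, since it shrinks the verification to a finite, though intricate, check rather than an infinite family of Drinfeld relations.
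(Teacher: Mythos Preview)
The paper does not supply its own proof of this theorem: it is quoted as Theorem~5.2 of \cite{GNW} and used as background. There is therefore no in-paper argument to compare against; your sketch is essentially the strategy of \cite{GNW} itself, namely to exploit the minimalistic presentation so that only finitely many relations among $X^\pm_{j,0}$ and $\widetilde H_{i,1}$ must be checked.

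Your derivation of $\Delta^-$ from $\Delta^+$ via the flip $\tau$ is a clean shortcut and is correct at the level of formulas: the displayed terms of $A_i^-$ are exactly $\tau(A_i^+)$, and the primitive parts are symmetric. One point deserves a sentence of justification that you glossed over: the flip must extend to the standard degreewise completion $Y\widehat\otimes Y$. In $A_i^+$ the first tensor factor carries the negative $t$-powers, while in $A_i^-$ the second factor does; that both infinite sums are admissible is exactly the statement that the completion is symmetric in the two factors, which is true for the degreewise completion in the sense the paper uses (the total grading is symmetric). It would be worth stating this explicitly rather than calling it ``manifest.''

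A small imprecision: you list \eqref{Eq2.6}, \eqref{Eq2.7}, \eqref{Eq2.9} among the relations to verify, but $\widetilde H_{0,1}$ is not among the chosen generating set---only $\widetilde H_{i,1}$ for $1\le i\le n-1$ are. In the minimalistic approach one does not check \eqref{Eq2.6} directly; rather, $\Delta^\pm(\widetilde H_{0,1})$ is \emph{defined} through \eqref{Eq2.3} and \eqref{Eq2.7} from the images already fixed, and the content is that the remaining relations (those expressible purely in the chosen generators) hold. Your intuition about where the central term $\tilde c$ and the shift $\ve+\tfrac n2\hbar$ enter is right, but the bookkeeping of which relations are axioms and which are consequences should be tightened.
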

The homomorphism $\Delta^\pm$ is said to be the coproduct for the affine Yangian since $\Delta^\pm$ satisfies the coassociativity.

By a direct computation, we have
\begin{align*}
\Delta(\widetilde{H}_{i,1})=\begin{cases}
\widetilde{H}_{i,1}\otimes 1+1\otimes\widetilde{H}_{i,1}+\widetilde{A}^\pm_{i}-\widetilde{A}^\pm_{i+1}\text{ if }i\neq0,\\
\widetilde{H}_{0,1}\otimes 1+1\otimes\widetilde{H}_{0,1}+\widetilde{A}^\pm_{n}-\widetilde{A}^\pm_{1}\text{ if }i=0,
\end{cases}
\end{align*}
where
\begin{align*}
\widetilde{A}_i^+&=\hbar\displaystyle\sum_{s \geq 0}  \limits\displaystyle\sum_{u=1}^{i-1}\limits (-E_{u,i}t^{-s-1}\otimes E_{i,u}t^{s+1}+E_{i,u}t^{-s}\otimes E_{u,i}t^s)\\
&\quad+\hbar\displaystyle\sum_{s \geq 0} \limits\displaystyle\sum_{u=i+1}^{n}\limits (-E_{u,i}t^{-s}\otimes E_{i,u}t^{s}+E_{i,u}t^{-s-1}\otimes E_{u,i}t^{s+1}),\\
\widetilde{A}_i^-&=\hbar\displaystyle\sum_{s \geq 0}  \limits\displaystyle\sum_{u=1}^{i-1}\limits (-E_{i,u}t^{s+1}\otimes E_{u,i}t^{-s-1}+E_{u,i}t^s\otimes E_{i,u}t^{-s})\\
&\quad+\hbar\displaystyle\sum_{s \geq 0} \limits\displaystyle\sum_{u=i+1}^{n}\limits (-E_{i,u}t^{s}\otimes E_{u,i}t^{-s}+E_{u,i}t^{s+1}\otimes E_{i,u}t^{-s-1}).
\end{align*}
\section{The evaluation map for the affine Yangian}
The evaluation map for the affine Yangian is a non-trivial homomorphism from the affine Yangian $Y_{\hbar,\ve}(\widehat{\mathfrak{sl}}(n))$ to the completion of the universal enveloping algebra of the affinization of $\mathfrak{gl}(n)$. 
We take the grading of $U(\widehat{\mathfrak{gl}}(n))/U(\widehat{\mathfrak{gl}}(n))(z-1)$ as $\text{deg}(Xt^s)=s$ and $\text{deg}(\tilde{c})=0$. We denote the degreewise completion of $U(\widehat{\mathfrak{gl}}(n))/U(\widehat{\mathfrak{gl}}(n))(z-1)$ by $\mathcal{U}(\widehat{\mathfrak{gl}}(n))$.
\begin{Theorem}[Theorem 3.8 in \cite{K1} and Theorem 4.18 in \cite{K2}]\label{thm:main}
Suppose that $\hbar\neq0$ and $\tilde{c} =\dfrac{\ve}{\hbar}$.
For $a\in\mathbb{C}$, there exists an algebra homomorphism 
\begin{equation*}
\ev_{\hbar,\ve}^{n,a} \colon Y_{\hbar,\ve}(\widehat{\mathfrak{sl}}(n)) \to \mathcal{U}(\widehat{\mathfrak{gl}}(n))
\end{equation*}
uniquely determined by 
\begin{gather*}
	\ev_{\hbar,\ve}^{n,a}(X_{i,0}^{+}) = \begin{cases}
E_{n,1}t&\text{ if }i=0,\\
E_{i,i+1}&\text{ if }1\leq i\leq n-1,
\end{cases} \ev_{\hbar,\ve}^{n,a}(X_{i,0}^{-}) = \begin{cases}
E_{1,n}t^{-1}&\text{ if }i=0,\\
E_{i+1,i}&\text{ if }1\leq i\leq n-1,
\end{cases}
\end{gather*}
and
\begin{align*}
\ev_{\hbar,\ve}^{n,a}(H_{i,1}) &=(a-\dfrac{i}{2}\hbar) \ev_{\hbar,\ve}^{n,a}(H_{i,0}) -\hbar E_{i,i}E_{i+1,i+1} \\
&\quad+ \hbar \displaystyle\sum_{s \geq 0}  \limits\displaystyle\sum_{k=1}^{i}\limits  E_{i,k}t^{-s}E_{k,i}t^s+\hbar \displaystyle\sum_{s \geq 0} \limits\displaystyle\sum_{k=i+1}^{n}\limits  E_{i,k}t^{-s-1}E_{k,i}t^{s+1}\\
&\quad-\hbar\displaystyle\sum_{s \geq 0}\limits\displaystyle\sum_{k=1}^{i}\limits E_{i+1,k}t^{-s} E_{k,i+1}t^{s}-\hbar\displaystyle\sum_{s \geq 0}\limits\displaystyle\sum_{k=i+1}^{n} \limits E_{i+1,k}t^{-s-1} E_{k,i+1}t^{s+1}
\end{align*}
for $i\neq0$.
\end{Theorem}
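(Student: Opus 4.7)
The plan is to define the map on the given generators and then verify each defining relation \eqref{Eq2.1}--\eqref{Eq2.10} on those generators. The images $\ev_{\hbar,\ve}^{n,a}(X^\pm_{i,0})$ are by construction the Chevalley generators $x^\pm_i$ of $\widehat{\mathfrak{sl}}(n)\subset\widehat{\mathfrak{gl}}(n)$ (with the affine generator $x^\pm_0$ realized as $E_{n,1}t$ and $E_{1,n}t^{-1}$), and the image of $H_{i,0}$ is the corresponding Cartan element. Consequently the relations \eqref{Eq2.1} in degree $(0,0)$, \eqref{Eq2.2}, \eqref{Eq2.4} and the Serre relations \eqref{Eq2.10} all follow at once from the Lie algebra structure of $\widehat{\mathfrak{sl}}(n)$.

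Next I would fix the image of $\widetilde{H}_{0,1}$ by the natural symmetric analogue of the displayed formula (its precise shape being dictated by the need for the $\widetilde{H}_{i,1}$ to commute). Once all $\widetilde{H}_{i,1}$ are defined, the images of $X^\pm_{j,1}$ are \emph{forced} by \eqref{Eq2.5}--\eqref{Eq2.7}: one simply evaluates $\tfrac{1}{2}[\widetilde{H}_{j,1},X^\pm_{j,0}]$ in $\mathcal{U}(\widehat{\mathfrak{gl}}(n))$, adjusting by the $\ve+\tfrac{n}{2}\hbar$ correction at the affine node. Here there is a consistency check: for those $j$ for which several indices $i$ satisfy $(i,j)\neq(0,n-1),(n-1,0)$ with $a_{i,j}\neq 0$, the different prescriptions of \eqref{Eq2.5} must produce the same image. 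Granting this, the commutativity of the $H_{i,1}$ (rest of \eqref{Eq2.1}), the identity \eqref{Eq2.3}, and the quadratic relations \eqref{Eq2.8} reduce to direct bracket calculations between matrix units, using only the commutator of $\widehat{\mathfrak{gl}}(n)$ and the assumption $\widetilde{c}=\ve/\hbar$ to cancel central contributions.

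The main obstacle will be the affine-node relations \eqref{Eq2.6}, \eqref{Eq2.7}, \eqref{Eq2.9}, where the quadratic image of $\widetilde{H}_{i,1}$ must be bracketed against $E_{n,1}t$ or $E_{1,n}t^{-1}$. Each bracket produces an infinite sum $\sum_{s\geq0}$ of double products in which most terms telescope, and the surviving residue is a combination of (i) a single unbroken string of matrix units of the form $E_{a,1}t$ or $E_{n,b}t$ (responsible for $X^\pm_{n-1,1}$), and (ii) a scalar arising from the central term $u\widetilde{c}$ in $[E_{i,j}t^u,E_{j,i}t^{-u}]$. The length-$n$ cyclic structure of the matrix indices, together with the Cartan-squared term $-\hbar E_{i,i}E_{i+1,i+1}$ in $\ev(\widetilde{H}_{i,1})$, is what produces the $\tfrac{n}{2}\hbar$ shift, while the substitution $\widetilde{c}=\ve/\hbar$ produces the $\ve$ contribution; the sign comes from whether we are acting from the left or the right. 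Controlling the $s=0$ boundary term, verifying convergence inside the degreewise completion $\mathcal{U}(\widehat{\mathfrak{gl}}(n))$, and booking the Cartan contributions correctly is where the calculation is most delicate.

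Finally, \eqref{Eq2.9} is obtained by bracketing the already-verified versions of \eqref{Eq2.8} with the appropriate Chevalley generator and invoking the Jacobi identity, so no genuinely new computation is needed. The same Jacobi-style argument takes care of the Serre relations involving $X^\pm_{j,1}$ implicit in the presentation. This completes the verification that the given assignment extends to a well-defined algebra homomorphism $\ev_{\hbar,\ve}^{n,a}$.
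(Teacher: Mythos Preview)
The paper does not actually prove this theorem: it is quoted verbatim from Kodera's work (Theorem~3.8 in \cite{K1} and Theorem~4.18 in \cite{K2}) and no proof is supplied here. So there is no ``paper's own proof'' to compare against beyond the original references.

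That said, your outline is the standard route and is essentially what Kodera carries out: send the degree-zero generators to Chevalley generators of $\widehat{\mathfrak{sl}}(n)$, write down an explicit quadratic expression for each $H_{i,1}$ (including $i=0$), and verify the relations \eqref{Eq2.1}--\eqref{Eq2.10} by direct bracket computations in $\mathcal{U}(\widehat{\mathfrak{gl}}(n))$, with the central charge constraint $\tilde{c}=\ve/\hbar$ absorbing the anomalous terms at the affine node. Two comments on the sketch itself. First, your claim that \eqref{Eq2.9} ``is obtained by bracketing the already-verified versions of \eqref{Eq2.8} with the appropriate Chevalley generator and invoking the Jacobi identity'' is too optimistic: \eqref{Eq2.9} concerns the pair $(0,n-1)$, which is explicitly excluded from \eqref{Eq2.8}, and the extra $(\ve+\tfrac{n}{2}\hbar)$ term does not arise from any Jacobi manipulation of \eqref{Eq2.8}---it must be checked directly, and this is in fact where the hypothesis $\tilde{c}=\ve/\hbar$ is genuinely used. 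Second, the ``consistency check'' you mention (that different choices of $i$ in \eqref{Eq2.5} give the same $X^\pm_{j,1}$) and the commutativity $[H_{i,1},H_{j,1}]=0$ are the computationally heaviest parts of the argument; your sketch correctly identifies them but does not indicate how the infinite sums are organized to make the cancellations visible. In Kodera's papers these are handled by packaging the quadratic tails into current-algebra expressions and computing OPE-style.
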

\begin{Remark}
The universal enveloping algebra of the universal affine vertex algebra associated with $\mathfrak{gl}(n)$ coincides with the stadard degreewise completion of the universal enveloping algebra of $\widehat{\mathfrak{gl}}(n)$. Also, the universal affine vertex algebra is a $W$-algebra associated with a nilpotent element $0$.
Thus, the evaluation map is the easiest example of a homomorphism from the affine Yangian to the universal enveloping algebra of a $W$-algebra.
\end{Remark}
\section{A homomorphism from the affine Yangian $Y_{\hbar,\ve+m\hbar}(\widehat{\mathfrak{sl}}(n))$ to the affine Yangian $Y_{\hbar,\ve}(\widehat{\mathfrak{sl}}(n+m))$}
In \cite{U8}, we gave a homomorphism from the affine Yangian $Y_{\hbar,\ve}(\widehat{\mathfrak{sl}}(n))$ to the affine Yangian $Y_{\hbar,\ve}(\widehat{\mathfrak{sl}}(n+1))$. In \cite{U10}. we gave another homomorphism from the affine Yangian $Y_{\hbar,\ve+\hbar}(\widehat{\mathfrak{sl}}(n))$ to the affine Yangian $Y_{\hbar,\ve}(\widehat{\mathfrak{sl}}(n+1))$. In this article, we only use the latter homomorphism.
\begin{Theorem}[Theorem 3.1 in \cite{U10}]\label{e}
For $m\geq 3$ and $n\geq 1$, there exists a homomorphism
    \begin{gather*}
\Psi^{m,m+n}\colon Y_{\hbar,\ve+n\hbar}(\widehat{\mathfrak{sl}}(m))\to \widetilde{Y}_{\hbar,\ve}(\widehat{\mathfrak{sl}}(m+n))
\end{gather*}
determined by
\begin{gather*}
\Psi^{m,m+n}(X^+_{i,0})=\begin{cases}
E_{m+n,n+1}t&\text{ if }i=0,\\
E_{n+i,n+i+1}&\text{ if }i\neq 0,
\end{cases}\ 
\Psi^{m,m+n}(X^-_{i,0})=\begin{cases}
E_{n+1,m+n}t^{-1}&\text{ if }i=0,\\
E_{n+i+1,n+i}&\text{ if }i\neq 0,
\end{cases}
\end{gather*}
and
\begin{align*}
\Psi^{m,m+n}(H_{i,1})&= H_{i+n,1}+\hbar\displaystyle\sum_{s \geq 0}\limits\sum_{k=1}^n E_{k,n+i}t^{-s-1}E_{n+i,k}t^{s+1}\\
&\quad-\hbar\displaystyle\sum_{s \geq 0}\limits\sum_{k=1}^n E_{k,n+i+1}t^{-s-1} E_{n+i+1,k}t^{s+1}
\end{align*}
for $i\neq0$.
\end{Theorem}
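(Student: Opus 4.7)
My approach is to verify that the proposed images satisfy all the defining relations \eqref{Eq2.1}--\eqref{Eq2.10} of $Y_{\hbar,\ve+n\hbar}(\widehat{\mathfrak{sl}}(m))$, reading those relations with $\ve$ replaced by $\ve+n\hbar$. Since $Y_{\hbar,\ve+n\hbar}(\widehat{\mathfrak{sl}}(m))$ is generated by the degree-zero Chevalley generators $X^\pm_{i,0}$ ($0\leq i\leq m-1$) together with the degree-one Cartan generators $H_{j,1}$ ($1\leq j\leq m-1$), it is enough to check only those relations that involve this minimal set; relations involving $X^\pm_{i,1}$ or $H_{0,1}$ then follow automatically from the presentation.

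\textbf{Chevalley-type relations.} The images $\Psi^{m,m+n}(X^\pm_{i,0})$ realize an $\widehat{\mathfrak{sl}}(m)$-sub-system inside the loop algebra $\widehat{\mathfrak{sl}}(m+n)$: for $i\neq 0$ one obtains the Chevalley generators $E_{n+i,n+i+1}$ and $E_{n+i+1,n+i}$ of the sub-$\mathfrak{sl}(m)$ supported in rows and columns $n+1,\dots,m+n$, while the affine node is sent to $E_{m+n,n+1}t$ and $E_{n+1,m+n}t^{-1}$, which close the cycle of the embedded affine Dynkin diagram. Using the commutation rules of $\widehat{\mathfrak{gl}}(m+n)$, the relations \eqref{Eq2.2}, \eqref{Eq2.4} (for $r=0$), and the Serre relations \eqref{Eq2.10} follow by direct inspection.

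\textbf{Degree-one Cartan relations.} For the relations that involve $\widetilde H_{i,1}$, I write $\Psi^{m,m+n}(\widetilde H_{i,1}) = \widetilde H_{i+n,1} + C_i$, where $C_i$ is the quadratic-in-matrix-units correction obtained by subtracting the known difference $\tfrac{\hbar}{2}(\Psi^{m,m+n}(H_{i,0})^2 - H_{i+n,0}^2)$ from $\Psi^{m,m+n}(H_{i,1})-H_{i+n,1}$. For each $X^\pm_{j,0}$, the bracket $[\Psi^{m,m+n}(\widetilde H_{i,1}),\Psi^{m,m+n}(X^\pm_{j,0})]$ splits as $[\widetilde H_{i+n,1},\Psi^{m,m+n}(X^\pm_{j,0})]$, which is calculable from \eqref{Eq2.5}--\eqref{Eq2.7} in the target algebra, plus $[C_i,\Psi^{m,m+n}(X^\pm_{j,0})]$. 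The latter bracket, being the commutator of a sum of degree-zero quadratic terms in $E_{a,b}t^s$ with a single matrix unit, telescopes drastically under the basic $\widehat{\mathfrak{gl}}$-commutator. Matching the two sides against the source formulas verifies \eqref{Eq2.3} and \eqref{Eq2.5}--\eqref{Eq2.7}. Relations \eqref{Eq2.8}--\eqref{Eq2.9} are handled by a parallel but slightly longer bracket computation using \eqref{Eq2.8}--\eqref{Eq2.9} in the target together with the explicit form of $\Psi^{m,m+n}(X^\pm_{j,1})$ deduced from \eqref{Eq2.5}.

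\textbf{Main obstacle.} The principal difficulty is the affine-node relations \eqref{Eq2.6}, \eqref{Eq2.7}, and \eqref{Eq2.9}, together with \eqref{Eq2.3} at $j=0$. In these cases $\Psi^{m,m+n}(X^\pm_{0,0})$ carries a nontrivial $t^{\pm 1}$, so its commutator with each summand of $C_i$ produces a central contribution through the rule $[E_{a,b}t^u,E_{b,a}t^{-u}]=E_{a,a}-E_{b,b}+u\tilde c$. Reconciling the accumulated central contributions with the constant $(\ve+n\hbar)+\tfrac{m}{2}\hbar$ appearing on the source side of \eqref{Eq2.6}, \eqref{Eq2.7}, \eqref{Eq2.9}, against the target constant $\ve+\tfrac{m+n}{2}\hbar$ and the central-charge identification $\tilde c=\ve/\hbar$, is exactly what accounts for the parameter shift $\ve\mapsto\ve+n\hbar$. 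This numerical bookkeeping of central-charge contributions through the telescoping sums is the technical heart of the proof; once it is verified, the universal property of $Y_{\hbar,\ve+n\hbar}(\widehat{\mathfrak{sl}}(m))$ yields the theorem.
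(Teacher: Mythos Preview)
The paper does not prove this theorem: it is quoted verbatim from \cite{U10} as Theorem~3.1 there, so there is no ``paper's own proof'' to compare against. Your outline---check that the proposed images satisfy the minimal set of defining relations \eqref{Eq2.1}--\eqref{Eq2.10} with parameters $(\hbar,\ve+n\hbar)$---is the natural approach and is presumably what \cite{U10} carries out.

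One point in your sketch is not right, however. In the ``Main obstacle'' paragraph you invoke ``the central-charge identification $\tilde c=\ve/\hbar$'' to balance the constants in \eqref{Eq2.6}, \eqref{Eq2.7}, \eqref{Eq2.9}. That identification is imposed only in Theorem~\ref{thm:main} as a hypothesis on the target of the evaluation map $\ev^{n,a}_{\hbar,\ve}$; it does \emph{not} hold in the Yangian $\widetilde{Y}_{\hbar,\ve}(\widehat{\mathfrak{sl}}(m+n))$ itself, where $\tilde c=\sum_i H_{i,0}$ is a genuine (non-scalar) central element. Since $\Psi^{m,m+n}$ lands in the Yangian, not in $\mathcal{U}(\widehat{\mathfrak{gl}}(m+n))$, you cannot use $\tilde c=\ve/\hbar$ in the verification. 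The parameter shift $\ve\mapsto\ve+n\hbar$ must instead be accounted for purely by the index shift $i\mapsto i+n$ in the target relations \eqref{Eq2.5}--\eqref{Eq2.9} together with the form of the quadratic correction $C_i$; indeed, the paper's displayed formula for $\Psi^{m,m+n}(\widetilde H_{0,1})$ shows an explicit $n\tilde c$ term surviving, which would be inconsistent with your claimed reduction to a scalar. So the bookkeeping you describe is the right kind of computation, but the mechanism producing the shift is different from what you state.
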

In particular, we obtain
\begin{align*}
\Psi^{m,m+n}(\widetilde{H}_{i,1})&= \widetilde{H}_{i+n,1}+\hbar\displaystyle\sum_{s \geq 0}\limits\sum_{k=1}^n E_{k,n+i}t^{-s-1}E_{n+i,k}t^{s+1}\\
&\quad-\hbar\displaystyle\sum_{s \geq 0}\limits\sum_{k=1}^n E_{k,n+i+1}t^{-s-1} E_{n+i+1,k}t^{s+1},\\
\Psi^{m,m+n}(\widetilde{H}_{0,1})&=\widetilde{H}_{0,1}+\sum_{i=1}^n\widetilde{H}_{i,1}+\dfrac{\hbar}{2}(nE_{m+n,m+n}+\sum_{u=1}^n\limits E_{u,u}+n\widetilde{c})\\
&\quad+\hbar\displaystyle\sum_{s \geq 0}\limits\sum_{k=1}^n E_{k,n+m}t^{-s-1}E_{n+m,k}t^{s+1}\\
&\quad-\hbar\displaystyle\sum_{s \geq 0}\limits\sum_{k=1}^n E_{k,m+1}t^{-s-1} E_{m+1,k}t^{s+1}.
\end{align*}
We will use another edge contraction
\section{$W$-algebras of type $A$}
We fix some notations for vertex algebras. For a vertex algebra $V$, we denote the generating field associated with $v\in V$ by $v(z)=\displaystyle\sum_{n\in\mathbb{Z}}\limits v_{(n)}z^{-n-1}$. We also denote the OPE of $V$ by
\begin{equation*}
u(z)v(w)\sim\displaystyle\sum_{s\geq0}\limits \dfrac{(u_{(s)}v)(w)}{(z-w)^{s+1}}
\end{equation*}
for all $u, v\in V$. We denote the vacuum vector (resp.\ the translation operator) by $|0\rangle$ (resp.\ $\partial$).
We denote the universal affine vertex algebra associated with a finite dimensional Lie algebra $\mathfrak{g}$ and its inner product $\kappa$ by $V^\kappa(\mathfrak{g})$. By the PBW theorem, we can identify $V^\kappa(\mathfrak{g})$ with $U(t^{-1}\mathfrak{g}[t^{-1}])$. In order to simplify the notation, here after, we denote the generating field $(ut^{-1})(z)$ as $u(z)$. By the definition of $V^\kappa(\mathfrak{g})$, the generating fields $u(z)$ and $v(z)$ satisfy the OPE
\begin{gather}
u(z)v(w)\sim\dfrac{[u,v](w)}{z-w}+\dfrac{\kappa(u,v)}{(z-w)^2}\label{OPE1}
\end{gather}
for all $u,v\in\mathfrak{g}$.

We take a positive integer and its partition:
\begin{equation*}
N=\displaystyle\sum_{i=1}^lq_i,\qquad q_1\leq q_2\leq\cdots\leq q_v,\ q_{v+1}\geq q_{v+2}\geq\cdots\geq q_l.
\end{equation*}
We also set 
\begin{align*}
q_{Max}&=\text{max}(q_v,q_{v+1}),\ q_{Min}=\text{min}(q_1,q_{l}).
\end{align*}
We also fix an inner product of $\mathfrak{gl}(N)$ determined by
\begin{equation*}
(E_{i,j}|E_{p,q})=k\delta_{i,q}\delta_{p,j}+\delta_{i,j}\delta_{p,q}.
\end{equation*}
For $1\leq i\leq N$, we set $1\leq \col(i)\leq l$ and $q_{Max}-q_{\col(i)}<\row(i)\leq q_{Max}$ satisfying
\begin{gather*}
\col(i)=s\text{ if }\sum_{j=1}^{s-1}q_j<i\leq\sum_{j=1}^sq_j,\ 
\row(i)=i-\sum_{j=1}^{\col(i)-1}q_j+q_{Max}-q_{\col(i)}.
\end{gather*}
We take a nilpotent element $f$ as 
\begin{equation*}
f=\sum_{1\leq j\leq N}\limits e_{\hat{j},j},
\end{equation*}
where the integer $1\leq \hat{j}\leq N$ is determined by
\begin{equation*}
\col(\hat{j})=\col(j)+1,\ \row(\hat{j})=\row(j).
\end{equation*}
We consider the following Lie superalgebras:
\begin{gather*}
\mathfrak{b}=\bigoplus_{\substack{1\leq i,j\leq N\\\col(i)\geq\col(j)}}\limits\mathbb{C}e_{i,j},\\
\mathfrak{a}=\mathfrak{b}\oplus\displaystyle\bigoplus_{\substack{1\leq i,j\leq N\\\col(i)>\col(j)}}\limits\mathbb{C}\psi_{i,j}
\end{gather*}
whose commutator relations;
\begin{align*}
[e_{i,j},\psi_{p,q}]&=\delta_{j,p}\psi_{i,q}-\delta_{i,q}\psi_{p,j},\\
[\psi_{i,j},\psi_{p,q}]&=0,
\end{align*}
where $e_{i,j}$ is an even element and $\psi_{i,j}$ is an odd element. We set the inner product on $\mathfrak{a}$ such that
\begin{gather*}
\widetilde{\kappa}(e_{i,j},e_{p,q})=\kappa(e_{i,j},e_{p,q}),\qquad\widetilde{\kappa}(e_{i,j},\psi_{p,q})=\widetilde{\kappa}(\psi_{i,j},\psi_{p,q})=0.
\end{gather*}
By the PBW theorem, we identify $V^{\widetilde{\kappa}}(\mathfrak{a})$ with $U(t^{-1}\mathfrak{a}[t^{-1}])$. For all $u\in \mathfrak{a}$, let $u[-s]$ be $ut^{-s}$. In this section, we regard $V^{\widetilde{\kappa}}(\mathfrak{a})$ (resp.\ $V^{\widetilde{\kappa}|_{\mathfrak{b}}}(\mathfrak{b})$) as a non-associative superalgebra structure by $(-1)$-product. In particular, we obtain
\begin{equation*}
u[-w]\cdot v[-s]=(u[-w])_{(-1)}v[-s].
\end{equation*}
We sometimes omit $\cdot$ in order to simplify the notation. 

We denote the $W$-algebra associated with $\mathfrak{gl}(N)$ and $f$ by $\mathcal{W}^k(\mathfrak{gl}(N),f)$. By \cite{KW1} and \cite{KW2}, a $W$-algebra $\mathcal{W}^k(\mathfrak{gl}(N),f)$ can be realized as the vertex subalgebra of $V^\kappa(\mathfrak{b})$.
Let us define an odd differential $d_0 \colon V^{\kappa}(\mathfrak{b})\to V^{\widetilde{\kappa}}(\mathfrak{a})$ determined by
\begin{gather}
d_0(|0\rangle)=0,\\
[d_0,\partial]=0,\label{ee5800}
\end{gather}
\begin{align}
d_0(e_{i,j}[-1])
&=\sum_{\substack{\col(i)>\col(r)\geq\col(j)}}\limits e_{r,j}[-1]\psi_{i,r}[-1]-\sum_{\substack{\col(j)<\col(r)\leq\col(i)}}\limits \psi_{r,j}[-1]e_{i,r}[-1]\nonumber\\
&\quad+\delta(\col(i)>\col(j))\alpha_{\col(i)}\psi_{i,j}[-2]+\psi_{\hat{i},j}[-1]-\psi_{i,\tilde{j}}[-1].\label{ee1}
\end{align}
Here, we assumed that $e_{p,q}=0$ and $\psi_{p,q}= 0$ for out-of-range subscripts. We also supposed that $X_{\hat{i},j}=0$ and $X_{i,\tilde{j}}=0$ ($X=e,\psi$) if $\hat{i}$ and $\tilde{j}$ do not exist.
\begin{Definition}
The $W$-algebra $\mathcal{W}^k(\mathfrak{gl}(m+n),f)$ is the vertex subalgebra of $V^\kappa(\mathfrak{b})$ defined by
\begin{equation*}
\mathcal{W}^k(\mathfrak{gl}(m+n),f)=\{y\in V^\kappa(\mathfrak{b})\mid d_0(y)=0\}.
\end{equation*}
\end{Definition}
We can prove the following theorem in the same way as Theorem~4.6 in \cite{U7}.
\begin{Theorem}
We set $\gamma_a=\sum_{u=a+1}^{l}\limits \alpha_{u}$. The following elemets of $\bigotimes_{1\leq s\leq l}V^{\kappa_s}(\mathfrak{gl}(q_s))$ are contained in $\mathcal{W}^k(\mathfrak{gl}(N),f)$.
\begin{align*}
W^{(1)}_{p,q}&=\sum_{\substack{1\leq\col(i)=\col(j)\leq l,\\\row(i)=p,\row(j)=q}}e_{i,j}[-1],\\
&\qquad\qquad\text{ for }p>q_{Max}-q_{Min},q>q_{Max}-q_l\text{ or }p>q_{Max}-q_1,q>q_{Max}-q_{Min},\\
W^{(2)}_{p,q}&=\sum_{\substack{\col(i)=\col(j)+1,\\\row(i)=p,\row(j)=q}}e_{i,j}[-1]-\sum_{\substack{1\leq \col(i)=\col(j)\leq l,\\\row(i)=p,\row(j)=q}}\gamma_{r}e_{i,j}[-2]\\
&\quad+\sum_{\substack{\col(i)=\col(u)<\col(j)=\col(v)\\\row(u)=\row(v)>q_{Max}-q_{Min}\\\row(i)=p,\row(j)=q}}\limits e_{u,j}[-1]e_{i,v}[-1]\\
&\quad-\sum_{\substack{\col(i)=\col(u)\geq \col(j)=\col(v)\\q_{Max}-\text{min}(q_{r_1},q_{r_2})<\row(u)=\row(v)\leq q_{Max}-q_{Min}\\\row(i)=p,\row(j)=q}}\limits e^{(r_1)}_{u,j}[-1]e^{(r_2)}_{i,v}[-1]\text{ for }p,q>q_{Max}-q_{Min}.
\end{align*}
\end{Theorem}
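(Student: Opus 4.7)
The plan is to verify directly that $d_0(W^{(1)}_{p,q})=0$ and $d_0(W^{(2)}_{p,q})=0$, closely mirroring the proof of Theorem~4.6 in \cite{U7}. The computations are explicit expansions using formula \eqref{ee1} together with the super-Leibniz rule for the $(-1)$-product, followed by cancellations via telescoping over the column index $\col(i)$.

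For $W^{(1)}_{p,q}$, every summand $e_{i,j}[-1]$ satisfies $\col(i)=\col(j)$. In \eqref{ee1} this forces the two intermediate-column index sets $\{r:\col(i)>\col(r)\geq\col(j)\}$ and $\{r:\col(j)<\col(r)\leq\col(i)\}$ to be empty, while the prefactor $\delta(\col(i)>\col(j))$ kills the $\alpha_{\col(i)}\psi_{i,j}[-2]$ contribution. Hence $d_0(e_{i,j}[-1])=\psi_{\hat{i},j}[-1]-\psi_{i,\tilde{j}}[-1]$. Summing over all admissible columns $s$ with $\col(i)=\col(j)=s$, $\row(i)=p$, $\row(j)=q$, the term $\psi_{\hat{i},j}[-1]$ produced from column $s$ coincides exactly with the term $\psi_{i,\tilde{j}}[-1]$ produced from column $s+1$, so the sum telescopes. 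The hypotheses $p>q_{Max}-q_{Min},\ q>q_{Max}-q_l$ or $p>q_{Max}-q_1,\ q>q_{Max}-q_{Min}$ are designed precisely to guarantee that both indices $i$ and $j$ with rows $p$ and $q$ exist throughout the range of summation, so the boundary contributions vanish by the convention that $\psi_{\hat{i},j}=0$ (resp.\ $\psi_{i,\tilde{j}}=0$) when $\hat{i}$ (resp.\ $\tilde{j}$) does not exist.

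For $W^{(2)}_{p,q}$, applying $d_0$ to the leading piece $\sum e_{i,j}[-1]$ with $\col(i)=\col(j)+1$ produces via \eqref{ee1} three kinds of terms: quadratic $e$-$\psi$ products from intermediate columns, a grade-two contribution $\alpha_{\col(i)}\psi_{i,j}[-2]$, and boundary $\psi_{\hat{i},j}[-1]-\psi_{i,\tilde{j}}[-1]$ contributions. The three correction summands in $W^{(2)}_{p,q}$ are engineered to cancel these. Applying $d_0$ to the $-\gamma_r e_{i,j}[-2]$ sum and using $[d_0,\partial]=0$ yields a $\psi_{i,j}[-2]$ contribution that cancels the grade-two term, because the weighting $\gamma_r=\sum_{u=r+1}^l\alpha_u$ exactly matches the accumulated $\alpha_{\col(i)}$ contributions coming from all column pairs to the right. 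The two quadratic corrections, when expanded via the super-Leibniz rule, produce $\psi$-$e$ products whose column structure is the negative of the intermediate-column contributions from the leading piece. The residual boundary $\psi$ terms then telescope by the same mechanism as in the $W^{(1)}$ case.

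The main obstacle is the combinatorial bookkeeping for $W^{(2)}_{p,q}$: many summands of different bidegrees and column labels appear simultaneously, and they must be grouped by their $\psi$-structure before cancellation becomes visible. One must in particular keep careful track of the super-signs arising from the odd Leibniz rule applied to $(-1)$-products of the form $e_{u,j}[-1]e_{i,v}[-1]$ and $e^{(r_1)}_{u,j}[-1]e^{(r_2)}_{i,v}[-1]$, and verify that the range conditions on $(\row(u),\row(v))$ given by the $q_{Max}-\min(q_{r_1},q_{r_2})<\row(u)=\row(v)\leq q_{Max}-q_{Min}$ constraint line up with those produced by the leading term. Once this bookkeeping is in place the computation proceeds exactly as in \cite[Theorem~4.6]{U7}, the only new feature being the non-monotone shape of the partition $(q_1,\ldots,q_l)$ controlled by the pivot index $v$, which is responsible for the two alternative conditions on $(p,q)$ appearing in the statement.
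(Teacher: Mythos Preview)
Your proposal is correct and follows essentially the same approach as the paper: direct verification that $d_0$ annihilates $W^{(1)}_{p,q}$ and $W^{(2)}_{p,q}$ by applying \eqref{ee1} term by term, using the super-Leibniz rule on the quadratic pieces, and grouping the resulting $\psi$-terms so that they cancel via column-index telescoping (with the $\gamma_r$-weighted $[-2]$ piece absorbing the $\alpha_{\col(i)}\psi_{i,j}[-2]$ contributions). The paper only spells out the $W^{(2)}$ case and organizes the calculation by labeling the six pieces of $d_0(W^{(2)}_{p,q})$ in \eqref{ee3} and evaluating each one separately in \eqref{ee5.1}--\eqref{ee10}, but your conceptual description of which correction term cancels which contribution matches that computation precisely.
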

\begin{proof}
We only show that $W^{(2)}_{p,q}$ is contained in $\mathcal{W}^k(\mathfrak{gl}(N),f)$.
By \eqref{ee1}, we have
\begin{align}
[d_0,e_{i,j}[-1]]
&=\psi_{\widehat{i},j}[-1]-\psi_{i,\widetilde{j}}[-1]\label{ee307}
\end{align}
if $\col(i)=\col(j)$ and 
\begin{align}
&\quad[d_0,e_{i,j}[-1]]\nonumber\\
&=\sum_{\substack{\col(r)=\col(j)}}\limits e_{r,j}[-1]\psi_{i,r}[-1]-\sum_{\substack{\col(r)=\col(i)}}\limits \psi_{r,j}[-1]e_{i,r}[-1]\nonumber\\
&\quad+\alpha_{\col(i)}\psi_{i,j}[-2]+\psi_{\hat{i},j}[-1]-\psi_{i,\tilde{j}}[-1].\label{ee2}
\end{align}
if $\col(i)=\col(j)+1$.
By the definition of $W^{(2)}_{i,j}$, we can rewrite $d_0(W^{(2)}_{p,q})$ as
\begin{align}
&\sum_{\substack{\col(i)=\col(j)+1\\\row(i)=p,\row(j)=q}}d_0(e_{i,j}[-1])-\sum_{\substack{\col(i)=\col(j)\\\row(i)=p,\row(j)=q}}\gamma_{\col(i)}d_0(e_{i,j}[-2])\nonumber\\
&\quad+\sum_{\substack{\col(u)=\col(j)<\col(i)=\col(v)\\\row(u)=\row(v)\leq q_l\\\row(i)=p,\row(j)=q}}\limits d_0(e_{u,j}[-1])e_{i,v}[-1]\nonumber\\
&\quad+\sum_{\substack{\col(u)=\col(j)<\col(i)=\col(v)\\\row(u)=\row(v)\leq q_l\\\row(i)=p,\row(j)=q}}\limits e_{u,j}[-1]d_0(e_{i,v}[-1])\nonumber\\
&\quad-\sum_{\substack{\col(u)=\col(j)\geq\col(i)=\col(v)\\\row(u)=\row(v)>q_l\\\row(i)=p,\row(j)=q}}\limits d_0(e_{u,j}[-1])e_{i,v}[-1]\nonumber\\
&\quad-\sum_{\substack{\col(u)=\col(j)\geq\col(i)=\col(v)\\\row(u)=\row(v)>q_l\\\row(i)=p,\row(j)=q}}\limits e_{u,j}[-1]d_0(e_{i,v}[-1]).\label{ee3}
\end{align}
We denote the $i$-th term of the right hand side of the equation$(\cdot)$ by $(\cdot)_i$. By \eqref{ee2}, we have
\begin{align}
\eqref{ee3}_1
&=\sum_{\substack{\col(i)=\col(j)+1\\\row(i)=p,\row(j)=q}}\sum_{\substack{\col(r)=\col(j)}}\limits e_{r,j}[-1]\psi_{i,r}[-1]-\sum_{\substack{\col(i)=\col(j)+1\\\row(i)=p,\row(j)=q}}\sum_{\substack{\col(r)=\col(i)}}\limits \psi_{r,j}[-1]e_{i,r}[-1]\nonumber\\
&\quad+\sum_{\substack{\col(i)=\col(j)+1\\\row(i)=p,\row(j)=q}}\alpha_{\col(i)}\psi_{i,j}[-2].\label{ee5.1}
\end{align}
By \eqref{ee307} and \eqref{ee5800}, we obtain
\begin{align}
\eqref{ee3}_2
&=-\sum_{\substack{\col(i)=\col(j)\\\row(i)=p,\row(j)=q}}\limits\alpha_{\col(\hat{i})}\psi_{\hat{i},j}[-2].\label{ee6}
\end{align}
By \eqref{ee307}, we obtain
\begin{align}
\eqref{ee3}_3
&=\sum_{\substack{\col(u)+1=\col(j)+1=\col(i)=\col(v)\\\row(u)=\row(v)\leq q_l\\\row(i)=p,\row(j)=q}}\limits \psi_{\hat{u},j}[-1]e_{i,v}[-1]\label{ee7},\\
\eqref{ee3}_4
&=-\sum_{\substack{\col(u)+1=\col(j)+1=\col(i)=\col(v)\\\row(u)=\row(v)\leq q_l\\\row(i)=p,\row(j)=q}}\limits e_{u,j}[-1]\psi_{i,\tilde{v}}[-1],\label{ee8}\\
\eqref{ee3}_5
&=\sum_{\substack{\col(u)=\col(j)=\col(i)+1=\col(v)+1\\q_l<\row(u)=\row(v)\leq q_{\col(j)}\\\row(i)=p,\row(j)=q}}\limits\psi_{u,\tilde{j}}[-1]e_{i,v}[-1],\label{ee9}\\
\eqref{ee3}_6
&=-\sum_{\substack{\col(u)=\col(j)=\col(i)+1=\col(v)+1\\q_l<\row(u)=\row(v)\leq q_{\col(j)}\\\row(i)=p,\row(j)=q}}\limits e_{u,j}[-1]\psi_{\hat{i},v}[-1].\label{ee10}
\end{align}
By adding \eqref{ee5.1}-\eqref{ee10}, we obtain $d_0(W^{(2)}_{i,j})=0$.
\end{proof}
We set the inner product on $\mathfrak{gl}(q_s)$ by
\begin{equation*}
\kappa_s(E_{i,j},E_{p,q})=\delta_{j,p}\delta_{i,q}\alpha_s+\delta_{i,j}\delta_{p,q},
\end{equation*}
where $\alpha_s=k+N-q_s$. Then, by Corollary 5.2 in \cite{Genra}, the $W$-algebra $\mathcal{W}^k(\mathfrak{gl}(N),f)$ can be embedded into $\bigotimes_{1\leq s\leq l}V^{\kappa_s}(\mathfrak{gl}(q_s))$. This embedding is called the Miura map. Hereafter, we denote $E_{i,j}t^{-u}\in U(t^{-1}\mathfrak{gl}(q_s)[t^{-1}])=V^\kappa(\mathfrak{gl}(q_s))$ by $e_{q_{Max}-q_s+i,q_{Max}-q_s+j}[-u]$.
\begin{Corollary}\label{Generators}
We set $\gamma_a=\sum_{u=a+1}^{l}\limits \alpha_{u}$. For positive integers $p,q>q_{Max}-q_{Min}$, we have
\begin{align*}
\mu(W^{(1)}_{p,q})&=\sum_{\substack{1\leq r\leq l}}e^{(r)}_{p,q}[-1],\\
\mu(W^{(2)}_{p,q})&=-\sum_{\substack{1\leq r\leq l}}\gamma_{r}e^{(r)}_{p,q}[-2]+\sum_{\substack{r_1<r_2\\u>q_{Max}-q_{Min}}}\limits e^{(r_1)}_{u,q}[-1]e^{(r_2)}_{p,u}[-1]\\
&\quad-\sum_{\substack{r_1\geq r_2\\q_{Max}-\text{min}(q_{r_1},q_{r_2})<u\leq q_{Max}-q_{Min}}}\limits e^{(r_1)}_{u,q}[-1]e^{(r_2)}_{p,u}[-1],
\end{align*}
where $x^{(s)}=1^{\otimes s-1}\otimes x\otimes1^{l-s}$.
\end{Corollary}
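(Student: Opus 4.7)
The plan is to apply the Miura map $\mu$ (whose explicit form is given in Corollary~5.2 of \cite{Genra}) term-by-term to the expressions for $W^{(1)}_{p,q}$ and $W^{(2)}_{p,q}$ from the preceding theorem, and to repackage each summand under the identification $E_{i,j}t^{-u}$ in the $s$-th tensor factor $\leftrightarrow e_{q_{Max}-q_s+i,q_{Max}-q_s+j}[-u]$. The structural fact I would rely on is that a block-diagonal generator $e_{i,j}$ with $\col(i)=\col(j)=r$ is sent by $\mu$ to $e^{(r)}_{\row(i),\row(j)}$, while strictly lower-block generators (those with $\col(i)>\col(j)$) project to zero on the first-order piece.

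For $W^{(1)}_{p,q}$ the calculation is immediate: its defining sum runs only over the block-diagonal, so each summand maps to $e^{(r)}_{p,q}[-1]$ and summing over $r$ produces the claimed expression.

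For $W^{(2)}_{p,q}$ I would process the four types of summands in turn. The first, $\sum_{\col(i)=\col(j)+1}e_{i,j}[-1]$, is supported in the strict lower-block part and therefore projects to zero. The second, $-\sum_{\col(i)=\col(j)}\gamma_{\col(i)}e_{i,j}[-2]$, is block-diagonal and translates directly into $-\sum_{r}\gamma_{r}e^{(r)}_{p,q}[-2]$. The third and fourth, which are quadratic in $e_{u,j}[-1]e_{i,v}[-1]$ with $\col(u)=\col(i)=r_1$ and $\col(j)=\col(v)=r_2$, split into two block-diagonal pieces sitting in the $r_1$-th and $r_2$-th tensor factors, giving products $e^{(r_1)}_{u,q}[-1]e^{(r_2)}_{p,u}[-1]$ where the summation variable $u$ on the right is the common value of $\row(u)=\row(v)$ on the left.

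The main obstacle will be matching index ranges. The requirement that the original generators $e_{u,j}$ and $e_{i,v}$ actually lie in $\mathfrak{b}$ forces $\row(u)>q_{Max}-q_{r_1}$ and $\row(v)>q_{Max}-q_{r_2}$, so $\row(u)=\row(v)>q_{Max}-\min(q_{r_1},q_{r_2})$; combined with the hypothesis $p,q>q_{Max}-q_{Min}$ and the case split $\col(i)<\col(j)$ (third term) versus $\col(i)\geq\col(j)$ (fourth term), this must be shown to reproduce exactly the ranges $u>q_{Max}-q_{Min}$ and $q_{Max}-\min(q_{r_1},q_{r_2})<u\leq q_{Max}-q_{Min}$ appearing in the corollary. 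I would finally verify that no additional normal-ordering corrections are introduced, which is precisely where the choice $\alpha_s=k+N-q_s$ of the level on each $\mathfrak{gl}(q_s)$-factor enters: it is the value that makes the Miura map a well-defined vertex algebra homomorphism compatible with the block decomposition, ensuring that the quadratic terms in $\mu(W^{(2)}_{p,q})$ come out with no residual OPE contribution beyond the displayed formula.
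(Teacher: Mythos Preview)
Your approach is exactly the paper's: the corollary carries no separate proof in the paper and is obtained by applying the Miura map---the vertex-algebra homomorphism that kills the strictly lower-block generators of $\mathfrak{b}$ and sends a diagonal $e_{i,j}$ with $\col(i)=\col(j)=r$ to $e^{(r)}_{\row(i),\row(j)}$---term by term to the formulas for $W^{(1)}_{p,q}$ and $W^{(2)}_{p,q}$ in the preceding theorem. One bookkeeping slip to fix when you carry this out: your assignment $\col(u)=\col(i)=r_1$, $\col(j)=\col(v)=r_2$ (copied from a typo in the theorem statement) would make $e_{u,j}$ off-block-diagonal, contradicting your own claim that each quadratic factor is diagonal; the theorem's proof and the corollary's target formula show the intended pairing is $\col(u)=\col(j)=r_1$, $\col(i)=\col(v)=r_2$, after which your description goes through verbatim.
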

For the latter proof, we will give some OPEs of $W^{(1)}_{i,j}$ and $W^{(2)}_{p,q}$. The proof is given by a direct computation.
\begin{Lemma}\label{OPE}
The following relations hold:
\begin{align*}
(W^{(1)}_{i,j})_{(0)}W^{(2)}_{p,q}&=\delta(i\leq q_{Max}-q_{Min},j>q_{Max}-q_{Min})(W^{(1)}_{i,q})_{(-1)}W^{(1)}_{p,j}\\
&\quad-\delta(i>q_{Max}-q_{Min},j\leq q_{Max}-q_{Min})(W^{(1)}_{i,q})_{(-1)}W^{(1)}_{p,j},\\
(W^{(1)}_{i,j})_{(1)}W^{(2)}_{p,q}&=-\delta(i>q_{Max}-q_{Min},j\leq q_{Max}-q_{Min})\delta_{p,q}W^{(1)}_{i,j},\\
(W^{(1)}_{i,j})_{(r)}W^{(2)}_{p,q}&=0\text{ for }r\geq2
\end{align*}
for $i\leq q_{Max}-q_{Min},j>q_{Max}-q_{Min}$ or $i>q_{Max}-q_{Min},j\leq q_{Max}-q_{Min}$ satisfying that $i\neq q$ and $j\neq p$.
\end{Lemma}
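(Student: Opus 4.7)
The plan is to verify the identities via the Miura embedding $\mu$: since $\mu$ is an injective vertex algebra homomorphism, it suffices to compute the OPE of $\mu(W^{(1)}_{i,j})$ with $\mu(W^{(2)}_{p,q})$ inside $\bigotimes_{1\leq s\leq l} V^{\kappa_s}(\mathfrak{gl}(q_s))$ using the explicit formulas of Corollary~\ref{Generators} and the affine OPE \eqref{OPE1} componentwise.

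First I would handle the derivative-type summand $-\sum_r\gamma_r e^{(r)}_{p,q}[-2]$. Writing $e^{(r)}_{p,q}[-2]=\partial e^{(r)}_{p,q}[-1]$ and using the translation identity $v_{(n)}\partial = \partial v_{(n)} + n v_{(n-1)}$ together with $(e_{i,j}[-1])_{(0)}e_{p,q}[-1]=[e_{i,j},e_{p,q}][-1]$ and $(e_{i,j}[-1])_{(1)}e_{p,q}[-1]=\kappa_s(e_{i,j},e_{p,q})|0\rangle$, the hypothesis $i\neq q$, $j\neq p$ kills the bracket $[e_{i,j},e_{p,q}]$ and leaves only a possible $(2)$-product piece coming from the $\kappa_s$ pairing. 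For the quadratic summands $e^{(r_1)}_{u,q}[-1]e^{(r_2)}_{p,u}[-1]$, applying the non-commutative Wick formula reduces the problem to the brackets $[e_{i,j},e_{u,q}]=\delta_{j,u}e_{i,q}$ and $[e_{i,j},e_{p,u}]=-\delta_{i,u}e_{p,j}$; under $i\neq q$, $j\neq p$ these collapse the sum over $u$ to at most one term, and the derivation property of $(e^{(s)}_{i,j}[-1])_{(0)}$ reassembles the output as $\mu(W^{(1)}_{i,q})_{(-1)}\mu(W^{(1)}_{p,j})$.

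The case split in the statement, $i\leq q_{Max}-q_{Min}$ versus $i>q_{Max}-q_{Min}$ and analogously for $j$, determines which tensor slots $s$ satisfy the range constraints on $u$ appearing in Corollary~\ref{Generators}; this selects either the $r_1<r_2$ or the $r_1\geq r_2$ piece of $\mu(W^{(2)}_{p,q})$ and produces the opposite signs in the first identity. The $\delta_{p,q}$ factor in the $(1)$-product formula arises from the $\alpha_s$-proportional piece of $\kappa_s(e_{i,j},e_{p,q})$ against the coefficients $\gamma_r$ together with the diagonal summands of the quadratic part, and the potentially surviving $(2)$-product contributions from the derivative and quadratic pieces cancel against each other, yielding vanishing for $r\geq 2$.

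The main obstacle will be the bookkeeping of the index ranges on $u$ and the tensor slot $s$: the two pieces of $\mu(W^{(2)}_{p,q})$ are governed by the different conditions $u>q_{Max}-q_{Min}$ versus $q_{Max}-\min(q_{r_1},q_{r_2})<u\leq q_{Max}-q_{Min}$, and checking that the collected sum matches $\pm\mu(W^{(1)}_{i,q})_{(-1)}\mu(W^{(1)}_{p,j})$ on the nose, rather than up to spurious contributions supported on unintended rows, is where essentially all the combinatorial content of the proof concentrates.
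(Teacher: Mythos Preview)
Your approach---pushing the identity through the injective Miura map and expanding with the affine OPE \eqref{OPE1} and the non-commutative Wick formula---is correct and is exactly the ``direct computation'' the paper asserts without further detail. One small correction to your bookkeeping: under the hypotheses $i\neq q$, $j\neq p$ (and automatically $i\neq j$) one has $\kappa_s(e_{i,j},e_{p,q})=0$, so the derivative summand $-\gamma_r e^{(r)}_{p,q}[-2]$ contributes nothing to any $(n)$-product with $n\geq 0$; the $\delta_{p,q}$ in the $(1)$-product comes entirely from the iterated-bracket term $(a_{(0)}b)_{(0)}c=[[e_{i,j},e_{u,q}],e_{p,u}]|_{u=j}=\delta_{p,q}e_{i,j}$ in the Wick expansion of the diagonal $r_1=r_2$ quadratic summands, and the $(r)$-products for $r\geq 2$ vanish term by term rather than by cancellation between the derivative and quadratic pieces.
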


Let us recall the definition of a universal enveloping algebra of a vertex algebra in the sense of \cite{FZ} and \cite{MNT}.
For any vertex algebra $V$, let $L(V)$ be the Borcherds Lie algebra, that is,
\begin{align}
 L(V)=V{\otimes}\mathbb{C}[t,t^{-1}]/\text{Im}(\partial\otimes\id +\id\otimes\frac{d}{d t})\label{844},
\end{align}
where the commutation relation is given by
\begin{align*}
 [ut^a,vt^b]=\sum_{r\geq 0}\begin{pmatrix} a\\r\end{pmatrix}(u_{(r)}v)t^{a+b-r}
\end{align*}
for all $u,v\in V$ and $a,b\in \mathbb{Z}$. 
\begin{Definition}[Section~6 in \cite{MNT}]\label{Defi}
We set $\mathcal{U}(V)$ as the quotient algebra of the standard degreewise completion of the universal enveloping algebra of $L(V)$ by the completion of the two-sided ideal generated by
\begin{gather}
(u_{(a)}v)t^b-\sum_{i\geq 0}
\begin{pmatrix}
 a\\i
\end{pmatrix}
(-1)^i(ut^{a-i}vt^{b+i}-(-1)^avt^{a+b-i}ut^{i}),\label{241}\\
|0\rangle t^{-1}-1.
\end{gather}
We call $\mathcal{U}(V)$ the universal enveloping algebra of $V$.
\end{Definition}
Induced by the Miura map $\mu$, we obtain the embedding
\begin{equation*}
\widetilde{\mu}\colon \mathcal{U}(\mathcal{W}^{k}(\mathfrak{gl}(N),f))\to {\widehat{\bigoplus}}_{1\leq s\leq l}U(\widehat{\mathfrak{gl}}(q_s)),
\end{equation*}
where ${\widehat{\bigotimes}}_{1\leq s\leq l}U(\widehat{\mathfrak{gl}}(q_s))$ is the standard degreewise completion of $\bigotimes_{1\leq s\leq l}U(\widehat{\mathfrak{gl}}(q_s))$. 

By the definition of $W^{(1)}_{i,j}$ and $W^{(2)}_{i,j}$, we have
\begin{align}
\widetilde{\mu}(W^{(1)}_{i,j}t^s)&=\sum_{1\leq r\leq l}\limits e^{(r)}_{i,j}t^s,\label{W1}\\
\widetilde{\mu}(W^{(2)}_{i,j}t)&=\sum_{r=1}^n\gamma_re^{(r)}_{i,j}+\sum_{s\in\mathbb{Z}}\limits\sum_{r_1<r_2}\sum_{u>q_{Max}-q_{Min}}\limits e^{(r_1)}_{u,j}t^{-s}e^{(r_2)}_{i,u}t^s\nonumber\\
&\quad-\sum_{s\geq0}\limits\sum_{r\geq0}\limits\sum_{q_{Max}-q_r\leq u \leq q_{Max}-q_{Min}}\limits (e^{(r)}_{u,j}t^{-s-1}e^{(r)}_{i,u}t^{s+1}+e^{(r)}_{i,u}t^{-s}e^{(r)}_{u,j}t^{s})\nonumber\\
&\quad-\sum_{s\in\mathbb{Z}}\limits\sum_{r_1<r_2}\sum_{q_{Max}-\text{min}(q_{r_1},q_{r_2})<u\leq q_{Max}-q_{Min}}\limits e^{(r_1)}_{i,u}t^{-s}e^{(r_2)}_{u,j}t^s.\label{W2}
\end{align}
\section{A parabolic induction for a $W$-algebra of type $A$}
Let us recall the parabolic induction for $W$-algebras of type $A$ (see Theorem 6.1 in \cite{Ge}). Let us take two positive integers
\begin{align*}
N_1=q_1+\cdots+q_w,\ N_2=q_{w+1}+\cdots+q_l.
\end{align*}
and $f_1\in\mathfrak{gl}(N_1)$ and $f_2\in\mathfrak{gl}(N_2)$ are nilpotent elements defined by the same way as $f$. Let us denote the Miura map associated with $\mathcal{W}^{k+N_2}(\mathfrak{gl}(N_1),f_1)$ and $\mathcal{W}^{k+N_1}(\mathfrak{gl}(N_2),f_2)$ by $\mu_1$ and $\mu_2$.
\begin{Theorem}[Theorem 6.1 in \cite{Ge}]
There exists a homomorphism
\begin{equation}
\Delta_W\colon\mathcal{W}^{k}(\mathfrak{gl}(N),f)\to\mathcal{W}^{k+N_2}(\mathfrak{gl}(N_1),f_1)\otimes \mathcal{W}^{k+N_1}(\mathfrak{gl}(N_2),f_2)\label{para1}
\end{equation}
satisfying that
\begin{equation}
\mu=(\mu_1\otimes\mu_2)\circ\Delta_W.\label{para}
\end{equation}
\end{Theorem}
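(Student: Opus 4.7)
The plan is to construct $\Delta_W$ by lifting through the Miura embeddings. Since $\mu_1$ and $\mu_2$ are both injective, so is $\mu_1\otimes\mu_2$, and hence equation~\eqref{para} uniquely determines $\Delta_W$ on any element of $\mathcal{W}^k(\mathfrak{gl}(N),f)$ whose $\mu$-image lies in the image of $\mu_1\otimes\mu_2$. The whole task therefore reduces to verifying the inclusion
\begin{equation*}
\mu\bigl(\mathcal{W}^k(\mathfrak{gl}(N),f)\bigr)\subseteq(\mu_1\otimes\mu_2)\bigl(\mathcal{W}^{k+N_2}(\mathfrak{gl}(N_1),f_1)\otimes\mathcal{W}^{k+N_1}(\mathfrak{gl}(N_2),f_2)\bigr).
\end{equation*}
First one checks the compatibility of inner products on the affine factors. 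The target of $\mu$ is $\bigotimes_{s=1}^{l}V^{\kappa_s}(\mathfrak{gl}(q_s))$ with $\alpha_s=k+N-q_s$, whereas $\mu_1$ and $\mu_2$ use $\alpha_s^{(1)}=(k+N_2)+N_1-q_s$ for $s\leq w$ and $\alpha_s^{(2)}=(k+N_1)+N_2-q_s$ for $s>w$. Because $N=N_1+N_2$, both coincide with $\alpha_s$, so under the canonical factorisation $\bigotimes_{s=1}^{l}V^{\kappa_s}(\mathfrak{gl}(q_s))=A\otimes B$ with $A=\bigotimes_{s=1}^{w}V^{\kappa_s}(\mathfrak{gl}(q_s))$ and $B=\bigotimes_{s=w+1}^{l}V^{\kappa_s}(\mathfrak{gl}(q_s))$, the images of $\mu_1$ and $\mu_2$ sit inside exactly the tensor factors appearing in the codomain of $\mu$.

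Next, I would verify the inclusion on the generating set $\{W^{(1)}_{p,q},W^{(2)}_{p,q}\}$ from Corollary~\ref{Generators}. The linear case is immediate: the formula
\begin{equation*}
\mu(W^{(1)}_{p,q})=\sum_{r=1}^{w}e^{(r)}_{p,q}[-1]+\sum_{r=w+1}^{l}e^{(r)}_{p,q}[-1]
\end{equation*}
equals $(\mu_1\otimes\mu_2)(W^{(1,1)}_{p,q}\otimes 1+1\otimes W^{(1,2)}_{p,q})$, where $W^{(1,1)}_{p,q}\in\mathcal{W}^{k+N_2}(\mathfrak{gl}(N_1),f_1)$ and $W^{(1,2)}_{p,q}\in\mathcal{W}^{k+N_1}(\mathfrak{gl}(N_2),f_2)$ are the analogous generators for the two sub-partitions. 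For $W^{(2)}_{p,q}$ I would partition each of the three sums in \eqref{W2} according to whether the indices $r,r_1,r_2$ lie in $\{1,\dots,w\}$ or in $\{w+1,\dots,l\}$. The pure contributions with all indices in one half should reassemble to $\mu_1(W^{(2,1)}_{p,q})\otimes 1$ and $1\otimes\mu_2(W^{(2,2)}_{p,q})$, while the cross terms (those with $r_1\leq w<r_2$ in the first quadratic sum and $r_1>w\geq r_2$ in the second) land naturally in $A\otimes B$ and only need to be identified as lying in the image of $\mu_1\otimes\mu_2$.

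The main obstacle is the bookkeeping for the quadratic generator. The coefficients $\gamma_r=\sum_{u=r+1}^{l}\alpha_u$ appearing in $\mu(W^{(2)}_{p,q})$ for $r\leq w$ differ from their sub-partition analogues $\gamma_r^{(1)}=\sum_{u=r+1}^{w}\alpha_u$ by exactly $\sum_{u=w+1}^{l}\alpha_u$, and the summation thresholds $q_{Max}-q_{Min}$, $q_{Max}^{(1)}-q_{Min}^{(1)}$ and $q_{Max}^{(2)}-q_{Min}^{(2)}$ typically differ as well. The expectation is that these discrepancies are absorbed by normal-ordering the cross-block quadratic terms into (possibly reversed) products of $A$-factors with $B$-factors, using OPE identities in the spirit of Lemma~\ref{OPE}. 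Once the inclusion is established on generators, $\Delta_W$ is well defined, and since it is the restriction of $(\mu_1\otimes\mu_2)^{-1}\circ\mu$ with $\mu$, $\mu_1$, $\mu_2$ all vertex algebra homomorphisms and $\mu_1\otimes\mu_2$ injective, $\Delta_W$ is automatically a vertex algebra homomorphism, yielding both the existence of $\Delta_W$ and the relation \eqref{para}.
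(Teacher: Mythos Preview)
The paper does not prove this statement at all: it is quoted verbatim as Theorem~6.1 of \cite{Ge} and used as a black box. The formulas for $\Delta_W(W^{(1)}_{i,j})$ and $\Delta_W(W^{(2)}_{i,j})$ that appear immediately afterwards are \emph{consequences} of the cited theorem together with the injectivity of $\mu_1\otimes\mu_2$, not ingredients in a proof of it. So there is nothing in the paper to compare your argument against; any comparison would have to be with Genra's original proof, which proceeds via the realisation of the $W$-algebra as the joint kernel of screening operators and is quite different in character from what you outline.

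That said, your proposal has a genuine gap even on its own terms. You plan to verify the inclusion
\[
\mu\bigl(\mathcal{W}^k(\mathfrak{gl}(N),f)\bigr)\subseteq(\mu_1\otimes\mu_2)\bigl(\mathcal{W}^{k+N_2}(\mathfrak{gl}(N_1),f_1)\otimes\mathcal{W}^{k+N_1}(\mathfrak{gl}(N_2),f_2)\bigr)
\]
by checking it on the elements $W^{(1)}_{p,q}$ and $W^{(2)}_{p,q}$ of Corollary~\ref{Generators}, calling them a ``generating set''. But nowhere in the paper is it claimed---and it is not true in general---that $\{W^{(1)}_{p,q},W^{(2)}_{p,q}\}$ generate $\mathcal{W}^k(\mathfrak{gl}(N),f)$ as a vertex algebra. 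A $W$-algebra of this type has strong generators $W^{(r)}$ for $r$ up to the number of columns $l$, and the paper only exhibits the first two because those suffice to describe the image of the affine Yangian under $\Phi$. Checking the inclusion on $W^{(1)}$ and $W^{(2)}$ alone therefore does not establish it on all of $\mathcal{W}^k(\mathfrak{gl}(N),f)$, and the argument as written does not go through. To salvage your approach you would either need explicit formulas for all the $W^{(r)}$ (which the paper does not provide) or an abstract argument bypassing generators altogether, which is essentially what \cite{Ge} does.
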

We call this homomorphism the parabolic induction for a $W$-algebra. By \eqref{para}, we find that
\begin{align*}
\Delta_W(W^{(1)}_{i,j})&=W^{(1)}_{i,j}\otimes 1+1\otimes W^{(1)}_{i,j},\\
\Delta_W(W^{(2)}_{i,j})&=\begin{cases}
W^{(2)}_{i,j}\otimes 1+1\otimes W^{(2)}_{i,j}-\gamma_{w}\partial W^{(1)}_{i,j}\otimes 1\\
\quad-\sum_{q_{Max}-\text{min}(q_1,q_w)< u\leq q_{Max}-q_{Min}}(W^{(1)}_{u,j})_{(-1)}W^{(1)}_{i,u}\otimes 1\\
\quad-\sum_{u> q_{Max}-q_{Min}}(W^{(1)}_{u,j})\otimes W^{(1)}_{i,u}\\
\quad-\sum_{q_{Max}-\text{min}\{q_w,q_{w+1}\}<u\leq q_{Max}-q_{Min}}W^{(1)}_{i,u}\otimes(W^{(1)}_{u,j})\text{ if }q_1\geq q_l,\\
W^{(2)}_{i,j}\otimes 1+1\otimes W^{(2)}_{i,j}-\gamma_{w}\partial W^{(1)}_{i,j}\otimes 1\\
\quad-1\otimes\sum_{q_{Max}-\text{min}(q_{w+1},q_l)< u\leq q_{Max}-q_{Min}}(W^{(1)}_{u,j})_{(-1)}W^{(1)}_{i,u}\\
\quad-\sum_{u> q_{Max}-q_{Min}}(W^{(1)}_{u,j})\otimes W^{(1)}_{i,u}\\
\quad-\sum_{q_{Max}-\text{min}\{q_w,q_{w+1}\}<u\leq q_{Max}-q_{Min}}W^{(1)}_{i,u}\otimes(W^{(1)}_{u,j})\text{ if }q_1< q_l.
\end{cases}
\end{align*}
We also denote by $\Delta_W$ a homomorphism 
\begin{equation*}
\Delta_W\colon\mathcal{U}(\mathcal{W}^{k}(\mathfrak{gl}(N),f))\to\mathcal{U}(\mathcal{W}^{k+N_2}(\mathfrak{gl}(N_1),f_1))\otimes \mathcal{U}(\mathcal{W}^{k+N_1}(\mathfrak{gl}(N_2),f_2)),
\end{equation*}
which is induced by $\Delta_W$ in \eqref{para1}.
\section{A homomorphism from the affine Yangian to the universal enveloping algebra of $\mathcal{W}^{k}(\mathfrak{gl}(N),f)$}
In this section, we will give the another proof to Theorem 6.1 in \cite{U7}. The following theorem is the same as Theorem 6.1 in \cite{U7}. In \cite{U7}, we gave the proof to Theorem~\ref{Main} by a direct computation. 
In this article, we prove it by showing the following theorem. Let us take the permutation of $\sigma\in S_l$ satisfying that
\begin{equation*}
\begin{cases}
q_{\sigma(1)}\geq q_{\sigma(2)}\geq\cdots\geq q_{\sigma(l)},\\
\sigma(i)<\sigma(j)\text{ if }i<j\text{ and }q_i=q_j.
\end{cases}
\end{equation*}
Let us set a homomorphism
Let us set a homomorphism
\begin{align*}
\Delta^l&=(\prod_{s=1}^{l-1}\Delta^{q_{\sigma(s)},q_{\sigma(s+1)}}\otimes\id^{l-s-1})\colon Y_{\hbar,\ve}(\widehat{\mathfrak{sl}}(q_{Min}))\to\bigotimes_{1\leq i\leq l}Y_{\hbar,\ve-\hbar(q_{\sigma(i)-q_{Min}}}(\widehat{\mathfrak{sl}}(q_{\sigma(i)}))
\end{align*}
where we set
\begin{equation*}
\Delta^{q_{\sigma(s)},q_{\sigma(s+1)}}=\begin{cases}
(\Psi^{q_{\sigma(s+1)},q_{\sigma(s)}}\otimes1)\circ\Delta^+\text{ if }\sigma(s)<\sigma(s+1),\\
(\Psi^{q_{\sigma(s+1)},q_{\sigma(s)}}\otimes1)\circ\Delta^-\text{ if }\sigma(s)>\sigma(s+1).
\end{cases}
\end{equation*}

We also define a natural homomorphism
\begin{equation*}
\Sigma\colon \bigotimes_{1\leq i\leq l}Y_{\hbar,\ve-\hbar(q_{\sigma(i)}-q_{Min})}(\widehat{\mathfrak{sl}}(q_{\sigma(i)}))\to\bigotimes_{1\leq i\leq l} Y_{\hbar,\ve-\hbar(q_{i}-q_{Min})}(\widehat{\mathfrak{sl}}(q_{i})).
\end{equation*}
Let us set $\Phi(X^\pm_{i,0}),\Phi(H_{i,1})\in\mathcal{U}(\mathcal{W}^k(\mathfrak{gl}(N),f))$ as
\begin{gather*}
\Phi(X^+_{i,0})=\begin{cases}
W^{(1)}_{q_{Max},q_{Max}-q_{Min}+1}t&\text{ if }i=0,\\
W^{(1)}_{q_{Max}-q_{Min}+i,q_{Max}-q_{Min}+i+1}&\text{ if }i\neq0,
\end{cases}\\
\Phi(X^-_{i,0})=\begin{cases}
W^{(1)}_{q_{Max}-q_{Min}+1,q_{Max}}t^{-1}&\text{ if }i=0,\\
W^{(1)}_{q_{Max}-q_{Min}+i+1,q_{Max}-q_{Min}+i}&\text{ if }i\neq0,
\end{cases}
\end{gather*}
and
\begin{align*}
\Phi(H_{i,1})&=
-\hbar(W^{(2)}_{q_{Max}-q_{Min}+i,q_{Max}-q_{Min}+i}t-W^{(2)}_{q_{Max}-q_{Min}+i+1,q_{Max}-q_{Min}+i+1}t)\\
&\quad-\dfrac{i}{2}\hbar(W^{(1)}_{q_{Max}-q_{Min}+i,q_{Max}-q_{Min}+i}-W^{(1)}_{q_{Max}-q_{Min}+i+1,q_{Max}-q_{Min}+i+1})\\
&\quad+\hbar W^{(1)}_{q_{Max}-q_{Min}+i,q_{Max}-q_{Min}+i}W^{(1)}_{q_{Max}-q_{Min}+i+1,q_{Max}-q_{Min}+i+1}\\
&\quad+\hbar\displaystyle\sum_{s \geq 0}  \limits\displaystyle\sum_{u=1}^{i}\limits W^{(1)}_{q_{Max}-q_{Min}+i,q_{Max}-q_{Min}+u}t^{-s}W^{(1)}_{q_{Max}-q_{Min}+u,q_{Max}-q_{Min}+i}t^s\\
&\quad+\hbar\displaystyle\sum_{s \geq 0} \limits\displaystyle\sum_{u=i+1}^{n}\limits W^{(1)}_{q_{Max}-q_{Min}+i,q_{Max}-q_{Min}+u}t^{-s-1} W^{(1)}_{q_{Max}-q_{Min}+u,q_{Max}-q_{Min}+i}t^{s+1}\\
&\quad-\hbar\displaystyle\sum_{s \geq 0}\limits\displaystyle\sum_{u=1}^{i}\limits W^{(1)}_{q_{Max}-q_{Min}+i+1,q_{Max}-q_{Min}+u}t^{-s} W^{(1)}_{q_{Max}-q_{Min}+u,q_{Max}-q_{Min}+i+1}t^s\\
&\quad-\hbar\displaystyle\sum_{s \geq 0}\limits\displaystyle\sum_{u=i+1}^{n} \limits W^{(1)}_{q_{Max}-q_{Min}+i+1,q_{Max}-q_{Min}+u}t^{-s-1} W^{(1)}_{q_{Max}-q_{Min}+u,q_{Max}-q_{Min}+i+1}t^{s+1}
\end{align*}
for $i\neq0$.
\begin{Theorem}\label{hojo}
We obtain the following relation:
\begin{equation*}
\bigotimes_{1\leq s\leq l}\ev_{\hbar,\ve-(q_s-q_{Min})\hbar}^{q_s,(\gamma_s-\frac{q_s-q_{Min}}{2})\hbar}\circ\Sigma\circ\Delta^l=\widetilde{\mu}\circ\Phi.
\end{equation*}
\end{Theorem}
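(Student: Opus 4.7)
Since both $\bigotimes_{s}\ev\circ\Sigma\circ\Delta^l$ and $\widetilde{\mu}\circ\Phi$ are algebra homomorphisms from $Y_{\hbar,\ve}(\widehat{\mathfrak{sl}}(q_{Min}))$ into $\widehat{\bigotimes}_{s}U(\widehat{\mathfrak{gl}}(q_s))$, and the affine Yangian is generated by $X^\pm_{i,0}$ ($0\le i\le q_{Min}-1$) and $\widetilde{H}_{j,1}$ ($1\le j\le q_{Min}-1$) (as observed after Definition~\ref{Prop32}), it suffices to verify the identity on these generators. The plan is to unfold $\Delta^l$ by repeatedly applying the explicit formulas for $\Delta^\pm$ and for $\Psi^{m,m+n}$ given in Theorem~\ref{e}, apply the evaluation maps slot by slot via Theorem~\ref{thm:main}, and then match the resulting expression term by term with $\widetilde{\mu}\Phi$ using the formulas \eqref{W1} and \eqref{W2} for the Miura images of $W^{(1)}$ and $W^{(2)}$.

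\textbf{Chevalley generators.} Each step $\Delta^{q_{\sigma(s)},q_{\sigma(s+1)}}=(\Psi^{q_{\sigma(s+1)},q_{\sigma(s)}}\otimes 1)\circ\Delta^\pm$ sends the degree-zero generators $X^\pm_{i,0}$ to primitive sums, because $\Delta^\pm$ is additive on $X^\pm_{i,0}$ and $\Psi^{q_{\sigma(s+1)},q_{\sigma(s)}}$ maps $X^\pm_{i,0}$ to the root vector of the ``lower-right'' Levi-like subalgebra of $\widehat{\mathfrak{sl}}(q_{\sigma(s)})$, whose row index is shifted by $q_{\sigma(s)}-q_{\sigma(s+1)}$. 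Iterating and post-composing with $\Sigma$ gives a telescoping sum $\sum_{r=1}^{l}1^{\otimes r-1}\otimes X^\pm_{i,0}(r)\otimes 1^{\otimes l-r}$, where $X^\pm_{i,0}(r)$ is the generator in $Y(\widehat{\mathfrak{sl}}(q_r))$ whose row index is shifted by $q_r-q_{Min}$. Applying $\bigotimes_s\ev^{q_s}$ yields $\sum_r e^{(r)}_{p,q}t^{\bullet}$ with $p=q_{Max}-q_{Min}+i$, $q=p+1$ (and $t^{\pm1}$ in the $i=0$ case), which by \eqref{W1} equals $\widetilde{\mu}(\Phi(X^\pm_{i,0}))$.

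\textbf{Cartan generators.} The main calculation is on $\widetilde{H}_{i,1}$. Using
\[
\Delta^\pm(\widetilde{H}_{i,1})=\widetilde{H}_{i,1}\otimes 1+1\otimes\widetilde{H}_{i,1}+\widetilde{A}^\pm_i-\widetilde{A}^\pm_{i+1}
\]
together with $\Psi^{m,m+n}(\widetilde{H}_{i,1})=\widetilde{H}_{i+n,1}+(\text{correction})$ from Theorem~\ref{e}, one unfolds $\Delta^l(\widetilde{H}_{i,1})$ as a sum of three families: \emph{diagonal} terms $1^{\otimes s-1}\otimes\widetilde{H}_{i+(q_s-q_{Min}),1}\otimes 1^{\otimes l-s}$; \emph{local corrections} in a single slot coming from the $\hbar\sum E_{k,\cdot}t^{-\bullet}E_{\cdot,k}t^\bullet$ tails of $\Psi^{m,m+n}(\widetilde{H}_{j,1})$; and \emph{cross terms} in two distinct slots produced by the $\widetilde{A}^\pm_j-\widetilde{A}^\pm_{j+1}$ contributions of $\Delta^\pm$. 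Applying $\bigotimes_s\ev^{q_s,a_s}_{\hbar,\ve-(q_s-q_{Min})\hbar}$ with $a_s=(\gamma_s-\tfrac{q_s-q_{Min}}{2})\hbar$ and invoking the formula for $\ev(H_{j,1})$ in Theorem~\ref{thm:main}: the diagonal terms produce the scalar pieces $-\sum_r\gamma_r e^{(r)}_{p,q}$ and $-\tfrac{i}{2}\hbar(W^{(1)}_{\bullet,\bullet}-W^{(1)}_{\bullet+1,\bullet+1})$ visible in \eqref{W2} and in $\Phi(H_{i,1})$, plus the same-slot quadratic sums $\hbar\sum e^{(r)}_{\cdot,\cdot}t^{-s}e^{(r)}_{\cdot,\cdot}t^s$; together with the local corrections these reassemble into the $q_{Max}-q_r\le u\le q_{Max}-q_{Min}$ range of \eqref{W2}. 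The cross terms match the $r_1\ne r_2$ sums in \eqref{W2} and the $W^{(1)}W^{(1)}$-products in $\Phi(H_{i,1})$ via \eqref{W1} applied factor by factor. Lemma~\ref{OPE} confirms that no additional normal-ordering corrections appear when the quadratic expressions in $\Phi(H_{i,1})$ are pushed through $\widetilde{\mu}$.

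\textbf{Main obstacle.} The heart of the argument is purely bookkeeping, but it is delicate: one must simultaneously track (i) the cumulative spectral shift $\ve\mapsto\ve-(q_s-q_{Min})\hbar$ slot by slot, (ii) the parameter $a_s=(\gamma_s-\tfrac{q_s-q_{Min}}{2})\hbar$ which is engineered precisely so that the scalar output of $\ev(H_{j,1})$ reproduces the $\gamma_r$-weights in \eqref{W2}, (iii) the row-index shift $i\mapsto i+q_{Max}-q_{Min}$ in the Miura picture against the slot-dependent shift $i\mapsto i+q_s-q_{Min}$ of the evaluation side, and (iv) the telescoping $\widetilde{A}^\pm_i-\widetilde{A}^\pm_{i+1}$ which is exactly what produces the difference $W^{(2)}_{\bullet+i,\bullet+i}t-W^{(2)}_{\bullet+i+1,\bullet+i+1}t$ required by the definition of $\Phi(H_{i,1})$. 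The distinction between the $\Delta^+$ and $\Delta^-$ variants in the definition of $\Delta^{q_{\sigma(s)},q_{\sigma(s+1)}}$ (depending on whether $\sigma(s)<\sigma(s+1)$ or not) is what ensures that the cross terms land in the correct ordered pair of slots to match the $r_1<r_2$ versus $r_1\ge r_2$ split of \eqref{W2}; verifying this ordering in all cases is the single most intricate part of the verification.
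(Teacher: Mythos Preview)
Your proposal is correct and follows essentially the same route as the paper: reduce to the generating set $X^\pm_{i,0}$, $H_{i,1}$ (or equivalently $\widetilde{H}_{i,1}$), dispose of the Chevalley generators by the primitivity of $\Delta^\pm$ together with the row-shift built into $\Psi^{m,m+n}$, and for the Cartan generators decompose $\Delta^l(H_{i,1})$ into exactly the three families you name---the paper writes these as $\sum_s H^{(s)}_{i+q_s-q_l}$, $B_i$ (your ``cross terms'' from $\widetilde{A}^\pm$), and $C_i$ (your ``local corrections'' from the tails of $\Psi^{m,m+n}$)---then applies the evaluation maps and matches against the Miura image \eqref{W2} by an explicit eight-piece regrouping. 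One small remark: the appeal to Lemma~\ref{OPE} is unnecessary here, since the quadratic terms in $\Phi(H_{i,1})$ involve only $W^{(1)}$'s and their Miura images are handled directly by \eqref{W1}; the paper does not invoke Lemma~\ref{OPE} in this proof.
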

Since $\widetilde{\mu}$ is an injective homomorphism (\cite{Genra}) and $\ev_{\hbar,\ve+(q_s-q_{Min})\hbar}^{q_s,\gamma_s\hbar}$, $\Delta^l$ and $\widetilde{\mu}$ are homomorphisms, we obtain the following theorem.
\begin{Theorem}\label{Main}
Assume that $q_{Min}\geq 3$ and $\ve=\hbar(k+N-q_{Min})$. The definition of $\Phi$ induces a homomorphism 
\begin{equation*}
\Phi\colon Y_{\hbar,\ve}(\widehat{\mathfrak{sl}}(q_{Min}))\to \mathcal{U}(\mathcal{W}^{k}(\mathfrak{gl}(N),f))
\end{equation*} 
\end{Theorem}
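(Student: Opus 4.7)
The plan is to deduce Theorem~\ref{Main} directly from Theorem~\ref{hojo} together with the injectivity of the Miura map. The formulas defining $\Phi(X^\pm_{i,0})$ and $\Phi(H_{i,1})$ manifestly lie inside $\mathcal{U}(\mathcal{W}^{k}(\mathfrak{gl}(N),f))$, because each $W^{(1)}_{p,q}$ and $W^{(2)}_{p,q}$ belongs to $\mathcal{W}^{k}(\mathfrak{gl}(N),f)$ by the preceding theorem, and so the corresponding $t$-translates and products belong to $\mathcal{U}(\mathcal{W}^{k}(\mathfrak{gl}(N),f))$. As noted immediately after Definition~\ref{Prop32}, the elements $X^{\pm}_{i,0}$ ($0\leq i\leq q_{Min}-1$) and $H_{j,1}$ ($1\leq j\leq q_{Min}-1$) generate the affine Yangian $Y_{\hbar,\ve}(\widehat{\mathfrak{sl}}(q_{Min}))$, so $\Phi$ is fully determined by its values on these generators. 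The task is thus reduced to verifying that every defining relation \eqref{Eq2.1}--\eqref{Eq2.10} is sent to zero by $\Phi$.

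The key observation is that the right-hand side in Theorem~\ref{hojo},
\[
\bigotimes_{1\leq s\leq l}\ev_{\hbar,\ve-(q_s-q_{Min})\hbar}^{q_s,(\gamma_s-\tfrac{q_s-q_{Min}}{2})\hbar}\circ\Sigma\circ\Delta^l,
\]
is itself a composition of algebra homomorphisms and therefore is an algebra homomorphism from $Y_{\hbar,\ve}(\widehat{\mathfrak{sl}}(q_{Min}))$ to the completed tensor product $\widehat{\bigotimes}_{1\leq s\leq l}U(\widehat{\mathfrak{gl}}(q_s))$. Indeed, $\Delta^l$ is assembled from iterated applications of the coproducts $\Delta^{\pm}$ (Section~3) and of the maps $\Psi^{m,m+n}$ (Theorem~\ref{e}), each of which is an algebra homomorphism; $\Sigma$ is a homomorphism of tensor products by construction; and each $\ev_{\hbar,\ve'}^{q_s,a_s}$ is an algebra homomorphism by Theorem~\ref{thm:main}. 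The hypothesis $\ve=\hbar(k+N-q_{Min})$ is precisely what is needed so that the central-charge condition $\tilde{c}=\ve'/\hbar$ required by Theorem~\ref{thm:main} in the $s$-th tensor factor matches the level $\alpha_s=k+N-q_s$ governing $V^{\kappa_s}(\mathfrak{gl}(q_s))$, while $q_{Min}\geq 3$ ensures that Definition~\ref{Prop32} applies to the source algebra.

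With these ingredients in place the conclusion is essentially formal. For any defining relation $R=0$ of $Y_{\hbar,\ve}(\widehat{\mathfrak{sl}}(q_{Min}))$, the composition above sends $R$ to zero; by Theorem~\ref{hojo} this value equals $\widetilde{\mu}(\Phi(R))$, and since $\widetilde{\mu}$ is injective by Corollary~5.2 of \cite{Genra} (already invoked in the excerpt) we deduce $\Phi(R)=0$ in $\mathcal{U}(\mathcal{W}^{k}(\mathfrak{gl}(N),f))$. Therefore $\Phi$ descends to a well-defined algebra homomorphism $Y_{\hbar,\ve}(\widehat{\mathfrak{sl}}(q_{Min}))\to\mathcal{U}(\mathcal{W}^{k}(\mathfrak{gl}(N),f))$. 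The entire burden of the argument rests on Theorem~\ref{hojo}: the main obstacle is to match, generator by generator, the explicit formulas defining $\Phi(X^{\pm}_{i,0})$ and $\Phi(H_{i,1})$ with the image of the corresponding generators under $\bigotimes_s\ev\circ\Sigma\circ\Delta^l$, and this is where all the combinatorial work is concentrated; once that identity is established, Theorem~\ref{Main} follows immediately.
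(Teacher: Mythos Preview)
Your proposal is correct and follows exactly the paper's own argument: the paper deduces Theorem~\ref{Main} in one sentence from Theorem~\ref{hojo}, observing that the left-hand side is a composition of the homomorphisms $\ev$, $\Sigma$, $\Delta^l$ and that $\widetilde{\mu}$ is injective by \cite{Genra}, whence $\Phi$ must preserve all relations. Your write-up is simply a more detailed spelling-out of that one sentence, including the explicit check that the hypotheses $q_{Min}\geq 3$ and $\ve=\hbar(k+N-q_{Min})$ are exactly what make Definition~\ref{Prop32} and the central-charge condition in Theorem~\ref{thm:main} applicable.
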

\begin{Remark}
We note that the Definition~\ref{Prop32} is different from the one in \cite{U7}. The parameter $\ve$ in this article is equal to $-n\hbar-\ve$ in \cite{U7}. This is the reason that the assumption of $\ve$ is different from the one in \cite{U7}.
\end{Remark}
The rest of this section is devoted to Theorem~\ref{hojo}.
\begin{proof}[The proof of Theorem~\ref{hojo}]
In order to simplify the notation, we only show the case that $q_1\geq q_2\geq \cdots\geq q_l$. The other cases can be proven in a similar way.

It is enough to show the relations:
\begin{equation}
\bigotimes_{1\leq s\leq l}\ev_{\hbar,\ve-(q_s-q_{Min})\hbar}^{q_s,(\gamma_s-\frac{q_s-q_{Min}}{2})\hbar}\circ\Delta^l(X^\pm_{j,0})=\widetilde{\mu}\circ\Phi(X^\pm_{j,0})\label{111}
\end{equation}
for $0\leq j\leq n-1$ and
\begin{equation}
\bigotimes_{1\leq s\leq l}\ev_{\hbar,\ve-(q_s-q_{Min})\hbar}^{q_s,(\gamma_s-\frac{q_s-q_{Min}}{2})\hbar}\circ\Delta^l(H_{i,1})=\widetilde{\mu}\circ\Phi(H_{i,1})\label{112}
\end{equation}
for $1\leq i\leq n-1$. It is trivial that \eqref{111} holds. We only show the relation \eqref{112}. For $A\in\widetilde{Y}_{\hbar,\ve}(\widehat{\mathfrak{sl}}(n)),\mathcal{U}(\widehat{\mathfrak{gl}}(q_s))$, we denote $A^{(s)}=1^{\otimes s-1}\otimes A\otimes 1^{\otimes l-s}$.
By the definition of $\Psi^{n,m+n}$ and $\Delta^n$, we obtain
\begin{align*}
\Delta^l(H_{i,1})&=\sum_{1\leq s\leq l}\limits H^{(s)}_{i+q_s-q_l}+B_i+C_i,
\end{align*}
where we set
\begin{align*}
B_i&=-\hbar\sum_{r_1<r_2}\limits E^{(r_1)}_{q_{r_1}-q_l+i,q_{r_1}-q_l+i}E^{(r_2)}_{q_{r_2}-q_l+i+1,q_{r_2}-q_l+i+1}\\
&\quad-\hbar\sum_{r_1<r_2}\limits E^{(r_1)}_{q_{r_1}-q_l+i+1,q_{r_1}-q_l+i+1}E^{(r_2)}_{q_{r_2}-q_l+i,q_{r_2}-q_l+i}\\
&\quad-\hbar\sum_{r_1<r_2}\limits\displaystyle\sum_{s \geq 0}  \limits\displaystyle\sum_{u=1}^{q_{r_2}-q_l+i}\limits E^{(r_1)}_{q_{r_1}-q_{r_2}+u,q_{r_1}-q_l+i}t^{-s-1}E^{(r_2)}_{q_{r_2}-q_l+i,u}t^{s+1}\\
&\quad+\hbar\sum_{r_1<r_2}\limits\displaystyle\sum_{s \geq 0}  \limits\displaystyle\sum_{u=1}^{q_{r_2}-q_l+i}\limits E^{(r_1)}_{q_{r_1}-q_l+i,q_{r_1}-q_{r_2}+u}t^{-s}E^{(r_2)}_{u,q_{r_2}-q_l+i}t^s\\
&\quad-\hbar\sum_{r_1<r_2}\limits\displaystyle\sum_{s \geq 0} \limits\displaystyle\sum_{u=q_{r_2}-q_l+i+1}^{q_{r_2}}\limits E^{(r_1)}_{q_{r_1}-q_{r_2}+u,q_{r_1}-q_l+i}t^{-s}E^{(r_2)}_{q_{r_2}-q_l+i,u}t^{s}\\
&\quad+\hbar\sum_{r_1<r_2}\limits\displaystyle\sum_{s \geq 0} \limits\displaystyle\sum_{u=q_{r_2}-q_l+i+1}^{q_{r_2}}\limits E^{(r_1)}_{q_{r_1}-q_l+i,q_{r_1}-q_{r_2}+u}t^{-s-1}E^{(r_2)}_{u,q_{r_2}-q_l+i}t^{s+1}\\
&\quad+\hbar\sum_{r_1<r_2}\limits\displaystyle\sum_{s \geq 0}\limits\displaystyle\sum_{u=1}^{q_{r_2}-q_l+i}\limits E^{(r_1)}_{q_{r_1}-q_{r_2}+u,q_{r_1}-q_l+i+1}t^{-s-1}E^{(r_2)}_{q_{r_2}-q_l+i+1,u}t^{s+1}\\
&\quad-\hbar\sum_{r_1<r_2}\limits\displaystyle\sum_{s \geq 0}\limits\displaystyle\sum_{u=1}^{q_{r_2}-q_l+i}\limits E^{(r_1)}_{q_{r_1}-q_l+i+1,q_{r_1}-q_{r_2}+u}t^{-s}E^{(r_2)}_{u,q_{r_2}-q_l+i+1}t^s\\
&\quad+\hbar\sum_{r_1<r_2}\limits\displaystyle\sum_{s \geq 0}\limits\displaystyle\sum_{u=q_{r_2}-q_l+i+1}^{q_{r_2}} \limits E^{(r_1)}_{q_{r_1}-q_{r_2}+u,q_{r_1}-q_{l}+i+1}t^{-s} E^{(r_2)}_{q_{r_2}-q_l+i+1,u}t^{s}\\
&\quad-\hbar\sum_{r_1<r_2}\limits\displaystyle\sum_{s \geq 0}\limits\displaystyle\sum_{u=q_{r_2}-q_l+i+1}^{q_{r_2}} \limits E^{(r_1)}_{q_{r_1}-q_l+i+1,q_{r_1}-q_{r_2}+u}t^{-s-1}E^{(r_2)}_{u,q_{r_2}-q_l+i+1}t^{s+1}
\end{align*}
and
\begin{align*}
C_i&=\hbar\sum_{r=1}^l\displaystyle\sum_{s \geq 0}\limits\sum_{u=1}^{q_r-q_l} E^{(r)}_{u,q_r-q_l+i}t^{-s-1}E^{(r)}_{q_r-q_l+i,u}t^{s+1}\\
&\quad-\hbar\displaystyle\sum_{s \geq 0}\limits\sum_{u=1}^{q_r-q_l} E^{(r)}_{u,q_r-q_l+i+1}t^{-s-1} E^{(r)}_{q_r-q_l+i+1,u}t^{s+1}\\
&\quad+\hbar\displaystyle\sum_{r_1<r_2}\sum_{s \geq 0}\limits\sum_{u=1}^{q_{r_2}-q_l} E^{(r_1)}_{q_{r_1}-q_{r_2}+u,q_{r_1}-q_l+i}t^{-s-1}E^{(r_2)}_{q_{r_2}-q_l+i,u}t^{s+1}\\
&\quad-\hbar\sum_{r_1<r_2}\displaystyle\sum_{s \geq 0}\limits\sum_{u=1}^{q_{r_2}-q_l} E^{(r_1)}_{q_{r_1}-q_{r_2}+u,q_{r_1}-q_l+i+1}t^{-s-1} E^{(r_2)}_{q_{r_2}-q_l+i+1,u}t^{s+1}\\
&\quad+\hbar\sum_{r_1<r_2}\displaystyle\sum_{s \geq 0}\limits\sum_{u=1}^{q_{r_2}-q_l} E^{(r_2)}_{u,q_{r_2}-q_l+i}t^{-s-1} E^{(r_1)}_{q_{r_1}-q_l+i,q_{r_1}-q_{r_2}+u}t^{s+1}\\
&\quad-\hbar\sum_{r_1<r_2}\displaystyle\sum_{s \geq 0}\limits\sum_{u=1}^{q_{r_2}-q_l} E^{(r_2)}_{u,q_{r_2}-q_l+i+1}t^{-s-1} E^{(r_1)}_{q_{r_1}-q_l+i+1,q_{r_1}-q_{r_2}+u}t^{s+1}.
\end{align*}
We note that $B_i$ comes from the coproduct for the affine Yangian and $C_i$ comes from the homomorphism $\Psi^{m.m+n}$.

By the definition of the evaluation map, we obtain
\begin{align}
&\quad\bigotimes_{1\leq s\leq l}\ev_{\hbar,\ve-(q_s-q_{Min})\hbar}^{q_s,(\gamma_s-\frac{q_s-q_{Min}}{2})\hbar}(\bigotimes_{1\leq s\leq l}H^{(s)}_{i+q_s-q_l})\nonumber\\
&=\sum_{r=1}^l\limits(\gamma_r-\dfrac{i+q_s-q_l}{2}\hbar)(e^{(r)}_{q_1-q_l+i,q_1-q_l+i} -e^{(r)}_{q_1-q_l+i+1,q_1-q_l+i+1})\nonumber\\
&\quad-\sum_{r=1}^l\limits\hbar e^{(r)}_{q_1-q_l+i,q_1-q_l+i}e^{(r)}_{q_1-q_l+i+1,q_1-q_l+i+1} \nonumber\\
&\quad+ \hbar \sum_{r=1}^l\limits\displaystyle\sum_{s \geq 0}  \limits\displaystyle\sum_{u=q_1-q_r+1}^{q_1-q_l+i}\limits  e^{(r)}_{q_1-q_l+i,u}t^{-s}e^{(r)}_{u,q_1-q_l+i}t^s\nonumber\\
&\quad+\hbar\sum_{r=1}^l\limits \displaystyle\sum_{s \geq 0} \limits\displaystyle\sum_{u=q_1-q_l+i+1}^{q_1}\limits  e^{(r)}_{q_1-q_l+i,u}t^{-s-1}e^{(r)}_{u,q_1-q_l+i}t^{s+1}\nonumber\\
&\quad-\hbar\sum_{r=1}^l\limits\displaystyle\sum_{s \geq 0}\limits\displaystyle\sum_{u=q_1-q_r+1}^{q_1-q_l+i}\limits e^{(r)}_{q_1-q_l+i+1,u}t^{-s} e^{(r)}_{u,q_1-q_l+i+1}t^{s}\nonumber\\
&\quad-\hbar\sum_{r=1}^l\limits\displaystyle\sum_{s \geq 0}\limits\displaystyle\sum_{u=q_1-q_l+i+1}^{q_1} \limits e^{(r)}_{q_1-q_l+i+1,u}t^{-s-1} e^{(r)}_{u,q_1-q_l+i+1}t^{s+1},\label{551}\\
&\quad\bigotimes_{1\leq s\leq l}\ev_{\hbar,\ve+(q_s-q_l)\hbar}^{q_s,(\gamma_s-\frac{q_s-q_{Min}}{2})\hbar}(B_i)\nonumber\\
&=-\hbar\sum_{r_1<r_2}\limits e^{(r_1)}_{q_1-q_l+i,q_1-q_l+i}e^{(r_2)}_{q_1-q_l+i+1,q_1-q_l+i+1}-\hbar\sum_{r_1<r_2}\limits e^{(r_1)}_{q_1-q_l+i+1,q_1-q_l+i+1}e^{(r_2)}_{q_1-q_l+i,q_1-q_l+i}\nonumber\\
&\quad-\hbar\sum_{r_1<r_2}\limits\displaystyle\sum_{s \geq 0}  \limits\displaystyle\sum_{u=1}^{q_{r_2}-q_l+i}\limits e^{(r_1)}_{q_{1}-q_{r_2}+u,q_{1}-q_l+i}t^{-s-1}e^{(r_2)}_{q_{1}-q_{r_2}+i,u}t^{s+1}\nonumber\\
&\quad+\hbar\sum_{r_1<r_2}\limits\displaystyle\sum_{s \geq 0}  \limits\displaystyle\sum_{u=1}^{q_{r_2}-q_l+i}\limits e^{(r_1)}_{q_{1}-q_l+i,q_{1}-q_{r_2}+u}t^{-s}e^{(r_2)}_{q_{1}-q_{r_2}+u,q_{1}-q_l+i}t^s\nonumber\\
&\quad-\hbar\sum_{r_1<r_2}\limits\displaystyle\sum_{s \geq 0} \limits\displaystyle\sum_{u=q_{r_2}-q_l+i+1}^{q_{r_2}}\limits e^{(r_1)}_{q_{1}-q_{r_2}+u,q_{1}-q_l+i}t^{-s}e^{(r_2)}_{q_{1}-q_l+i,q_{1}-q_{r_2}+u}t^{s}\nonumber\\
&\quad+\hbar\sum_{r_1<r_2}\limits\displaystyle\sum_{s \geq 0} \limits\displaystyle\sum_{u=q_{r_2}-q_l+i+1}^{q_{r_2}}\limits e^{(r_1)}_{q_{1}-q_l+i,q_{1}-q_{r_2}+u}t^{-s-1}e^{(r_2)}_{q_{1}-q_{r_2}+u,q_{1}-q_l+i}t^{s+1}\nonumber\\
&\quad+\hbar\sum_{r_1<r_2}\limits\displaystyle\sum_{s \geq 0}\limits\displaystyle\sum_{u=1}^{q_{r_2}-q_l+i}\limits e^{(r_1)}_{q_{1}-q_{r_2}+u,q_{1}-q_l+i+1}t^{-s-1}e^{(r_2)}_{q_{1}-q_l+i+1,q_{1}-q_{r_2}+u}t^{s+1}\nonumber\\
&\quad-\hbar\sum_{r_1<r_2}\limits\displaystyle\sum_{s \geq 0}\limits\displaystyle\sum_{u=1}^{q_{r_2}-q_l+i}\limits e^{(r_1)}_{q_{1}-q_l+i+1,q_{1}-q_{r_2}+u}t^{-s}e^{(r_2)}_{q_{1}-q_{r_2}+u,q_{1}-q_l+i+1}t^s\nonumber\\
&\quad+\hbar\sum_{r_1<r_2}\limits\displaystyle\sum_{s \geq 0}\limits\displaystyle\sum_{u=q_{r_2}-q_l+i+1}^{q_{r_2}} \limits e^{(r_1)}_{q_{1}-q_{r_2}+u,q_{1}-q_l+i+1}t^{-s} e^{(r_2)}_{q_{1}-q_l+i+1,q_{1}-q_{r_2}+u}t^{s}\nonumber\\
&\quad-\hbar\sum_{r_1<r_2}\limits\displaystyle\sum_{s \geq 0}\limits\displaystyle\sum_{u=q_{r_2}-q_l+i+1}^{q_{r_2}} \limits e^{(r_1)}_{q_{1}-q_l+i+1,q_{1}-q_{r_2}+u}t^{-s-1}e^{(r_2)}_{q_{1}-q_{r_2}+u,q_{1}-q_l+i+1}t^{s+1},\label{552}\\
&\quad\bigotimes_{1\leq s\leq l}\ev_{\hbar,\ve-(q_s-q_{Min})\hbar}^{q_s,(\gamma_s-\frac{q_s-q_{Min}}{2})\hbar}(C_i)\nonumber\\
&=\hbar\sum_{r=1}^l\displaystyle\sum_{s \geq 0}\limits\sum_{u=1}^{q_r-q_l} e^{(r)}_{q_1-q_r+u,q_1-q_l+i}t^{-s-1}e^{(r)}_{q_1-q_l+i,q_1-q_r+u}t^{s+1}\nonumber\\
&\quad-\hbar\displaystyle\sum_{s \geq 0}\limits\sum_{u=1}^{q_r-q_l} e^{(r)}_{q_1-q_r+u,q_1-q_l+i+1}t^{-s-1} e^{(r)}_{q_1-q_l+i+1,q_1-q_r+u}t^{s+1}\nonumber\\
&\quad+\hbar\displaystyle\sum_{r_1<r_2}\sum_{s \geq 0}\limits\sum_{u=1}^{q_{r_2}-q_l} e^{(r_1)}_{q_1-q_{r_2}+u,q_1-q_l+i}t^{-s-1}e^{(r_2)}_{q_1-q_l+i,q_1-q_{r_2}+u}t^{s+1}\nonumber\\
&\quad-\hbar\displaystyle\sum_{r_1<r_2}\displaystyle\sum_{s \geq 0}\limits\sum_{u=1}^{q_{r_2}-q_l} e^{(r_1)}_{q_1-q_{r_2}+u,q_1-q_l+i+1}t^{-s-1} e^{(r_2)}_{q_1-q_l+i+1,q_1-q_{r_2}+u}t^{s+1}\nonumber\\
&\quad+\hbar\displaystyle\sum_{r_1<r_2}\displaystyle\sum_{s \geq 0}\limits\sum_{u=1}^{q_{r_2}-q_l} e^{(r_2)}_{q_1-q_{r_2}+u,q_1-q_l+i}t^{-s-1} e^{(r_1)}_{q_1-q_l+i,q_1-q_{r_2}+u}t^{s+1}\nonumber\\
&\quad-\hbar\displaystyle\sum_{r_1<r_2}\displaystyle\sum_{s \geq 0}\limits\sum_{u=1}^{q_{r_2}-q_l} e^{(r_2)}_{q_1-q_{r_2}+u,q_1-q_l+i+1}t^{-s-1} e^{(r_1)}_{q_1-q_l+i+1,q_1-q_{r_2}+u}t^{s+1}.\label{553}
\end{align}
By the definition of $\widetilde{\mu}$, we have
\begin{align}
&\quad\hbar\widetilde{\mu}(W^{(2)}_{q_1-q_l+i,q_1-q_l+i}t)-\hbar\widetilde{\mu}(W^{(2)}_{q_1-q_l+i+1,q_1-q_l+i+1}t)\nonumber\\
&=\hbar\sum_{r=1}^n\gamma_re^{(r)}_{q_1-q_l+i,q_1-q_l+i}+\hbar\sum_{s\in\mathbb{Z}}\limits\sum_{r_1<r_2}\sum_{u>q_1-q_l}\limits e^{(r_1)}_{u,q_1-q_l+i}t^{-s}e^{(r_2)}_{q_1-q_l+i,u}t^s\nonumber\\
&\quad-\hbar\sum_{s\geq0}\limits\sum_{r\geq0}\limits\sum_{1\leq u \leq q_r-q_l}\limits e^{(r)}_{q_1-q_r+u,q_1-q_l+i}t^{-s-1}e^{(r)}_{q_1-q_l+i,q_1-q_r+u}t^{s+1}\nonumber\\
&\quad-\hbar\sum_{s\geq0}\limits\sum_{r\geq0}\limits\sum_{1\leq u \leq q_r-q_l}\limits e^{(r)}_{q_1-q_l+i,q_1-q_r+u}t^{-s}e^{(r)}_{q_1-q_r+u,q_1-q_l+i}t^{s}\nonumber\\
&\quad-\hbar\sum_{s\in\mathbb{Z}}\limits\sum_{r_1<r_2}\limits\sum_{1\leq u \leq q_{r_2}-q_l}\limits e^{(r_1)}_{q_1-q_l+i,q_1-q_{r_2}+u}t^{s+1}e^{(r_2)}_{q_1-q_{r_2}+u,q_1-q_l+i}t^{-s-1}\nonumber\\
&\quad-\hbar\sum_{r=1}^n\gamma_re^{(r)}_{q_1-q_l+i+1,q_1-q_l+i+1}-\hbar\sum_{s\in\mathbb{Z}}\limits\sum_{r_1<r_2}\sum_{u>q_1-q_l}\limits e^{(r_1)}_{u,q_1-q_l+i+1}t^{-s}e^{(r_2)}_{q_1-q_l+i+1,u}t^s\nonumber\\
&\quad+\hbar\sum_{s\geq0}\limits\sum_{r\geq0}\limits\sum_{1\leq u \leq q_r}\limits e^{(r)}_{q_1-q_r+u,q_1-q_l+i+1}t^{-s-1}e^{(r)}_{q_1-q_l+i+1,q_1-q_r+u}t^{s+1}\nonumber\\
&\quad+\hbar\sum_{s\geq0}\limits\sum_{r\geq0}\limits\sum_{1\leq u \leq q_r}\limits e^{(r)}_{q_1-q_l+i+1,q_1-q_r+u}t^{-s}e^{(r)}_{q_1-q_r+u,q_1-q_l+i+1}t^{s}\nonumber\\
&\quad+\hbar\sum_{s\in\mathbb{Z}}\limits\sum_{r_1<r_2}\limits\sum_{1\leq u \leq q_{r_2}-q_l}\limits e^{(r_1)}_{q_1-q_l+i+1,q_1-q_{r_2}+u}t^{s+1}e^{(r_2)}_{q_1-q_{r_2}+u,q_1-q_l+i+1}t^{-s-1}.\label{554}
\end{align}
Then, by a direct computation, the sum of \eqref{551}-\eqref{554} is equal to 
\begin{equation*}
\bigotimes_{1\leq s\leq l}\ev_{\hbar,\ve-(q_s-q_{Min})\hbar}^{q_s,(\gamma_s-\frac{q_s-q_l}{2})\hbar}\circ\Delta^l(H_{i,1})+\hbar\widetilde{\mu}(W^{(2)}_{q_1-q_l+i,q_1-q_l+i}t)-\hbar\widetilde{\mu}(W^{(2)}_{q_1-q_l+i+1,q_1-q_l+i+1}t).
\end{equation*}
We divide the sum into 8 piecies:
\begin{align}
&\quad\eqref{551}_1+\eqref{554}_1+\eqref{554}_6=-\dfrac{i}{2}\hbar \widetilde{\mu}(W^{(1)}_{q_1-q_l+i,q_1-q_l+i}-W^{(1)}_{q_1-q_l+i+1,q_1-q_l+i+1}),\\
&\quad\eqref{551}_2+\eqref{552}_1+\eqref{552}_2=-\hbar\widetilde{\mu}(W^{(1)}_{q_1-q_l+i,q_1-q_l+i}W^{(1)}_{q_1-q_l+i+1,q_1-q_l+i+1}),\\
&\quad\eqref{551}_3+\eqref{551}_4+\eqref{553}_1+\eqref{554}_3+\eqref{554}_4\nonumber\\
&=\hbar \sum_{r=1}^l\limits\displaystyle\sum_{s \geq 0}  \limits\displaystyle\sum_{u=q_1-q_l+1}^{q_1-q_l+i}\limits  e^{(r)}_{q_1-q_l+i,u}t^{-s}e^{(r)}_{u,q_1-q_l+i}t^s\nonumber\\
&\quad+\hbar\sum_{r=1}^l\limits \displaystyle\sum_{s \geq 0} \limits\displaystyle\sum_{u=q_1-q_l+i+1}^{q_1}\limits  e^{(r)}_{q_1-q_l+i,u}t^{-s-1}e^{(r)}_{u,q_1-q_l+i}t^{s+1},\label{551--1}\\
&\quad\eqref{551}_5+\eqref{551}_6+\eqref{553}_2+\eqref{554}_{8}+\eqref{554}_{9}\nonumber\\
&=-\hbar\sum_{r=1}^l\limits\displaystyle\sum_{s \geq 0}\limits\displaystyle\sum_{u=q_1-q_l+1}^{q_1-q_l+i}\limits e^{(r)}_{q_1-q_l+i+1,u}t^{-s} e^{(r)}_{u,q_1-q_l+i+1}t^{s}\nonumber\\
&\quad-\hbar\sum_{r=1}^l\limits\displaystyle\sum_{s \geq 0}\limits\displaystyle\sum_{u=q_1-q_l+i+1}^{n} \limits e^{(r)}_{q_1-q_l+i+1,u}t^{-s-1} e^{(r)}_{u,q_1-q_l+i+1}t^{s+1},\label{551-0}\\
&\quad\eqref{552}_3+\eqref{552}_5+\eqref{553}_3+\eqref{554}_{2}\nonumber\\
&=\hbar\sum_{r_1<r_2}\limits\displaystyle\sum_{s \geq 0}  \limits\displaystyle\sum_{u=q_{r_2}-q_l+1}^{q_{r_2}-q_l+i}\limits e^{(r_1)}_{q_{1}-q_{r_2}+u,q_{1}-q_l+i}t^{s}e^{(r_2)}_{q_{1}-q_{r_2}+i,u}t^{-s}\nonumber\\
&\quad+\hbar\sum_{r_1<r_2}\limits\displaystyle\sum_{s \geq 0} \limits\displaystyle\sum_{u=q_{r_2}-q_l+i+1}^{q_{r_2}}\limits e^{(r_1)}_{q_{1}-q_{r_2}+u,q_{1}-q_l+i}t^{s+1}e^{(r_2)}_{q_{1}-q_l+i,q_{1}-q_{r_2}+u}t^{-s-1},\label{551-1}\\
&\quad\eqref{552}_4+\eqref{552}_6+\eqref{553}_5+\eqref{554}_{5}\nonumber\\
&=\hbar\sum_{r_1<r_2}\limits\displaystyle\sum_{s \geq 0}  \limits\displaystyle\sum_{u=q_{r_2}-q_l+1}^{q_{r_2}-q_l+i}\limits e^{(r_1)}_{q_{1}-q_l+i,q_{1}-q_{r_2}+u}t^{-s}e^{(r_2)}_{q_{1}-q_{r_2}+u,q_{1}-q_l+i}t^s\nonumber\\
&\quad+\hbar\sum_{r_1<r_2}\limits\displaystyle\sum_{s \geq 0} \limits\displaystyle\sum_{u=q_{r_2}-q_l+i+1}^{q_{r_2}}\limits e^{(r_1)}_{q_{1}-q_l+i,q_{1}-q_{r_2}+u}t^{-s-1}e^{(r_2)}_{q_{1}-q_{r_2}+u,q_{1}-q_l+i}t^{s+1},\label{551-2}\\
&\quad\eqref{552}_7+\eqref{552}_9+\eqref{553}_4+\eqref{554}_{7}\nonumber\\
&=-\hbar\sum_{r_1<r_2}\limits\displaystyle\sum_{s \geq 0}\limits\displaystyle\sum_{u=q_{r_2}-q_l+1}^{q_{r_2}-q_l+i}\limits e^{(r_1)}_{q_{1}-q_{r_2}+u,q_{1}-q_l+i+1}t^{s}e^{(r_2)}_{q_{1}-q_l+i+1,q_{1}-q_{r_2}+u}t^{-s}\nonumber\\
&\quad-\hbar\sum_{r_1<r_2}\limits\displaystyle\sum_{s \geq 0}\limits\displaystyle\sum_{u=q_{r_2}-q_l+i+1}^{q_{r_2}} \limits e^{(r_1)}_{q_{1}-q_{r_2}+u,q_{1}-q_l+i+1}t^{s+1} e^{(r_2)}_{q_{1}-q_l+i+1,q_{1}-q_{r_2}+u}t^{-s-1},\label{551-3}\\
&\quad\eqref{552}_8+\eqref{552}_{10}+\eqref{553}_6+\eqref{554}_{10}\nonumber\\
&=-\hbar\sum_{r_1<r_2}\limits\displaystyle\sum_{s \geq 0}\limits\displaystyle\sum_{u=q_{r_2}-q_l+1}^{q_{r_2}-q_l+i}\limits e^{(r_1)}_{q_{1}-q_l+i+1,q_{1}-q_{r_2}+u}t^{-s}e^{(r_2)}_{q_{1}-q_{r_2}+u,q_{1}-q_l+i+1}t^s\nonumber\\
&\quad-\hbar\sum_{r_1<r_2}\limits\displaystyle\sum_{s \geq 0}\limits\displaystyle\sum_{u=q_{r_2}-q_l+i+1}^{q_{r_2}} \limits e^{(r_1)}_{q_{1}-q_l+i+1,q_{1}-q_{r_2}+u}t^{-s-1}e^{(r_2)}_{q_{1}-q_{r_2}+u,q_{1}-q_l+i+1}t^{s+1}.\label{551-4}
\end{align}
By a direct computation, we have
\begin{align*}
\eqref{551--1}+\eqref{551-1}+\eqref{551-2}
&=\widetilde{\mu}(\hbar\displaystyle\sum_{s \geq 0}  \limits\displaystyle\sum_{u=1}^{i}\limits W^{(1)}_{q_1-q_l+i,q_1-q_l+u}t^{-s}W^{(1)}_{q_1-q_l+u,q_1-q_l+i}t^s\\
&\quad+\hbar\displaystyle\sum_{s \geq 0} \limits\displaystyle\sum_{u=i+1}^{n}\limits W^{(1)}_{q_1-q_l+i,q_1-q_l+u}t^{-s-1} W^{(1)}_{q_1-q_l+u,q_1-q_l+i}t^{s+1})
\end{align*}
and
\begin{align*}
\eqref{551-0}+\eqref{551-3}+\eqref{551-4}
&=\widetilde{\mu}(-\hbar\displaystyle\sum_{s \geq 0}\limits\displaystyle\sum_{u=1}^{i}\limits W^{(1)}_{q_1-q_l+i+1,q_1-q_l+u}t^{-s} W^{(1)}_{q_1-q_l+u,q_1-q_l+i+1}t^s\\
&\quad-\hbar\displaystyle\sum_{s \geq 0}\limits\displaystyle\sum_{u=i+1}^{n} \limits W^{(1)}_{q_1-q_l+i+1,q_1-q_l+u}t^{-s-1} W^{(1)}_{q_1-q_l+u,q_1-q_l+i+1}t^{s+1}).
\end{align*}
Thus, we have proven the relation \eqref{112}.
\end{proof}
\section{Coproduct for the affine Yangian and the parabolic induction for $W$-algebras}
In this section, we show the compatibility with the coproduct for the affine Yangian and the parabolic induction for a $W$-algebra of type $A$. Here after, we assume that $\hbar\neq0$. Let us take $N_1,N_2$ and nilpotent elements of $f_1,f_2$ in the same way as Section 7. By Theorem~\ref{Main}, we obtain the following theorem.
\begin{Theorem}
\begin{enumerate}
\item Suppose that $\ve=\hbar(k+N-q_{Min})$ and $q_1\geq\min\{q_w,q_{w+1}\}$. Then, we can define a homomorphism
\begin{gather*}
\Phi_1\colon Y_{\hbar,\ve-(\min\{q_1,q_w\}-q_{Min})\hbar}(\widehat{\mathfrak{sl}}(\min\{q_1,q_w\}))\to \mathcal{U}(\mathcal{W}^{k+N_2}(\mathfrak{gl}(N_1),f_1))
\end{gather*}
by the same formula as $\Phi$.
\item Suppose that $\ve=\hbar(k+N-q_{Min})$ and $q_l\geq\min\{q_{w+1},q_w\}$. Then, we can define a homomorphism
\begin{gather*}
\Phi_2\colon Y_{\hbar,\ve-(\min\{q_{w+1},q_l\}-q_{Min})\hbar}(\widehat{\mathfrak{sl}}(\min\{q_{w+1},q_l\}))\to \mathcal{U}(\mathcal{W}^{k+N_1}(\mathfrak{gl}(N_2),f_2))
\end{gather*}
by the same formula as $\Phi$.
\end{enumerate}
\end{Theorem}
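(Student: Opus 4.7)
The plan is to deduce each clause directly from Theorem~\ref{Main} applied to the sub-system $(\mathfrak{gl}(N_1),f_1)$ at level $k_1:=k+N_2$ (resp.\ $(\mathfrak{gl}(N_2),f_2)$ at level $k_2:=k+N_1$). First I would verify that the truncated partition $(q_1,\ldots,q_w)$ satisfies the V-shape condition of Section~6: if $w\leq v$, the sequence is non-decreasing; if $w>v$, it is non-decreasing through $q_v$ and non-increasing from $q_{v+1}$ onward. Thus $f_1$ is precisely a nilpotent of the type handled by Theorem~\ref{Main}, with local minimum $q_{Min}^{(1)}=\min\{q_1,q_w\}$; the symmetric check for $(q_{w+1},\ldots,q_l)$ gives $q_{Min}^{(2)}=\min\{q_{w+1},q_l\}$.

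Next, I would match the Yangian parameter. The hypothesis of Theorem~\ref{Main} for $(\mathfrak{gl}(N_1),f_1,k_1)$ demands
\begin{equation*}
\ve_1 \;=\; \hbar\bigl(k_1+N_1-q_{Min}^{(1)}\bigr) \;=\; \hbar\bigl(k+N-\min\{q_1,q_w\}\bigr) \;=\; \ve-(\min\{q_1,q_w\}-q_{Min})\hbar,
\end{equation*}
which coincides with the parameter in the source of $\Phi_1$; the analogous identity $\ve_2=\ve-(\min\{q_{w+1},q_l\}-q_{Min})\hbar$ holds for $\Phi_2$ with $k_2$. The remaining size condition $q_{Min}^{(i)}\geq 3$ follows from $q_{Min}\geq 3$ and the V-shape: $q_1,q_l\geq q_{Min}$ are immediate, and the extra hypothesis $q_1\geq\min\{q_w,q_{w+1}\}$ in~(1) (resp.\ $q_l\geq\min\{q_w,q_{w+1}\}$ in~(2)) guarantees that the local minimum of the truncated partition lies above the minimum at the split, so $q_{Min}^{(i)}\geq q_{Min}\geq 3$.

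Having verified the hypotheses, Theorem~\ref{Main} produces $\Phi_1$ and $\Phi_2$ defined by the same formulas as $\Phi$, interpreted inside $\mathcal{U}(\mathcal{W}^{k+N_2}(\mathfrak{gl}(N_1),f_1))$ and $\mathcal{U}(\mathcal{W}^{k+N_1}(\mathfrak{gl}(N_2),f_2))$ respectively. The only genuine obstacle is bookkeeping: the generators $W^{(1)}_{p,q}$ and $W^{(2)}_{p,q}$ appearing in the defining formula of $\Phi$ are indexed with respect to the global $q_{Max}$ and $q_{Min}$, whereas in the sub-systems one must replace these by $q_{Max}^{(i)}$ and $q_{Min}^{(i)}$ read off from the truncated partition and re-confirm that each index falls in the admissible range appearing in the statement of Theorem~\ref{Main}. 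Once this re-indexing is made, no additional computation beyond that carried out in Theorem~\ref{hojo} is required, and the existence of $\Phi_1$, $\Phi_2$ follows immediately.
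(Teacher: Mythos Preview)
Your argument is correct and is exactly the approach the paper takes: the paper's entire proof is the sentence ``By Theorem~\ref{Main}, we obtain the following theorem,'' and you have simply filled in the bookkeeping (V-shape for the truncated partition, parameter matching $\ve_i=\hbar(k_i+N_i-q_{Min}^{(i)})$, and the $\geq 3$ bound) that the paper leaves implicit.

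One small remark: your explanation of the role of the hypothesis $q_1\geq\min\{q_w,q_{w+1}\}$ is not quite right. The inequality $q_{Min}^{(1)}=\min\{q_1,q_w\}\geq q_{Min}$ holds unconditionally from the V-shape (if $w\leq v$ then $q_{Min}^{(1)}=q_1\geq q_{Min}$; if $w>v$ then $q_w\geq q_l$ so $\min\{q_1,q_w\}\geq\min\{q_1,q_l\}$), so the extra hypothesis is not what secures the $\geq 3$ bound. Rather, the paper imposes it to delineate the cases in which the ordinary affine Yangian suffices as the domain of $\Phi_1$; when it fails, the paper passes to the extended algebras $Y^{p,L}$, $Y^{p,R}$ introduced immediately afterward. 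This does not affect the validity of your proof.
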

In order to consider the case that $q_1<q_w,q_{w+1}$ or $q_l<q_w,q_{w+1}$, we need to construct new associative algebras. 

For integers $n\geq0$ and $1\leq i\leq m+n$, we set the following elements in $\widetilde{Y}_{\hbar,\ve}(\widehat{\mathfrak{sl}}(m+n))$:
\begin{align*}
v_i
&=\dfrac{\hbar}{2}\sum_{\substack{i<z\leq m+n,\\s\geq0}}\limits E_{z,i}t^{-s}E_{i,z}t^s+\dfrac{\hbar}{2}\sum_{\substack{i>z\geq1,\\s\geq0}}\limits E_{z,i}t^{-s-1}E_{i,z}t^{s+1}\\
&\quad-\dfrac{\hbar}{2}\sum_{\substack{1\leq u<i,\\s\geq0}}\limits E_{i,u}t^{-s}E_{u,i}t^s-\dfrac{\hbar}{2}\sum_{\substack{m+n\geq u>i,\\s\geq0}}\limits E_{i,u}t^{-s-1}E_{u,i}t^{s+1},\\
J(h_i)&=\widetilde{H}_{i,1}+v_i-v_{i+1}.
\end{align*}
Let $\{\alpha_u\}_{u=1}^{m+n}$ be simple roots of $\widehat{\mathfrak{sl}}(m+n)$, $\Delta^{\text{re}}$ be a set of real positive roots of $\widehat{\mathfrak{sl}}(m+n)$ and $(\ ,\ )$ be the standard form on $\widehat{\mathfrak{sl}}(m+n)$.  

In Proposition 3.21 of \cite{GNW}, the following relation was obtained:
\begin{align}
(\alpha_j,\beta)[J(h_i),x_\beta]-(\alpha_i,\beta)[J(h_j),x_\beta]\in\mathbb{C}x_\beta,\label{JJ}
\end{align}
where $x_\beta$ is a root vector whose root is $\beta$. 

Assume that $\widetilde{\ve}=\ve-n\hbar$ and $\Psi^{m.m+n}$ is an edge contraction from $Y_{\hbar,\ve}(\widehat{\mathfrak{sl}}(m))$ to $\widetilde{Y}_{\hbar,\widetilde{\ve}}(\widehat{\mathfrak{sl}}(m+n))$.
By using \eqref{JJ}, for $1\leq v\leq n$ and $1\leq j\leq m$, the following relations hold in  $\widetilde{Y}_{\hbar,\widetilde{\ve}}(\widehat{\mathfrak{sl}}(m+n))$:
\begin{align*}
&\quad[\Psi^{m,m+n}(\widetilde{H}_{i,1}),E_{v,n+j}t^x]=[\ev^{m+n,0}_{\hbar,\widetilde{\ve}}(\Psi^{m,m+n}(\widetilde{H}_{i,1})),E_{v,n+j}t^x]\\
&=a^{m,m+n}_{i,j}-a^{m,m+n}_{i+1,j}\text{ for }j\neq i,i+1,\\
&\quad[\Psi^{m,m+n}(\widetilde{H}_{i,1}),E_{n+j,v}t^{-x}]
=[\ev^{m+n,0}_{\hbar,\widetilde{\ve}}(\Psi^{m,m+n}(\widetilde{H}_{i,1})),E_{n+j,v}t^{-x}]\\
&=b^{m,m+n}_{i,j}-b^{m,m+n}_{i+1,j}\text{ for }j\neq i,i+1,\\
&\quad[\Psi^{m,m+n}(\widetilde{H}_{0,1}),E_{v,n+j}t^x]
=[\ev^{m+n,0}_{\hbar,\widetilde{\ve}}(\Psi^{m,m+n}(\widetilde{H}_{0,1})),E_{v,n+j}t^x]\\
&=a^{m,m+n}_{m,j}-a^{m,m+n}_{1,j}\text{ for }j\neq 1,m\\
&\quad[\Psi^{m,m+n}(\widetilde{H}_{0,1}),E_{n+j,v}t^{-x}]
=[\ev^{m+n,0}_{\hbar,\widetilde{\ve}}(\Psi^{m,m+n}(\widetilde{H}_{0,1})),E_{n+j,v}t^{-x}]\\
&=b^{m,m+n}_{m,j}-b^{m,m+n}_{1,j}\text{ for }j\neq 1,m,\\
&\quad[\Psi^{m,m+n}(\widetilde{H}_{i-1,1}),E_{v,n+i}t^x]+[\Psi^{m,m+n}(\widetilde{H}_{i,1}),E_{v,n+i}t^x]\\
&=[\ev^{m+n,0}_{\hbar,\widetilde{\ve}}(\Psi^{m,m+n}(\widetilde{H}_{i-1,1})),E_{v,n+i}t^x]+[\ev^{m+n,0}_{\hbar,\widetilde{\ve}}(\Psi^{m,m+n}(\widetilde{H}_{i,1})),E_{v,n+i}t^x]\\
&=a^{m,m+n}_{i-1,i}-a^{m,m+n}_{i+1,i}-\dfrac{\hbar}{2}E_{v,n+i}t^x,\\
&\quad[\Psi_{m,m+n}(\widetilde{H}_{i-1,1}),E_{n+i,v}t^{-x}]+[\Psi^{m,m+n}(\widetilde{H}_{i,1}),E_{n+i,v}t^{-x}]\\
&=[\ev^{m+n,0}_{\hbar,\widetilde{\ve}}(\Psi_{m,m+n}(\widetilde{H}_{i-1,1})),E_{n+i,v}t^{-x}]+[\ev^{m+n,0}_{\hbar,\widetilde{\ve}}(\Psi^{m,m+n}(\widetilde{H}_{i,1})),E_{n+i,v}t^{-x}]\\
&=b^{m,m+n}_{i-1,i}-b^{m,m+n}_{i+1,i}+\dfrac{\hbar}{2}E_{n+i,v}t^{-x},\\
&\quad[\Psi^{m,m+n}(\widetilde{H}_{0,1}),e_{v,n+1}t^{x}]+[\Psi^{m,m+n}(\widetilde{H}_{1,1}),E_{v,n+1}t^x]\\
&=[\ev^{m+n,0}_{\hbar,\widetilde{\ve}}(\Psi^{m,m+n}(\widetilde{H}_{0,1})),e_{v,n+1}t^{x}]+[\ev^{m+n,0}_{\hbar,\widetilde{\ve}}(\Psi^{m,m+n}(\widetilde{H}_{1,1})),E_{v,n+1}t^{x}]\\
&=a^{m,m+n}_{m,1}-a^{m,m+n}_{2,1}+\dfrac{\hbar}{2}E_{v,n+1}t^x,\\
&\quad[\Psi^{m,m+n}(\widetilde{H}_{0,1}),E_{n+1,v}t^{-x}]+[\Psi^{m,m+n}(\widetilde{H}_{1,1}),E_{n+1,v}t^{-x}]\\
&=[\ev^{m+n,0}_{\hbar,\widetilde{\ve}}(\Psi^{m,m+n}(\widetilde{H}_{0,1})),E_{n+1,v}t^{-x}]+[\ev^{m+n,0}_{\hbar,\widetilde{\ve}}(\Psi^{m,m+n}(\widetilde{H}_{1,1})),E_{n+1,v}t^{-x}]\\
&=b^{m,m+n}_{m,1}-b^{m,m+n}_{2,1}-\dfrac{\hbar}{2}E_{n+1,v}t^{-x},\\
&\quad
[\Psi^{m,m+n}(\widetilde{H}_{0,1}),E_{v,m+n}t^w]+[\Psi^{m,m+n}(\widetilde{H}_{m-1,1}),E_{v,m+n}t^w]\\
&=[\ev^{m+n,0}_{\hbar,\widetilde{\ve}}(\Psi^{m,m+n}(\widetilde{H}_{0,1})),E_{v,m+n}t^w]+[\ev^{m+n,0}_{\hbar,\widetilde{\ve}}(\Psi^{m,m+n}(\widetilde{H}_{m-1,1})),E_{v,m+n}t^w]\\
&=a^{m,m+n}_{1,m}-a^{m,m+n}_{m-1,m}-\dfrac{\hbar}{2}(m-1)E_{v,m+n}t^x-{\ve} E_{v,m+n}t^x,\\
&\quad[\Psi^{m,m+n}(\widetilde{H}_{0,1}),E_{m+n,v}t^{-x}]+[\Psi^{m,m+n}(\widetilde{H}_{m-1,1}),E_{m+n,v}t^{-x}]\\
&=[\ev^{m+n,0}_{\hbar,\widetilde{\ve}}(\Psi^{m,m+n}(\widetilde{H}_{0,1})),E_{m+n,v}t^{-x}]+[\ev^{m+n,0}_{\hbar,\widetilde{\ve}}(\Psi^{m,m+n}(\widetilde{H}_{m-1,1})),E_{m+n,v}t^{-x}]\\
&=b^{m,m+n}_{1,m}-b^{m,m+n}_{m-1,m}+\dfrac{\hbar}{2}(m-1)E_{m+n,v}t^{-x}+{\ve} E_{m+n,v}t^{-x},
\end{align*}
where
\begin{align*}
a^{m,m+n}_{i,j}&=
\delta(j<i)\hbar\displaystyle\sum_{s \geq 0}\limits E_{v,n+i}t^{x-s-1}E_{n+i,n+j}t^{s+1}+\delta(j>i)\hbar\sum_{s\geq0}\limits E_{v,n+i}t^{x-s}E_{n+i,n+j}t^{s},\\
b^{m,m+n}_{i,j}&=
\delta(j<i)\hbar\displaystyle\sum_{s \geq 0}\limits E_{n+j,n+i}t^{-s-1}E_{n+i,v}t^{-x+s+1}+\delta(j>i)\hbar\sum_{s\geq0}\limits E_{n+j,n+i}t^{-s}E_{n+i,v}t^{-x+s}.
\end{align*}
\begin{Definition}
For $n\geq 0$, we define $Y^{m+n}_{\hbar,\ve}(\widehat{\mathfrak{sl}}(m))$ as the associative algebra whose generators are $\{H_{i,r},X^\pm_{i,r}\mid0\leq i\leq m-1,r\in\mathbb{Z}_{\geq0}\}$ and $\widehat{\mathfrak{sl}}(m+n)=\mathfrak{sl}(m+n)\otimes\mathbb{C}[t^{\pm1}]\oplus\mathbb{C}\widetilde{c}$ with the relations \eqref{Eq2.1}-\eqref{Eq2.10} and
\begin{gather*}
H_{i,0}=\begin{cases}
E_{i+n,i+n}-E_{i+1+n,i+1+n}\text{ if }1\leq i\leq m-1,\\
E_{m+n,m+n}-E_{n+1,n+1}+\widetilde{c}\text{ if }i=0,
\end{cases}\\
X^+_{i,0}=\begin{cases}
E_{i+n,i+1+n}\text{ if }1\leq i\leq m-1,\\
E_{m+n,1+n}t\text{ if }i=0,
\end{cases}X^-_{i,0}=\begin{cases}
E_{i+1+n,i+n}\text{ if }1\leq i\leq m-1,\\
E_{1+n,m+n}t^{-1}\text{ if }i=0,
\end{cases}
\end{gather*}
\end{Definition}
We set the degree on $Y^{m+n}_{\hbar,\ve}(\widehat{\mathfrak{sl}}(m))$ as
\begin{equation*}
\text{deg}(H_{i,r})=0,\text{deg}(X^\pm_{i,r})=\pm \delta_{i,0},\text{deg}(zt^s)=s,\text{deg}(\widetilde{c})=0\text{ for }z\in\mathfrak{sl}(m+n).
\end{equation*}
We denote the standard degreewise completion of $Y^{m+n}_{\hbar,\ve}(\widehat{\mathfrak{sl}}(m))$ by $\widehat{Y}^{m+n}_{\hbar,\ve}(\widehat{\mathfrak{sl}}(m))$ .

We set an associative algebra $Y^{m+n,R}_{\hbar,\ve}(\widehat{\mathfrak{sl}}(m))$ as a quotient algebra of $\widehat{Y}^{m+n}_{\hbar,\ve}(\widehat{\mathfrak{sl}}(m))$ divided by
\begin{align}
[\widetilde{H}_{i,1},E_{v,n+j}t^x]
&=a^{m,m+n}_{i,j}-a^{m,m+n}_{i+1,j}\text{ for }j\neq i,i+1,\label{Eq2.11}\\
[\widetilde{H}_{i-1,1},E_{v,n+i}t^x]+[\widetilde{H}_{i,1},E_{v,n+i}t^x]
&=a^{m,m+n}_{i-1,i}-a^{m,m+n}_{i+1,i}-\dfrac{\hbar}{2}E_{v,n+i}t^x,\label{Eq2.13}\\
[\widetilde{H}_{0,1},E_{v,n+j}t^x]
&=a^{m,m+n}_{m,j}-a^{m,m+n}_{1,j}\text{ for }j\neq 1,m\label{Eq2.15}\\
[\widetilde{H}_{0,1},e_{v,n+1}t^x]+[\widetilde{H}_{1,1},E_{v,n+1}t^x]
&=a^{m,m+n}_{m,1}-a^{m,m+n}_{2,1}+\dfrac{\hbar}{2}E_{v,n+1}t^x,\label{Eq2.17}\\
[\widetilde{H}_{0,1},E_{v,m+n}t^x]+[\widetilde{H}_{m-1,1},E_{v,m+n}t^x]
&=-a^{m,m+n}_{1,m}+a^{m,m+n}_{m-1,m}-\dfrac{\hbar}{2}(m-1)E_{v,m+n}t^x-{\ve} E_{v,m+n}t^x.\label{Eq2.19}
\end{align}
We also set an associative algebra $Y^{m+n,L}_{\hbar,\ve}(\widehat{\mathfrak{sl}}(m))$ as a quotient algebra of $\widehat{Y}^{m+n}_{\hbar,\ve}(\widehat{\mathfrak{sl}}(m))$ divided by
\begin{align}
[\widetilde{H}_{i,1},E_{n+j,v}t^{-x}]
&=b^{m,m+n}_{i,j}-b^{m,m+n}_{i+1,j}\text{ for }j\neq i,i+1,\label{Eq2.12}\\
[\widetilde{H}_{i-1,1},E_{n+i,v}t^{-x}]+[\widetilde{H}_{i,1},E_{n+i,v}t^{-x}]
&=b^{m,m+n}_{i-1,i}-b^{m,m+n}_{i+1,i}+\dfrac{\hbar}{2}E_{n+i,v}t^{-x},\label{Eq2.14}\\
[\widetilde{H}_{0,1},E_{n+j,v}t^{-x}]
&=b^{m,m+n}_{m,j}-b^{m,m+n}_{1,j}\text{ for }j\neq 1,m\label{Eq2.16}\\
[\widetilde{H}_{0,1},E_{n+1,v}t^{-x}]+[\widetilde{H}_{1,1},E_{n+1,v}t^{-x}]
&=b^{m,m+n}_{m,1}-b^{m,m+n}_{2,1}-\dfrac{\hbar}{2}E_{n+1,v}t^{-x},\label{Eq2.18}\\
[\widetilde{H}_{0,1},E_{m+n,v}t^{-x}]&+[\widetilde{H}_{m-1,1},E_{m+n,v}t^{-x}]\nonumber\\
=-b^{m,m+n}_{1,m}&+b^{m,m+n}_{m-1,m}+\dfrac{\hbar}{2}(m-1)E_{m+n,v}t^{-x}+{\ve} E_{m+n,v}t^{-x}.\label{Eq2.20}
\end{align}
By the definition, $Y^{m+n,L}_{\hbar,\ve}(\widehat{\mathfrak{sl}}(m+n))$ and $Y^{m+n,L}_{\hbar,\ve}(\widehat{\mathfrak{sl}}(m+n))$ coincides with $\widetilde{Y}_{\hbar,\ve}(\widehat{\mathfrak{sl}}(m+n))$. For $p>m+n$, we define $Y^{m+n,L}_{\hbar,\ve}(\widehat{\mathfrak{sl}}(p))$.

The following theorem follows from the definition of $Y^{m+n,L}_{\hbar,\ve}(\widehat{\mathfrak{sl}}(m))$ and $Y^{m+n,R}_{\hbar,\ve}(\widehat{\mathfrak{sl}}(m))$.
\begin{Theorem}
For $m_1<m_2$, there exist homomorphisms
\begin{gather*}
\Psi_L^{m_1,m_2}\colon Y^{m_3,L}_{\hbar,\ve}(\widehat{\mathfrak{sl}}(m_1))\to Y^{m_3,L}_{\hbar,\ve-(m_2-m_1)\hbar}(\widehat{\mathfrak{sl}}(m_2)),\\
\Psi_R^{m_1,m_2}\colon Y^{m_3,R}_{\hbar,\ve}(\widehat{\mathfrak{sl}}(m_1))\to Y^{m_3,R}_{\hbar,\ve-(m_2-m_1)\hbar}(\widehat{\mathfrak{sl}}(m_2))
\end{gather*}
defined by the same formula as $\Psi^{m_1,m_2}$.
\end{Theorem}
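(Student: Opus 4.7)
The plan is to define $\Psi_L^{m_1,m_2}$ on generators by the identity on every element of $\widehat{\mathfrak{sl}}(m_3)$ and by the formulas of Theorem~\ref{e} on the Yangian generators $X^{\pm}_{i,0}$ and $H_{i,1}$, and analogously for $\Psi_R^{m_1,m_2}$. Writing $n_i=m_3-m_i$ and $a=m_2-m_1$, consistency on the identified degree-zero generators is immediate: Theorem~\ref{e} sends $X^{+}_{i,0}$ to $E_{a+i,a+i+1}$ of the target Yangian $\widetilde{Y}_{\hbar,\ve-a\hbar}(\widehat{\mathfrak{sl}}(m_2))$, which under the target-side identification is the element $E_{i+n_1,i+1+n_1}$ of $\widehat{\mathfrak{sl}}(m_3)$ (since $a+n_2=n_1$), and this coincides with the identity image of the source's $X^{+}_{i,0}=E_{i+n_1,i+1+n_1}$. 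Identical checks apply to $H_{i,0}$ and $X^{-}_{i,0}$.

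It then suffices to verify that $\Psi_L^{m_1,m_2}$ respects every defining relation of the source. The affine Yangian relations \eqref{Eq2.1}--\eqref{Eq2.10} are preserved by Theorem~\ref{e}, and the $\widehat{\mathfrak{sl}}(m_3)$ Lie-bracket relations by the identity. The real work is in the mixing relations \eqref{Eq2.12}, \eqref{Eq2.14}, \eqref{Eq2.16}, \eqref{Eq2.18}, \eqref{Eq2.20}. For these I would decompose $\Psi^{m_1,m_2}(\widetilde{H}_{i,1})=\widetilde{H}^{(2)}_{i+a,1}+C_i$ for $i\neq 0$, where $\widetilde{H}^{(2)}$ denotes the target-Yangian generator and $C_i$ is the Lie-algebraic correction from Theorem~\ref{e}; after the target-side embedding of $\widehat{\mathfrak{sl}}(m_2)$ into $\widehat{\mathfrak{sl}}(m_3)$, the matrix indices appearing in $C_i$ lie in $\{n_2+1,\dots,m_3-1\}$. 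The $i=0$ case admits the analogous decomposition, with the additional sum $\sum_{k=1}^{a}\widetilde{H}^{(2)}_{k,1}$ and the $\ve$-dependent scalar/central term from Theorem~\ref{e}.

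To check \eqref{Eq2.12} for $i\neq 0$ and $j\neq i,i+1$, I split on the range of $v$. When $v\in\{1,\dots,n_2\}$ the identity $n_1+j=n_2+(j+a)$ lets us apply the target's own relation \eqref{Eq2.12} to $[\widetilde{H}^{(2)}_{i+a,1},E_{n_2+(j+a),v}t^{-x}]$; inspection of the formula for $b^{m,m+n}_{i,j}$ shows $b^{m_2,m_3}_{i+a,j+a}=b^{m_1,m_3}_{i,j}$ because both expressions depend only on the absolute matrix indices $n_1+i,n_1+j,v$. Moreover, the contraction rules for brackets of the form $[E_{p,q}t^{\cdot},E_{n_1+j,v}t^{-x}]$ together with the index ranges above force $[C_i,E_{n_1+j,v}t^{-x}]=0$ in this generic case. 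When $v\in\{n_2+1,\dots,n_1\}$, both indices of $E_{n_1+j,v}t^{-x}$ lie in the target-Yangian root lattice, so the commutator can be evaluated inside $\widetilde{Y}_{\hbar,\ve-a\hbar}(\widehat{\mathfrak{sl}}(m_2))$: one realises $E_{n_1+j,v}t^{-x}$ as an iterated bracket of target-Yangian Chevalley generators and uses Theorem~\ref{e} together with the affine Yangian relations to compute the bracket.

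The main obstacle is the boundary relations \eqref{Eq2.14}, \eqref{Eq2.18}, \eqref{Eq2.20}, where the non-vanishing pieces of $[C_i,E_{n_1+j,v}t^{-x}]$ (coming from $j\in\{i,i+1\}$ or $v$ in the narrow band $\{n_2+1,\dots,n_1\}$) must be matched against the explicit $\pm\tfrac{\hbar}{2}E$ corrections in these relations, and where the $\ve$-shift $\ve\mapsto\ve-a\hbar$, the discrepancy between $\tfrac{\hbar}{2}(m_1-1)$ and $\tfrac{\hbar}{2}(m_2-1)$, and the scalar/central correction $\tfrac{\hbar}{2}(aE_{m_3,m_3}+\sum_uE_{u,u}+a\widetilde{c})$ in $\Psi^{m_1,m_2}(\widetilde{H}_{0,1})$ must all be reconciled in a single equation. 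This bookkeeping is essentially the same type of computation that produced the relations \eqref{Eq2.11}--\eqref{Eq2.20} in the first place, now carried out one layer deeper; once it is completed, $\Psi_R^{m_1,m_2}$ is treated by a fully parallel argument using $a^{m,m+n}_{i,j}$ and the right-sided relations \eqref{Eq2.11}, \eqref{Eq2.13}, \eqref{Eq2.15}, \eqref{Eq2.17}, \eqref{Eq2.19}.
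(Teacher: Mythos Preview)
Your approach is correct but considerably more laborious than what the paper does. The paper gives no proof at all, merely remarking that the theorem ``follows from the definition of $Y^{m+n,L}_{\hbar,\ve}(\widehat{\mathfrak{sl}}(m))$ and $Y^{m+n,R}_{\hbar,\ve}(\widehat{\mathfrak{sl}}(m))$.'' The point is that the mixing relations \eqref{Eq2.11}--\eqref{Eq2.20} were not invented from scratch: the displayed identities immediately preceding the definition show that, in $\widetilde{Y}_{\hbar,\widetilde{\ve}}(\widehat{\mathfrak{sl}}(m+n))$, the element $\Psi^{m,m+n}(\widetilde{H}_{i,1})$ already satisfies exactly these commutation rules with the extra generators $E_{v,n+j}t^x$ and $E_{n+j,v}t^{-x}$. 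The algebras $Y^{m_3,L/R}(\widehat{\mathfrak{sl}}(m))$ are then \emph{defined} by imposing precisely those identities on the abstract $\widetilde{H}_{i,1}$. Since the identities were obtained uniformly from \eqref{JJ} and the formula for $\Psi$, and since the edge contractions satisfy $\Psi^{m_2,m_3}\circ\Psi^{m_1,m_2}=\Psi^{m_1,m_3}$, the target relations applied to $\Psi^{m_1,m_2}(\widetilde{H}_{i,1})$ automatically reproduce the source relations; this is the ``by definition'' shortcut.

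What you do instead is re-verify this compatibility by direct computation: splitting on whether $v\le n_2$ or $n_2<v\le n_1$, matching $b^{m_2,m_3}_{i+a,j+a}$ with $b^{m_1,m_3}_{i,j}$, checking that $[C_i,E_{n_1+j,v}t^{-x}]$ vanishes in the generic case, and leaving the boundary bookkeeping as an acknowledged obstacle. This is a legitimate route and has the merit of being fully explicit, but you yourself note it is ``essentially the same type of computation that produced the relations \eqref{Eq2.11}--\eqref{Eq2.20} in the first place, now carried out one layer deeper.'' That observation \emph{is} the proof: rather than redoing the computation, you can simply invoke that the relations were designed so that $\widetilde{H}_{i,1}\mapsto\Psi^{m,m_3}(\widetilde{H}_{i,1})$ extends to a homomorphism into $\widetilde{Y}(\widehat{\mathfrak{sl}}(m_3))$, and then compatibility of $\Psi_L^{m_1,m_2}$ with the mixing relations follows from the factorisation $\Psi^{m_1,m_3}=\Psi^{m_2,m_3}\circ\Psi^{m_1,m_2}$.
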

\begin{Theorem}
Assume that $q_l<q_w,q_{w+1}$ and $\ve=\hbar(k+N-q_l)$. We can construct a homomorphism
\begin{align*}
\Phi_2&\colon Y^{\min\{q_w,q_{w+1}\},R}_{\hbar,\ve}(\widehat{\mathfrak{sl}}(q_l))\to\mathcal{U}(\mathcal{W}^{k+N_1}(\mathfrak{gl}(N_2),f))
\end{align*}
by the same formula as $\Phi$.
\end{Theorem}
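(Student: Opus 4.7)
The plan is to follow the same strategy used in the proof of Theorem~\ref{Main}, adapted to accommodate the extra generators from $\widehat{\mathfrak{sl}}(m+n)$ (with $m = q_l$ and $m+n = \min\{q_w, q_{w+1}\}$) that appear in $Y^{m+n,R}_{\hbar,\ve}(\widehat{\mathfrak{sl}}(q_l))$. Since $q_l < q_w, q_{w+1}$ by hypothesis, the integer $m+n$ is strictly greater than $m$, so genuine ``extra'' generators $E_{v,n+j}t^x$ are present, and we must define $\Phi_2$ on them and check both the Yangian relations \eqref{Eq2.1}--\eqref{Eq2.10} and the new ``right'' relations \eqref{Eq2.11}, \eqref{Eq2.13}, \eqref{Eq2.15}, \eqref{Eq2.17}, \eqref{Eq2.19}.

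First I would specify $\Phi_2$ on generators. On the Yangian generators $X^\pm_{i,0}$ and $H_{i,1}$ I would use the same formula as $\Phi$ (with indices referring to $\mathfrak{gl}(N_2)$ and $f_2$), which makes sense because $q_l$ is the smallest part of the partition defining $f_2$ and so $q_{Min}^{(2)} = q_l$. On the extra generators $E_{v,n+j}t^x$ I would send them to the corresponding $W^{(1)}_{p,q}t^x$ inside $\mathcal{U}(\mathcal{W}^{k+N_1}(\mathfrak{gl}(N_2),f_2))$, where the indices $(p,q)$ are obtained by shifting by $q_{Max}^{(2)}-q_l$ so that the image lies in the allowed range $p,q > q_{Max}^{(2)}-q_l$. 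The hypothesis $q_l<q_w,q_{w+1}$ is precisely what guarantees that these larger-index $W^{(1)}$ elements exist in $\mathcal{W}^{k+N_1}(\mathfrak{gl}(N_2),f_2)$.

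Second, I would verify the affine Yangian relations \eqref{Eq2.1}--\eqref{Eq2.10} for the image. This is essentially Theorem~\ref{Main} applied to the pair $(\mathfrak{gl}(N_2),f_2)$, combined with the fact that the hypothesis $\ve=\hbar(k+N-q_l)$ matches the parameter condition needed for the evaluation maps $\ev^{q_s,(\gamma_s-\frac{q_s-q_l}{2})\hbar}_{\hbar,\ve-(q_s-q_l)\hbar}$ to be compatible with the Miura embedding $\widetilde{\mu}_2$. In particular, Theorem~\ref{hojo} applied to the block data of $f_2$ gives $\widetilde{\mu}_2\circ\Phi_2\big|_{Y_{\hbar,\ve}(\widehat{\mathfrak{sl}}(q_l))}=\bigotimes_{w+1\le s\le l}\ev\circ\Sigma\circ\Delta^{l-w}$, from which the Yangian relations follow by injectivity of $\widetilde{\mu}_2$.

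Third, and this is the main obstacle, I would verify the ``right'' relations \eqref{Eq2.11}, \eqref{Eq2.13}, \eqref{Eq2.15}, \eqref{Eq2.17}, \eqref{Eq2.19}. These were introduced precisely as the identities that were established by direct commutator computation in $\widetilde{Y}_{\hbar,\widetilde{\ve}}(\widehat{\mathfrak{sl}}(m+n))$ for $\Psi^{m,m+n}(\widetilde{H}_{i,1})$ against $E_{v,n+j}t^x$, using the evaluation map $\ev^{m+n,0}_{\hbar,\widetilde{\ve}}$. Under $\widetilde{\mu}_2\circ\Phi_2$ the components corresponding to the largest block of $f_2$ factor through exactly this composition $\ev^{m+n,0}\circ\Psi^{m,m+n}$, while the contribution of the smaller blocks commutes with the $E_{v,n+j}t^x$ since these elements sit in a different tensor factor. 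Hence the same commutator identities transfer to the image, and injectivity of $\widetilde{\mu}_2$ concludes the argument. The delicate point is the boundary relations \eqref{Eq2.17} and \eqref{Eq2.19}, whose $\frac{\hbar}{2}(m-1)$ and $\ve$ shifts must match; tracking these requires combining the parameter shift $\ve=\hbar(k+N-q_l)$ with the cumulative shift $\gamma_s - (q_s-q_l)/2$ appearing in each evaluation map and with the value $\widetilde{\ve}=\ve-(q_s-q_l)\hbar$ in Theorem~\ref{e}, which is essentially bookkeeping but is where any sign or constant mismatch would surface.
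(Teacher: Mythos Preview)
Your first two steps are fine and agree with the paper: defining $\Phi_2$ on the extra $\widehat{\mathfrak{sl}}(m+n)$ generators by sending them to the appropriate $W^{(1)}_{p,q}t^x$, and invoking Theorem~\ref{Main} (equivalently Theorem~\ref{hojo}) for the relations \eqref{Eq2.1}--\eqref{Eq2.10}.

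The gap is in your third step. Your claim that under $\widetilde{\mu}_2$ the commutator splits into a ``largest block'' piece matching $\ev^{m+n,0}\circ\Psi^{m,m+n}$ and ``smaller block'' pieces that commute is not correct as stated. First, $\widetilde{\mu}_2(\Phi_2(\widetilde{H}_{i,1}))$ contains cross-block terms coming from $\widetilde{\mu}_2(W^{(2)}_{\bullet,\bullet}t)$ (see \eqref{W2}), so it does not decompose cleanly over tensor factors. Second, $\widetilde{\mu}_2(\Phi_2(E_{v,n+j}t^x))=\widetilde{\mu}_2(W^{(1)}_{v',(n+j)'}t^x)=\sum_{r}e^{(r)}_{v',(n+j)'}t^x$ has a nonzero summand in \emph{every} block $r\in\{w+1,\dots,l\}$ with $q_r$ large enough to contain the index $v'$, not only in a single largest block; when several $q_r$ exceed $m+n=\min\{q_w,q_{w+1}\}$ these extra summands are genuinely present. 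Consequently the commutator you must compute is not reduced to the single-factor identity $[\Psi^{m,m+n}(\widetilde{H}_{i,1}),E_{v,n+j}t^x]$, and the bookkeeping you allude to does not close the argument.

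The paper takes a different route for this step: it stays inside $\mathcal{U}(\mathcal{W}^{k+N_1}(\mathfrak{gl}(N_2),f_2))$, rewrites $\Phi_2(\widetilde{H}_{i,1})$ as $U_i-U_{i+1}$ plus linear terms with $U_i$ built from $W^{(2)}_{\bullet,\bullet}t$ and quadratic $W^{(1)}$ sums, and then uses Lemma~\ref{OPE} (the $W^{(1)}$--$W^{(2)}$ OPE) to compute $[U_i,W^{(1)}_{\bullet,\bullet}t^{-x}]$ directly. This yields exactly the $a^{m,m+n}_{i,j}$ expressions and the constant shifts in \eqref{Eq2.11}--\eqref{Eq2.19} without ever unpacking the Miura image. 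If you want to salvage the Miura-map approach you would need to track all cross-block contributions and all blocks $r$ with $q_r\geq m+n-v$, which amounts to redoing the proof of Lemma~\ref{OPE} after the fact; the paper's method is both shorter and conceptually cleaner here.
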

\begin{proof}
The compatibility of $\Phi_1$ with \eqref{Eq2.1}-\eqref{Eq2.10} follows from Theorem~\ref{Main}. Thus, it is enough to show the compatibility with \eqref{Eq2.11}-\eqref{Eq2.19}. Let $q_{Max}^R$ be $\max\{q_i\mid w+1\leq i\leq l\}$. Then, we can write down
\begin{align*}
\Phi^R(\widetilde{H}_{i,1})&=-\dfrac{i}{2}\hbar(W^{(1)}_{q_{Max}^R-q_1+i,q_{Max}^R-q_1+i}-W^{(1)}_{q_{Max}^R-q_1+i+1,q_{Max}^R-q_1+i+1})+U_i-U_{i+1},\\
\Phi^R(\widetilde{H}_{0,1})&=\ve W^{(1)}_{q_{Max}^R,q_{Max}^R}-\sum_{y=1}^{q^R_{Max}-q_l}W^{(1)}_{y,y}+\hbar(\sum_{v=w+1}^{l-1}\alpha_v)-\dfrac{\hbar}{2}(\sum_{u=w+1}^l\alpha_u)^2+U_{q_1}-U_{1},
\end{align*}
where 
\begin{align*}
U_i&=-\hbar W^{(2)}_{q_{Max}^R-q_1+i,q_{Max}^R-q_1+i}t-\dfrac{\hbar}{2}((W^{(1)}_{q_{Max}^R-q_1+i,q_{Max}^R-q_{1}+i})^2\\
&\quad+\hbar\displaystyle\sum_{s \geq 0}  \limits\displaystyle\sum_{u=1}^{i}\limits W^{(1)}_{q_{Max}^R-q_1+i,q_{Max}^R-q_1+u}t^{-s}W^{(1)}_{q_{Max}^R-q_1+u,q_{Max}^R-q_1+i}t^s\\
&\quad+\hbar\displaystyle\sum_{s \geq 0} \limits\displaystyle\sum_{u=i+1}^{q_1}\limits W^{(1)}_{q_{Max}^R-q_1+i,q_{Max}^R-q_1+u}t^{-s-1} W^{(1)}_{q_{Max}^R-q_1+u,q_{Max}^R-q_1+i}t^{s+1}.
\end{align*}
By Lemma~\ref{OPE}, for $i\neq j$ and $v\leq q_{Max}^R-q_1$, we obtain
\begin{align*}
&\quad[U_i,W^{(1)}_{q_{Max}^R-q_1+j,v}t^{-x}]-\hbar [W^{(2)}_{q_{Max}^R-q_1+i,q_{Max}^R-q_1+i}t,W^{(1)}_{q_{Max}^R-q_1+j,v}t^{-x}]\\
&=-\delta(j<i)\hbar\sum_{s\geq0}\limits (W^{(1)}_{q_{Max}^R-q_1+j,q_{Max}^R-q_1+i}t^{-s-1}W^{(1)}_{q_{Max}^R-q_1+i,v}t^{s-x+1})\\
&\quad-\delta(j>i)\hbar\sum_{s\geq0}\limits (W^{(1)}_{q_{Max}^R-q_1+j,q_{Max}^R-q_1+i}t^{-s}W^{(1)}_{q_{Max}^R-q_1+i,v}t^{s-x})+\delta(j>i)W^{(1)}_{q_{Max}^R-q_1+j,v}t^{-x}.
\end{align*}

By a direct computation, we have proved the compatibility with \eqref{Eq2.11}-\eqref{Eq2.19}.
\end{proof}
Similarly, for $q_1<q_w,q_{w+1}$, we can construct a homomorphism
\begin{align*}
\Phi_1&\colon Y^{\min\{q_w,q_{w+1}\},L}_{\hbar,\ve}(\widehat{\mathfrak{sl}}(q_1))\to\mathcal{U}(\mathcal{W}^{k+N_2}(\mathfrak{gl}(N_1),f))
\end{align*}
by the same formula as $\Phi$.
\begin{Theorem}
We can construct a homomorphism
\begin{align*}
\Delta^{m}\colon  Y_{\hbar,\ve}(\widehat{\mathfrak{sl}}(m))\to Y^{m+n,L}_{\hbar,\ve}(\widehat{\mathfrak{sl}}(m))\widehat{\otimes}Y^{m+n,R}_{\hbar,\ve}(\widehat{\mathfrak{sl}}(p_1))
\end{align*}
determined by
\begin{align*}
\Delta^{m}(y)&=1\otimes y+y\otimes 1\text{ for }y\in\widehat{\mathfrak{sl}}(p_x),\\
\Delta^{m}(H_{i,1})&=\begin{cases}
H_{i,1}\otimes1+1\otimes H_{i,1}+A^+_i\\
\quad+\hbar\displaystyle\sum_{\substack{s\in\mathbb{Z}\\1\leq u\leq n}}\limits E_{i+n,u}t^{-s}\otimes E_{u,i+n}t^s-\hbar\displaystyle\sum_{\substack{s\in\mathbb{Z}\\1\leq u\leq n}}\limits E_{i+1+n,u}t^{-s}\otimes E_{u,i+1+n}t^s\text{ if }i\neq0,\\
H_{0,1}\otimes1+1\otimes H_{0,1}+A^+_0\\
\quad+\hbar\displaystyle\sum_{\substack{s\in\mathbb{Z}\\1\leq u\leq n}}\limits E_{m+n,u}t^{-s}\otimes E_{u,m+n}t^s-\hbar\displaystyle\sum_{\substack{s\in\mathbb{Z}\\1\leq u\leq n}}\limits E_{n+1,u}t^{-s}\otimes E_{u,n+1}t^s\text{ if }i=0.
\end{cases}
\end{align*}
\end{Theorem}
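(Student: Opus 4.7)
The plan is to verify that the formulas defining $\Delta^{m}$ on generators extend to an algebra homomorphism by checking compatibility with the defining relations (2.1)--(2.10) of $Y_{\hbar,\ve}(\widehat{\mathfrak{sl}}(m))$. Since this Yangian is generated by $\{X^{\pm}_{i,0}\}_{0\leq i\leq m-1}$ together with $\{H_{j,1}\}_{1\leq j\leq m-1}$, it is enough to check the relations after applying $\Delta^{m}$ to these generators. Conceptually, the formulas express $\Delta^{m}$ as the GNW coproduct $\Delta^{+}$ plus a \emph{crossing} correction
\[
C_i=\hbar\sum_{s\in\mathbb{Z}}\sum_{u=1}^{n}E_{i+n,u}t^{-s}\otimes E_{u,i+n}t^{s}-\hbar\sum_{s\in\mathbb{Z}}\sum_{u=1}^{n}E_{i+1+n,u}t^{-s}\otimes E_{u,i+1+n}t^{s},
\]
which records the extra data carried by $Y^{m+n,L}_{\hbar,\ve}\widehat{\otimes}Y^{m+n,R}_{\hbar,\ve}$ beyond the standard tensor product.

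Relations (2.1), (2.2), (2.4), (2.10) involve only Chevalley generators. These are mapped primitively to the elements of $\widehat{\mathfrak{sl}}(m+n)$ inside each tensor factor via the shift $i\mapsto i+n$. Since this shift preserves the Kac--Moody relations and primitive coproducts on any Lie algebra are always compatible with brackets, these relations hold automatically.

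The substantive work lies in (2.3) and (2.5)--(2.9). I would expand
\[
[\Delta^{m}(\widetilde{H}_{i,1}),\Delta^{m}(X^{\pm}_{j,0})]=[\Delta^{+}(\widetilde{H}_{i,1}),\Delta^{+}(X^{\pm}_{j,0})]+[C_i,X^{\pm}_{j,0}\otimes 1+1\otimes X^{\pm}_{j,0}],
\]
where the first summand gives the required right-hand side by GNW's Theorem. The extra contribution is the commutator of $C_i$ with $X^{\pm}_{j,0}$, computed purely via $\widehat{\mathfrak{sl}}(m+n)$ brackets, producing explicit sums of two-tensors indexed by $E_{i+n,n+j'}t^{w}$ and $E_{n+j',i+n}t^{-w}$. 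On the other side of the Yangian relations, one must also track the commutators of $\widetilde{H}_{i,1}\otimes 1+1\otimes\widetilde{H}_{i,1}$ against the additional terms in the image of $X^{\pm}_{j,0}$ that sit in $\widehat{\mathfrak{sl}}(m+n)$: this is exactly where the defining relations (2.11)--(2.20) of $Y^{m+n,L}_{\hbar,\ve}$ and $Y^{m+n,R}_{\hbar,\ve}$ enter. Those relations are engineered precisely so that the action of $\widetilde{H}_{i,1}$ on $E_{v,n+j}t^{x}$ and $E_{n+j,v}t^{-x}$ yields the expressions $a^{m,m+n}_{\bullet,\bullet}$ and $b^{m,m+n}_{\bullet,\bullet}$ needed to cancel the unwanted terms produced by $[C_i,X^{\pm}_{j,0}]$.

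The main obstacle is the boundary/affine relations (2.6), (2.7), (2.9) at the node $i=0$, where the scalar $\ve+\tfrac{n}{2}\hbar$ appears. The $\ve$-shift built into (2.19)--(2.20) of $Y^{m+n,L/R}_{\hbar,\ve}$ is tuned exactly to reproduce this scalar, but verifying the match requires careful attention to how the two-sided sum $\sum_{s\in\mathbb{Z}}$ defining $C_i$ splits as $\sum_{s\geq 0}+\sum_{s<0}$ between the two tensor factors and how the resulting $t$-shifts interact with the central element $\widetilde{c}$. Once this combinatorial bookkeeping is completed and the index ranges of $A^{+}_i$ are reconciled with the shifted indices $i+n$, all defining relations of $Y_{\hbar,\ve}(\widehat{\mathfrak{sl}}(m))$ follow, giving the desired homomorphism.
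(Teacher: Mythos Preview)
Your proposal has a genuine gap: you never address the relation $[H_{i,1},H_{j,1}]=0$, which is the case $r=s=1$ of (2.1). You write that ``(2.1), (2.2), (2.4), (2.10) involve only Chevalley generators,'' but this is only true for (2.1) when $r=s=0$; the instance $r=s=1$ is precisely the relation the paper singles out as the nontrivial one. Your strategy of comparing $[\Delta^{m}(\widetilde{H}_{i,1}),\Delta^{m}(X^{\pm}_{j,0})]$ against the GNW coproduct and absorbing the discrepancy via (2.11)--(2.20) is fine for the mixed relations (2.3), (2.5)--(2.9) --- indeed the paper dismisses all of those as ``trivial'' --- but it does not touch $[H_{i,1},H_{j,1}]=0$, and a direct verification of that commutator would be a substantial computation you have not outlined.

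The paper's argument for $[H_{i,1},H_{j,1}]=0$ is different in kind from what you propose. Rather than computing the commutator directly, it observes the factorization
\[
(\Psi^{m,m+n}_{L}\otimes\Psi^{m,m+n}_{R})\circ\Delta^{m}=\Delta^{+}\circ\Psi^{m,m+n},
\]
so that $\Psi^{m,m+n}_{L}\otimes\Psi^{m,m+n}_{R}$ applied to $[\Delta^{m}(H_{i,1}),\Delta^{m}(H_{j,1})]$ vanishes, since $\Delta^{+}$ and $\Psi^{m,m+n}$ are already known to be homomorphisms. One then notes that, thanks to the imposed relations (2.11)--(2.20), the commutator $[\Delta^{m}(H_{i,1}),\Delta^{m}(H_{j,1})]$ can be rewritten as an element of the completed tensor square of $U(\widehat{\mathfrak{sl}}(m+n))$; the PBW theorem for the affine Yangian then forces this element to be zero. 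You would need either this injectivity argument or an equivalent one to close the gap.
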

\begin{proof}
The compatibility with the defining relations of the affine Yangian except of $[H_{i,1},H_{j,1}]=0$ is trivial. We only need to show the compatibility with $[H_{i,1},H_{j,1}]=0$. Since we obtain
\begin{align*}
(\Psi^{m,m+n}_L\otimes\Psi^{m,m+n}_R)\circ\Delta^{m}=\Delta^+\circ\Psi^{m,m+n}.
\end{align*}
by a direct computation, we have
\begin{equation}
\Psi^{m,m+n}_L\otimes\Psi^{m,m+n}_R([\Delta^{m}(H_{i,1}),\Delta^{m}(H_{j,1})])=\Delta^+([\Psi^{m,m+n}(H_{i,1}),\Psi^{m,m+n}(H_{j,1})])=0.\label{ketu}
\end{equation}
By using \eqref{Eq2.11}-\eqref{Eq2.19}, we can write down $[\Delta^{m}(H_{i,1}),\Delta^{m}(H_{j,1})]$ as an element of the completion of $\bigotimes^2U(\widehat{\mathfrak{sl}}(m+n))$. By the PBW theorem of the affine Yangian, the relation \eqref{ketu} induces $[\Delta^{m}(H_{i,1}),\Delta^{m}(H_{j,1})]=0$.
\end{proof}

For a complex number $a\in\mathbb{C}$, we set a homomorphism called the shift operator of the affine Yangian:
\begin{equation*}
\tau_a\colon Y_{\hbar,\ve}(\widehat{\mathfrak{sl}}(n))\to Y_{\hbar,\ve}(\widehat{\mathfrak{sl}}(n))
\end{equation*}
determined by $X^\pm_{i,0}\mapsto X^\pm_{i,0}$ and $H_{i,1}\mapsto H_{i,1}+a H_{i,0}$. Then, by a direct computation, we obtain the compatibility with the coproduct for the affine Yangian and the parabolic presentation for a $W$-algebra.
\begin{Theorem}
We obtain the following relations:
\begin{gather*}
((\Phi_1\circ\tau_{-\gamma_{w}\hbar}\circ\Psi^{q_l,\min(q_1,q_w)}))\otimes\Phi_2)\circ\Delta^{q_l}=\Delta_W\circ\Phi\text{ if }q_1\geq q_l,\\
(\Phi_1\circ\tau_{-\gamma_{w}\hbar})\otimes(\Phi_2\circ\Psi^{q_1,\min(q_l,q_{w+1})})\circ\Delta^{q_1}=\Delta_W\circ\Phi\text{ if }q_1<q_l.
\end{gather*}
\end{Theorem}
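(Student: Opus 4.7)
The plan is to apply the injective composite Miura map $\widetilde{\mu}_1\otimes\widetilde{\mu}_2$ to both sides of the claimed identity, turning it into an equality of homomorphisms into $\widehat{\bigotimes}_{1\leq s\leq l} U(\widehat{\mathfrak{gl}}(q_s))$, and then use Theorem~\ref{hojo} to expand each factor. Since $\mu=(\mu_1\otimes\mu_2)\circ\Delta_W$ lifts to $\widetilde{\mu}=(\widetilde{\mu}_1\otimes\widetilde{\mu}_2)\circ\Delta_W$, the right hand side $\Delta_W\circ\Phi$ becomes, after applying $\widetilde{\mu}_1\otimes\widetilde{\mu}_2$, just $\widetilde{\mu}\circ\Phi$, which by Theorem~\ref{hojo} equals
\begin{equation*}
\bigotimes_{1\leq s\leq l}\ev^{q_s,(\gamma_s-\frac{q_s-q_{Min}}{2})\hbar}_{\hbar,\ve-(q_s-q_{Min})\hbar}\circ\Sigma\circ\Delta^l.
\end{equation*}

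Since $\Phi_1$ and $\Phi_2$ are defined by the same recipe as $\Phi$, applied respectively to the sub-partition $(q_1,\ldots,q_w)$ of $N_1$ at level $k+N_2$ and to $(q_{w+1},\ldots,q_l)$ of $N_2$ at level $k+N_1$, Theorem~\ref{hojo} also applies to them and yields analogous factorizations $\widetilde{\mu}_a\circ\Phi_a=\bigotimes_s\ev^{q_s,\cdot}\circ\Sigma_a\circ\Delta^{w_a}$ for $a=1,2$. Substituting these on the left hand side, the desired identity reduces to an equality of homomorphisms out of $Y_{\hbar,\ve}(\widehat{\mathfrak{sl}}(q_l))$ (using $q_{Min}=q_l$ when $q_1\geq q_l$) into $\widehat{\bigotimes}_{1\leq s\leq l}U(\widehat{\mathfrak{gl}}(q_s))$, involving only evaluation maps, the edge-contractions $\Psi$, the coproducts $\Delta^\pm$, the shift $\tau_{-\gamma_w\hbar}$, and the assembly operators $\Sigma,\Sigma_a$. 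As all these are algebra homomorphisms, it suffices to verify the identity on the generators $X^\pm_{j,0}$ and $H_{i,1}$.

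On $X^\pm_{j,0}$ the identity is immediate, since both sides simply distribute the Chevalley generator of $\widehat{\mathfrak{sl}}(q_l)$ over the $l$ tensor slots via the appropriate inclusions of root vectors. The substantive check is on $H_{i,1}$, where both sides expand into an iterated-coproduct calculation analogous in spirit to the one carried out in the proof of Theorem~\ref{hojo}. What must be established is essentially
\begin{equation*}
\Sigma\circ\Delta^l = (\Sigma_1\circ\Delta^{w}\otimes\Sigma_2\circ\Delta^{l-w})\circ(\tau_{-\gamma_w\hbar}\circ\Psi^{q_l,\min(q_1,q_w)}\otimes\id)\circ\Delta^{q_l},
\end{equation*}
which will follow from the coassociativity of $\Delta^\pm$ combined with the compatibility of the edge-contraction $\Psi^{m,m+n}$ with the coproduct established in Section 5.

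The main obstacle is the careful bookkeeping of evaluation parameters and coproduct signs. The value $\gamma_s$ attached to $\widetilde{\mu}\circ\Phi$ differs from the corresponding $\gamma_s$ attached to $\widetilde{\mu}_1\circ\Phi_1$ by exactly $\gamma_w=\sum_{u=w+1}^l\alpha_u$; this discrepancy is absorbed by $\tau_{-\gamma_w\hbar}$ and reflects precisely the level shift $k\to k+N_2$ induced by the parabolic induction. One must also track which of $\Delta^+$ or $\Delta^-$ appears in each intermediate factor of $\Delta^l$, as dictated by the order relation between $\sigma(s)$ and $\sigma(s+1)$, and verify that the sub-coproducts $\Delta^w,\Delta^{l-w}$ assemble consistently with the outer $\Delta^{q_l}$. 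The case $q_1<q_l$ is entirely analogous, with the roles of the left and right tensor factors interchanged and $\Psi^{q_1,\min(q_l,q_{w+1})}$ applied to the second factor instead.
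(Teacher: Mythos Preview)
Your strategy---post-compose both sides with the injective $\widetilde{\mu}_1\otimes\widetilde{\mu}_2$, invoke $\widetilde{\mu}=(\widetilde{\mu}_1\otimes\widetilde{\mu}_2)\circ\Delta_W$, and then check equality on the generators $X^\pm_{j,0}$ and $H_{i,1}$ using Theorem~\ref{hojo}---is exactly the route the paper takes; the paper records this only as ``by a direct computation,'' but the commented-out argument earlier in Section~9 confirms this is the intended method.

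Two points in your sketch deserve tightening. First, the displayed identity you reduce to,
\[
\Sigma\circ\Delta^l = (\Sigma_1\circ\Delta^{w}\otimes\Sigma_2\circ\Delta^{l-w})\circ(\tau_{-\gamma_w\hbar}\circ\Psi^{q_l,\min(q_1,q_w)}\otimes\id)\circ\Delta^{q_l},
\]
cannot hold at the Yangian level as written: the shift $\tau_{-\gamma_w\hbar}$ is there precisely to align evaluation parameters, and its effect only becomes visible \emph{after} the evaluation maps are applied. The correct reduction is to an equality in $\widehat{\bigotimes}_{1\leq s\leq l}U(\widehat{\mathfrak{gl}}(q_s))$ with the $\ev^{q_s,\cdot}$ still present on both sides. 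Second, your appeal to ``coassociativity of $\Delta^\pm$ combined with compatibility of $\Psi^{m,m+n}$'' does not account for the genuinely new terms in the extended coproduct $\Delta^{q_l}$, namely the sums $\hbar\sum_{s,u}E_{i+n,u}t^{-s}\otimes E_{u,i+n}t^{s}$ with $1\leq u\leq n$ that lie outside the image of the ordinary Yangian. These terms must be matched by hand against the pieces $\sum_{u\leq q_{Max}-q_{Min}}W^{(1)}_{i,u}\otimes W^{(1)}_{u,j}$ appearing in $\Delta_W(W^{(2)}_{i,j})$, and this is where the extended algebras $Y^{p,L/R}$ and Lemma~\ref{OPE} enter. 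Once you carry out that explicit comparison on $H_{i,1}$, the proof is complete.
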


\section*{Funding}
This work was supported by JSPS Overseas Research Fellowships, Grant Number JP2360303. 
\section*{Data Availability}
The authors confirm that the data supporting the findings of this study are available within the article and its supplementary materials.
\section*{Conflicts of interests/Competing interests}
The authors have no competing interests to declare that are relevant to the content of this article.

\end{document}